\def\RR{\mathbb{R}}
\def\CC{\mathbb{C}}
\def\NN{\mathbb{N}}
\newcommand{\al}{{\alpha}}
\newcommand{\la}{{\lambda}}
\newcommand{\f}{{\varphi}}
\newcommand{\cV}{{\mathcal{V}}}
\newcommand{\cX}{{\mathcal{X}}}
\newcommand{\fdot}{\,\cdot\,}
\def\Ddots{\mathinner{\mkern1mu\raise\p@
\vbox{\kern7\p@\hbox{.}}\mkern2mu
\raise4\p@\hbox{.}\mkern2mu\raise7\p@\hbox{.}\mkern1mu}}
\newcommand{\cH}{\mathcal{H}}
\newcommand{\cC}{\mathcal{C}}
\newcommand{\cB}{\mathcal{B}}
\newcommand{\cF}{\mathcal{F}}
\newcommand{\cA}{\mathcal{A}}
\newcommand{\cD}{\mathcal{D}}
\newcommand{\cM}{\mathcal{M}}
\DeclareMathOperator{\spa}{span}
\newcommand{\ci}[1]{_{ {}_{\scriptstyle #1}}}
\newcommand{\ti}[1]{_{\scriptstyle \text{\rm #1}}}
\chardef\mathlig@atcode\count255
\def\actively#1#2{\begingroup\uccode`\~=`#2\relax\uppercase{\endgroup#1~}}
\def\mathlig@gobble{\afterassignment\mathlig@next@cmd\let\mathlig@next= }
\def\mathlig@delim{\mathlig@delim}
\def\mathlig@defcs#1{\expandafter\def\csname#1\endcsname}
\def\mathlig@let@cs#1#2{\expandafter\let\expandafter#1\csname#2\endcsname}
\def\mathlig@appendcs#1#2{\expandafter\edef\csname#1\endcsname{\csname#1\endcsname#2}}
\def\mathlig#1#2{\mathlig@checklig#1\mathlig@end\mathlig@defcs{mathlig@back@#1}{#2}\ignorespaces}
\def\mathlig@checklig#1#2\mathlig@end{%
 \expandafter\ifx\csname mathlig@forw@#1\endcsname\relax
 \expandafter\mathchardef\csname mathlig@back@#1\endcsname=\mathcode`#1%
 \mathcode`#1"8000\actively\def#1{\csname mathlig@look@#1\endcsname}%
 \mathlig@dolig#1\mathlig@delim
\fi
\mathlig@checksuffix#1#2\mathlig@end
}
\def\mathlig@checksuffix#1#2\mathlig@end{%
\ifx\mathlig@delim#2\mathlig@delim\relax\else\mathlig@checksuffix@{#1}#2\mathlig@end\fi
}
\def\mathlig@checksuffix@#1#2#3\mathlig@end{%
\expandafter\ifx\csname mathlig@forw@#1#2\endcsname\relax\mathlig@dosuffix{#1}{#2}\fi
\mathlig@checksuffix{#1#2}#3\mathlig@end
}
\def\mathlig@dosuffix#1#2{%
\mathlig@appendcs{mathlig@toks@#1}{#2}%
\mathlig@dolig{#1}{#2}\mathlig@delim
}
\def\mathlig@dolig#1#2\mathlig@delim{%
 \mathlig@defcs{mathlig@look@#1#2}{%
 \mathlig@let@cs\mathlig@next{mathlig@forw@#1#2}\futurelet\mathlig@next@tok\mathlig@next}%
 \mathlig@defcs{mathlig@forw@#1#2}{%
  \mathlig@let@cs\mathlig@next{mathlig@back@#1#2}%
  \mathlig@let@cs\checker{mathlig@chck@#1#2}%
  \mathlig@let@cs\mathligtoks{mathlig@toks@#1#2}%
  \expandafter\ifx\expandafter\mathlig@delim\mathligtoks\mathlig@delim\relax\else
  \expandafter\checker\mathligtoks\mathlig@delim\fi
  \mathlig@next
 }%
 \mathlig@defcs{mathlig@toks@#1#2}{}%
 \mathlig@defcs{mathlig@chck@#1#2}##1##2\mathlig@delim{%
  \ifx\mathlig@next@tok##1%
   \mathlig@let@cs\mathlig@next@cmd{mathlig@look@#1#2##1}\let\mathlig@next\mathlig@gobble
  \fi
  \ifx\mathlig@delim##2\mathlig@delim\relax\else
   \csname mathlig@chck@#1#2\endcsname##2\mathlig@delim
  \fi
 }%
%
 \ifx\mathlig@delim#2\mathlig@delim\else
  \mathlig@defcs{mathlig@back@#1#2}{\csname mathlig@back@#1\endcsname #2}%
 \fi
}%
\mathchardef\ordinarycolon\mathcode`\:
\def\vcentcolon{\mathrel{\mathop\ordinarycolon}}
\numberwithin{equation}{section}
\theoremstyle{plain}
\newtheorem{theo}{Theorem}[section]
\newtheorem{cor}[theo]{Corollary}
\newtheorem{lem}[theo]{Lemma}
\newtheorem{prop}[theo]{Proposition}
\theoremstyle{definition}
\newtheorem{defn}[theo]{Definition}
\newtheorem*{theorem*}{Theorem}
\theoremstyle{remark}
\newtheorem*{ex*}{Example}
\theoremstyle{remark}
\newtheorem*{exs*}{Examples}
\theoremstyle{remark}
\newtheorem*{rems*}{Remarks}
\theoremstyle{remark}
\newtheorem*{rem*}{Remark}
\newtheorem{rem}[theo]{Remark}
\theoremstyle{definition}
\newtheorem*{idea*}{Idea}
\title[Powers of the Jacobi Differential Operator]{Properties and Decompositions of Domains for Powers of the Jacobi Differential Operator}
\author{Dale~Frymark}
\address{Department of Mathematics, Stockholm University, Kr\"aftriket 6, 106 91 Stockholm, Sweden.}
\email{dale@math.su.se}
\author{Constanze~Liaw}
\address{Department of Mathematical Sciences, University of Delware, 501 Ewing Hall, Newark, DE  19716, USA; and 
CASPER, Baylor University, One Bear Place \#97328,      
 Waco, TX  76798, USA.}
\email{liaw@udel.edu}
\thanks{The work of Constanze Liaw was supported by the National Science Foundation under the grant DMS-1802682.}
\keywords{Self-Adjoint Extension Theory, Sturm--Liouville Operators, Left-Definite Theory, Boundary Conditions, Maximal Domain, Minimal Domain}
 \subjclass[2010]{47E05, 47B25, 34L10, 34B24, 34B20}
\begin{document}

\begin{abstract}

We set out to build a framework for self-adjoint extension theory for powers of the Jacobi differential operator that does not make use of classical deficiency elements. Instead, we rely on simpler functions that capture the impact of these elements on extensions but are defined by boundary asymptotics. This new perspective makes calculations much more accessible and allows for a more nuanced analysis of the associated domains.

The maximal domain for $n$-th composition of the Jacobi operator is characterized in terms of a smoothness condition for each derivative, and the endpoint behavior of functions in the underlying Hilbert space can then be classified, for $j\in\NN_0$, by $(1-x)^j$, $(1+x)^j$, $(1-x)^{-\al+j}$ and $(1+x)^{\beta+j}$. Most of these behaviors can only occur when functions are in the associated minimal domain, and this leads to a formulation of the defect spaces with a convenient basis. Self-adjoint extensions, including the important left-definite domain, are then given in terms of the new basis functions for the defect spaces using GKN theory. Comments are made for the Laguerre operator as well.
\end{abstract}

\maketitle

\setcounter{tocdepth}{1}
\tableofcontents

\section{Introduction}\label{s-Intro}

The study of self-adjoint extensions of Sturm--Liouville operators has a long history (i.e.~\cite{AK, G, W1, W2, W}), but very little is known about the powers of these operators, and much less about more general differential operators. For instance, the abstract format of boundary conditions that yield self-adjoint extensions of general differential operators has only recently been described, see e.g.~ \cite{ASZ1, ASZ2, ASZ3, AZ, AZBook}. Other progress was made towards the study of self-adjoint extensions of powers of differential operators when Littlejohn and Wellman characterized the left-definite domains \cite{LW02} associated with self-adjoint differential operators that are bounded below. Left-definite domains, related to the study of left-definite Sturm--Liouville problems, are self-adjoint extensions that can be characterized via powers of the initial operator, see Subsection \ref{ss-leftdef} for a detailed discussion. The broader study of these spaces dates back to at least 1973, when Pleijel investigated the Legendre polynomials \cite{P1, P2} and gave a description comparable to that of Proposition \ref{t-legendremax}.

In order to compensate for studying higher order equations, we are primarily concerned with the powers of Sturm--Liouville operators that possess a complete set of orthogonal eigenfunctions. The Bochner classification \cite{B} tells us that, up to a complex linear change of variable, the only Sturm--Liouville operators with polynomial eigenfunctions are those of Jacobi, Hermite, Laguerre and Bessel. Of these, the Jacobi differential operator requires the most boundary conditions to be self-adjoint, so our analysis is centered around this operator. The Legendre expression is a special case of the Jacobi expression that has an immense amount of literature, so it is discussed in Subsection \ref{ss-legendreexample}. The new framework built to discuss self-adjoint extensions of these operators should also reduce to cover the Laguerre expression as well (see Remark \ref{r-laguerre}). Hermite and Bessel operators are already essentially self-adjoint.

The left-definite domains associated with powers of the Jacobi operator are particularly useful, as they contain all of the Jacobi polynomials (Theorem \ref{t-leftdefortho}) and offer a starting point for the study of other self-adjoint extensions. Unfortunately, while these left-definite spaces have descriptions, little is known about which boundary conditions should be imposed to yield them from their maximal domains. A previous manuscript of the authors \cite{FFL} gives examples of left-definite domains for low powers of the Legendre operator, but achieves limited results for general powers. This was mostly due to difficulties handling the sesquilinear forms generated by the Green's formula, see equation \eqref{e-njacobisesqui}.

In the current paper, a different approach is taken. The Jacobi polynomials, for $\al,\beta\in[0,1)$, can be decomposed as
\begin{align*}
    P_m^{(\al,\beta)}(x)=\sum_{j=0}^m a_j(1-x)^{m-j}\cdot (1+x)^j,
\end{align*}
and are dense in $L^2_{\al,\beta}(-1,1):=L^2[(-1,1),(1-x)^{\al}(1+x)^{\beta}dx]$. The functions $(1-x)^j$ and $(1+x)^j$, $j\in\NN_0$, describe the behavior of these polynomials near the endpoints. Likewise, the second linear independent solution to the Jacobi differential equation can be described by the functions $(1-x)^{-\al+j}$ and $(1+x)^{-\beta+j}$ near the endpoints; Remark \ref{r-choices} explains this in more detail. These four classes of functions are the key to describing the maximal and minimal domains associated with powers of the Jacobi operator. Likewise, the defect spaces can be written with these functions as a basis. This allows the language of self-adjoint extension theory, Glazman--Krein--Naimark (GKN) theory, to be translated so that all self-adjoint extensions of powers of the Jacobi operator can be described via these functions (see Corollary \ref{t-finallyamatrix}).

GKN theory (see Theorem \ref{t-gknmatrix}) is usually implemented by finding basis elements of the defect spaces and combining them in some way determined by a unitary matrix. These basis elements, called deficiency elements in the literature, are solutions to 
\begin{align*}
    \ell[f](x)=\pm if(x),
\end{align*}
where $\ell[\cdot]$ is the symmetric operator of interest. Modern uses of this method are found, for instance, in \cite{AK, GT}. Unfortunately, these deficiency elements are difficult to identify for singular Sturm--Liouville expressions, and even more difficult to work with. The deficiency elements for the Jacobi expression are Jacobi functions of the first and second kind with complex indices, and therefore computing inner products and sesquilinear forms is practically unfeasible. This inability to handle explicit solutions also prohibits the use of other general results, e.g.~those in \cite{ASZ2, ASZ3}. These obstacles were the main motivation for studying the use of orthogonal polynomials as GKN boundary conditions in \cite{FFL}. The functions determined by endpoint behavior above are found to express the contribution these deficiency elements have on self-adjoint extensions.

The improved framework for self-adjoint extension theory allows, in particular, for improved descriptions of the left-definite domain. Several sets of boundary conditions, both in GKN and non-GKN formats, are discussed and proven to be equal in Section \ref{s-equivalence}. These descriptions generalize and confirm a conjecture from \cite{FFL} (and a related one from \cite{LWOG}) that applies to all left-definite domains of Sturm--Liouville operators with a complete set of orthogonal polynomials. While we do not consider the spectral properties of self-adjoint extensions, forthcoming work explores the impact of the new framework.


\subsection{Outline}

Section \ref{s-bg} introduces the background necessary to tackle self-adjoint extensions of higher-order differential equations. Left-definite theory, in Subsection \ref{ss-leftdef}, creates a continuum of Hilbert spaces from a differential operator that are given via composition, and indicates when a self-adjoint extension possesses a complete set of orthogonal eigenfunctions. The boundary conditions for these domains are formulated via GKN theory, and require the introduction of several subspaces: the maximal domain, defect spaces, and minimal domain.

Section \ref{s-mindomains} analyzes the domains of both the Legendre and Jacobi differential operators. In this scenario, decompositions of the maximal and minimal domains form the base case for how our methods will work with compositions. However, we present the $n=1$ (operator that is not composed) case from a different perspective that is crucial to our analysis. The definition of the left-definite operator is easily verified as a byproduct of these domain decompositions. 

Section \ref{s-powers} hosts the main results of the paper. Namely, the maximal domains associated with compositions of the Jacobi operator are each characterized in terms of smoothness conditions that originate from the sesquilinear form. This origin makes calculations within the maximal domain streamlined, and as a result the minimal domain is found to possess all functions of the type $(1-x)^j$, $(1-x)^{-\al+j}$, $(1+x)^j$ and $(1+x)^{-\beta+j}$ for $j\geq n$. The span of the finitely-many leftover functions, when $j\in\NN_0$ and $j<n$, are proven to be equal to the defect spaces in Subsection \ref{ss-nextensions}, forming a basis that is much easier to use than the deficiency elements present in classical self-adjoint extension theory. Remark \ref{r-laguerre} discusses a possible reduction of the method for powers of the Laguerre operator. 

Section \ref{s-equivalence} studies the ramifications of the results in Section \ref{s-powers}  on left-definite theory. A conjecture from \cite{FFL} concerning how left-definite spaces can be written as GKN boundary conditions is  generalized and proven for powers of Jacobi operators. This result allows left-definite spaces to be described in several different forms, with and without GKN boundary conditions, giving further insight into the structure of the domains.

\subsection{Notation}

We use $\ell$ to denote differential expressions (on a separable Hilbert space $\cH$), although we mostly work with general Sturm--Liouville expressions in Lagrangian symmetric form. Sets and spaces are generally denoted with ``mathcal"; the Hilbert space $\cH$, the minimal domain $\cD\ti{min}$, the defect spaces $\cD_+$ and $\cD_-$, etc.

The notation $\{\ell,\cX\}$ will refer to an operator that acts via the expression $\ell$ on the domain $\cX$. Since we work with unbounded operators, they are defined on dense subspaces $\cX\subsetneq \cH$. The maximal domain, the largest subset of $\cH$ with $\ell (\cD\ti{max})\subset \cH$, is denoted by $\cD\ti{max}$,  or $\cD\ti{max}(\ell)$ to emphasize the expression. Boldface letters are used for operators, with the maximal and minimal operators abbreviated as ${\bf L}\ti{max}=\{\ell,\cD\ti{max}\}$ and ${\bf L}\ti{min}=\{\ell,\cD\ti{min}\}$. In this context, ${\bf L}=\{\ell, \cD_{\bf L}\}$ is used to denote self-adjoint operators. The domain of a general operator ${\bf  A}$ is referred to by $\cD({\bf  A})$.

It is a common goal of many results to show that evaluation of a sesquilinear form is finite. These evaluations involve limits, and finite constants are thus often removed from the calculations as they are unimportant to convergence. We use the notation $\approx$, instead of =, to denote this removal of constants when rearranging terms.

\section{Classical Self-Adjoint Extension Theory and Left-Definite Theory}\label{s-bg}

Consider the classical Sturm--Liouville differential equation
\begin{align}\label{d-sturmdif}
\dfrac{d}{dx}\left[p(x)\dfrac{dy}{dx}\right]+q(x)y=-\lambda w(x)y,
\end{align}
where $y$ is a function of the independent variable $x$, $p(x),w(x)>0$ a.e.~on $(a,b)$ and $q(x)$ real-valued a.e.~on $(a,b)$. 
Furthermore, $1/p(x),q(x),w(x)\in L^1\ti{loc}[(a,b),dx]$. Additional details about Sturm--Liouville theory can be found, for instance, in \cite{AG, BEZ, E, GZ, Z}.
This differential expression can be viewed as a linear operator, mapping a function $f$ to the function $\ell[f]$ via
\begin{align}\label{d-sturmop}
\ell[f](x):=-\dfrac{1}{w(x)}\left(\dfrac{d}{dx}\left[p(x)\dfrac{df}{dx}(x)\right]+q(x)f(x)\right).
\end{align}
This unbounded operator acts on the Hilbert space $L^2[(a,b),w]$, endowed with the inner product 
$
\langle f,g\rangle:=\int_a^b f(x)\overline{g(x)}w(x)dx.
$
In this setting, the eigenvalue problem $\ell[f](x)=\lambda f(x)$ can be considered. The expression $\ell[\fdot]$ defined in equation \eqref{d-sturmop} has been well-studied, see \cite{I} for an in-depth discussion of its relation to orthogonal polynomials. However, the operator $\{\ell,L^2[(a,b),w]\}$ is not self-adjoint a priori. Additional boundary conditions are required to ensure this property.

Furthermore, the operator $\ell^n[\fdot]$ is defined as the operator $\ell[\fdot]$ composed with itself $n$ times, creating a differential operator of order $2n$. Every formally symmetric differential expression $\ell^n[\fdot]$ of order $2n$ with coefficients $a_k:(a,b)\to\RR$ and $a_k\in C^k(a,b)$, for $k=0,1,\dots,n$ and $n\in\NN$, has the {\em Lagrangian symmetric form} 
\begin{align}\label{e-lagrangian}
\ell^n[f](x)=\sum_{j=1}^n(-1)^j(a_j(x)f^{(j)}(x))^{(j)}, \text{ } x\in(a,b).
\end{align}
Further details can be found in \cite{DS, ELT, LWOG}.

The classical differential expressions of Jacobi, Laguerre and Hermite are all semi-bounded and admit such a representation. Semi-boundedness is defined as the existence of a constant $k\in\RR$ such that for all $x$ in the domain of the operator ${\bf A}$ the following inequality holds:
$$\langle {\bf A}x,x\rangle\geq k\langle x,x\rangle.$$
This additional property, combined with self-adjointness, allows for a continuum of nested Hilbert spaces to be defined within $L^2[(a,b),w]$ via the expressions $\ell^n[\fdot]$. Indeed, this continuum is a Hilbert scale, and many facts about the spectrum and the operators can be deduced using this point of view (e.g.~\cite{DHS, LW13}). More details about Hilbert scales can be found in \cite{AK, KP}. This particular Hilbert scale with self-adjoint operators that are semi-bounded is the topic of left-definite theory \cite{LW02}.

\subsection{Left-Definite Theory}\label{ss-leftdef}

Left-definite theory deals primarily with the spectral theory of Sturm--Liouville differential operators, while the terminology itself can be traced back to Weyl in 1910 \cite{W}. More explicit connections between classical left-definite Sturm--Liouville problems and boundary conditions date back to at least 1973, when Pleijel studied the Legendre polynomials \cite{P1, P2}. A general framework for the left-definite theory of bounded-below, self-adjoint operators in a Hilbert space wasn't developed until 2002 in the landmark paper by Littlejohn and Wellman \cite{LW02}. Specifically, the left-definite theory allows one to generate a scale of operators (by composition), each of which possess the same spectrum as the original. 

Let $\cV$ be a vector space over $\CC$ with inner product $\langle\fdot,\fdot\rangle$ and norm $||\fdot||$. The resulting inner product space is denoted $(\cV,\langle\fdot,\fdot\rangle)$.

\begin{defn}[{\cite[Theorem 3.1]{LW02}}] \label{t-ldinpro}
Suppose ${\bf A}$ is a self-adjoint operator in the Hilbert space $\cH=(\cV,\langle\fdot,\fdot\rangle )$ that is bounded below by $kI$, where $k>0$. Let $r>0$. Define $\cH_r=(\cV_r, \langle\fdot,\fdot\rangle_r)$ with
$$\cV_r=\cD ({\bf A}^{r/2})$$
and
$$\langle x,y\rangle_r=\langle {\bf A}^{r/2}x,{\bf A}^{r/2}y\rangle \text{ for } (x,y\in \cV_r).$$
Then $\cH_r$ is said to be the $r$th {\em left-definite space} associated with the pair $(\cH,{\bf A})$.
\end{defn}

It was proved in \cite[Theorem 3.1]{LW02} that $\cH_r=(\cV_r, \langle\fdot,\fdot\rangle)$ is also described as the left-definite space associated with the pair $(\cH, {\bf A}^r)$, and we call $\cH_r$ the {\em r}th {\em left-definite space associated with the pair} $(\cH,{\bf A})$.
Specifically, we have:
\begin{enumerate}
\item $\cH_r$ is a Hilbert space,
\item $\cD ({\bf A}^r)$ is a subspace of $\cV_r$,
\item $\cD ({\bf A}^r)$ is dense in $\cH_r$,
\item $\langle x,x\rangle_r\geq k^r\langle x,x\rangle$ ($x\in \cV_r$), and
\item $\langle x,y\rangle_r=\langle {\bf A}^rx,y\rangle$ ($x\in\cD ({\bf A}^r)$, $y\in \cV_r$).
\end{enumerate}

The left-definite domains are defined as the domains of compositions of the self-adjoint operator ${\bf A}$, but the operator acting on this domain is slightly more difficult to define. 

\begin{defn}
Let $\cH=(\cV,\langle\fdot,\fdot\rangle)$ be a Hilbert space. Suppose ${\bf A}:\cD ({\bf A})\subset \cH\to \cH$ is a self-adjoint operator that is bounded below by $k>0$. Let $r>0$. If there exists a self-adjoint operator ${\bf A}_r:\cH_r\to \cH_r$ that is a restriction of ${\bf A}$ from the domain $\cD({\bf A})$ to $\cD({\bf A}^r)$,
we call such an operator an $r$th {\em left-definite operator associated with $(\cH,{\bf A})$}.
\end{defn}

The connection between the $r$th left-definite operator and the $r$th composition of the self-adjoint operator ${\bf A}$ is now made explicit. 

\begin{cor}[{\cite[Corollary 3.3]{LW02}}] \label{t-comppower}
Suppose ${\bf A}$ is a self-adjoint operator in the Hilbert space $\cH$ that is bounded below by $k>0$. For each $r>0$, let $\cH_r=(\cV_r, \langle\fdot,\fdot\rangle_r)$ and ${\bf A}_r$ denote, respectively, the $r$th left-definite space and the $r$th left definite operator associated with $(\cH,{\bf A})$. Then
\begin{enumerate}
\item $\cD ({\bf A}^r)=\cV_{2r}$, in particular, $\cD ({\bf A}^{1/2})=\cV_1$ and $\cD ({\bf A})=\cV_2$;
\item $\cD ({\bf A}_r)=\cD ({\bf A}^{(r+2)/2})$, in particular, $\cD ({\bf A}_1)=\cD ({\bf A}^{3/2})$ and $\cD ({\bf A}_2)=\cD ({\bf A}^2)$.
\end{enumerate}
\end{cor}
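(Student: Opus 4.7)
The natural strategy is to exploit the functional calculus for the self-adjoint operator $\bf A$. Since ${\bf A}\geq kI$ with $k>0$, the spectrum of ${\bf A}$ lies in $[k,\infty)$, so the powers ${\bf A}^{s}$ are well defined via the spectral theorem for every real $s$, and in particular ${\bf A}^{-s}$ is a bounded self-adjoint operator on $\cH$ for $s>0$. Part (1) is then a tautological consequence of the definition of the left-definite space: $\cV_{2r}=\cD({\bf A}^{(2r)/2})=\cD({\bf A}^{r})$, so the particular cases $r=1/2$ and $r=1$ just read off the formula.

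For part (2) the plan is to exhibit ${\bf A}_r$ on $\cH_r$ as unitarily equivalent to ${\bf A}$ on $\cH$ via the natural map $U:={\bf A}^{r/2}$, and then to chase the domain through this unitary. First I would check that $U:\cH_r\to\cH$ is an isometry, which is literally the definition $\langle x,y\rangle_r=\langle {\bf A}^{r/2}x,{\bf A}^{r/2}y\rangle$. Surjectivity follows because, given $z\in\cH$, the element ${\bf A}^{-r/2}z$ lies in $\cV_r=\cD({\bf A}^{r/2})$ (since its image ${\bf A}^{r/2}({\bf A}^{-r/2}z)=z$ is in $\cH$), so ${\bf A}^{r/2}$ is onto. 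Hence $U$ is a unitary with $U^{-1}={\bf A}^{-r/2}$.

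Next, I would show $U{\bf A}_rU^{-1}={\bf A}$. For any $x\in\cD({\bf A}_r)\subseteq\cD({\bf A})$, commutativity within the functional calculus gives
$$U{\bf A}_rU^{-1}(Ux)=U({\bf A}_rx)=U({\bf A}x)={\bf A}^{r/2}{\bf A}x={\bf A}({\bf A}^{r/2}x)={\bf A}(Ux),$$
so $U{\bf A}_rU^{-1}$ is a self-adjoint restriction of $\bf A$. Because $\bf A$ is itself self-adjoint and admits no proper self-adjoint extension (equivalently, a self-adjoint operator has no proper symmetric self-adjoint restriction), we must in fact have $U{\bf A}_rU^{-1}={\bf A}$. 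Consequently $\cD({\bf A}_r)=U^{-1}\cD({\bf A})={\bf A}^{-r/2}\cD({\bf A})$, and a direct functional-calculus computation (an element $y$ lies in ${\bf A}^{-r/2}\cD({\bf A})$ iff ${\bf A}^{r/2}y\in\cD({\bf A})$ iff $\int\la^{r+2}d\|E(\la)y\|^{2}<\infty$) yields the desired identification with $\cD({\bf A}^{(r+2)/2})$; the special cases $r=1,2$ are then immediate.

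The only genuinely delicate point is the step promoting the inclusion $U{\bf A}_rU^{-1}\subseteq{\bf A}$ to an equality. One has to make sure that ${\bf A}_r$ is indeed self-adjoint on $\cH_r$ (guaranteed by the prior Theorem 3.1, which we assume) so that its conjugate by $U$ inherits self-adjointness on $\cH$; everything else reduces to clean manipulations with the spectral calculus and the bounded inverse ${\bf A}^{-r/2}$.
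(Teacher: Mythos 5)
Your argument is essentially correct, but note that the paper does not prove this statement at all: it is quoted verbatim from \cite[Corollary 3.3]{LW02}, where it is deduced from the existence--uniqueness theorems for the left-definite space and operator (Theorems 3.1 and 3.2 there), so there is no in-paper proof to match against. Measured on its own terms, your route is a clean, self-contained alternative: part (1) is indeed immediate once one takes $\cV_r=\cD({\bf A}^{r/2})$ as the definition (as this paper does in Definition \ref{t-ldinpro}), and for part (2) the unitary $U={\bf A}^{r/2}:\cH_r\to\cH$ (isometric by definition of $\langle\cdot,\cdot\rangle_r$, surjective because ${\bf A}^{-r/2}$ is bounded since $\sigma({\bf A})\subset[k,\infty)$ with $k>0$) together with the fact that a self-adjoint operator admits no proper self-adjoint restriction does give $U{\bf A}_rU^{-1}={\bf A}$ and hence $\cD({\bf A}_r)={\bf A}^{-r/2}\cD({\bf A})=\cD({\bf A}^{(r+2)/2})$. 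Two small points deserve explicit mention: the identity ${\bf A}^{r/2}{\bf A}x={\bf A}{\bf A}^{r/2}x$ needs $x\in\cD({\bf A}^{(r+2)/2})$, which you should extract from the definition of ${\bf A}_r$ as a self-adjoint operator \emph{in} $\cH_r$ restricting ${\bf A}$ (so ${\bf A}x={\bf A}_rx\in\cV_r=\cD({\bf A}^{r/2})$, which, using the lower bound $k>0$, is equivalent to $x\in\cD({\bf A}^{(r+2)/2})$ and in fact already yields one inclusion of (2) directly); and your argument tacitly presupposes, as does the corollary, the existence and self-adjointness of ${\bf A}_r$, which in \cite{LW02} is the content of Theorem 3.2 rather than something to be assumed. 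With those provisos your proof is sound, arguably more streamlined than the original derivation, though both ultimately rest on the same spectral-calculus facts.
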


The left-definite theory is particularly important for self-adjoint differential operators that are bounded below, as they are generally unbounded. The theory is trivial for bounded operators, as shown in {\cite[Theorem 3.4]{LW02}}.

Our applications of left-definite theory will be focused on differential operators which possess a complete orthogonal set of eigenfunctions in $\cH$. In {\cite[Theorem 3.6]{LW02}} it was proved that the point spectrum of ${\bf A}$ coincides with that of ${\bf A}_r$, and similarly for the continuous spectrum and for the resolvent set. 
Moreover, it turns out that a complete set of orthogonal eigenfunctions will persist throughout each space in the Hilbert scale.

\begin{theo}[{\cite[Theorem 3.7]{LW02}}] \label{t-leftdefortho}
If $\{\f_n\}_{n=0}^{\infty}$ is a complete orthogonal set of eigenfunctions of ${\bf A}$ in $\cH$, then for each $r>0$, $\{\f_n\}_{n=0}^{\infty}$ is a complete set of orthogonal eigenfunctions of the $r$th left-definite operator ${\bf A}_r$ in the $r$th left-definite space $\cH_r$.
\end{theo}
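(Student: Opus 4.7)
The plan is to verify four assertions in turn: each $\f_n$ belongs to $\cD({\bf A}_r)$; ${\bf A}_r$ has $\f_n$ as an eigenfunction with eigenvalue $\la_n$; the family $\{\f_n\}$ is orthogonal in $\cH_r$; and $\{\f_n\}$ is complete in $\cH_r$. Beyond Corollary \ref{t-comppower}, the only real tool needed is the Borel functional calculus for the bounded-below self-adjoint operator ${\bf A}$, together with the crucial lower bound $\la_n \geq k > 0$ that the eigenvalues inherit from ${\bf A} \geq kI$.

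For the first three assertions I would fix $s > 0$ and observe, via the spectral theorem, that the spectral measure of ${\bf A}$ applied to $\f_n$ is concentrated at $\la_n$ with mass $\|\f_n\|^2$, so that
\begin{align*}
\int_{[k,\infty)} \mu^{2s} \, d\langle E_\mu \f_n, \f_n\rangle = \la_n^{2s} \|\f_n\|^2 < \infty.
\end{align*}
This gives $\f_n \in \cD({\bf A}^s)$ with ${\bf A}^s \f_n = \la_n^s \f_n$; taking $s = (r+2)/2$ and invoking Corollary \ref{t-comppower}(2) places $\f_n \in \cD({\bf A}_r)$, and since ${\bf A}_r$ is a restriction of ${\bf A}$ we obtain ${\bf A}_r \f_n = \la_n \f_n$. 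Orthogonality in $\cH_r$ is then immediate: for $n \neq m$,
\begin{align*}
\langle \f_n, \f_m\rangle_r = \langle {\bf A}^{r/2} \f_n, {\bf A}^{r/2}\f_m\rangle = \la_n^{r/2} \la_m^{r/2} \langle \f_n, \f_m\rangle = 0.
\end{align*}

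The main substantive step is completeness in $\cH_r$, which I would handle via the orthogonal-complement criterion available in any Hilbert space. Suppose $f \in \cV_r$ satisfies $\langle f, \f_n\rangle_r = 0$ for every $n$. Unfolding the definition of $\langle \cdot, \cdot\rangle_r$ and using ${\bf A}^{r/2}\f_n = \la_n^{r/2}\f_n$ yields $\la_n^{r/2} \langle {\bf A}^{r/2} f, \f_n\rangle = 0$; since $\la_n^{r/2} > 0$ this reduces to $\langle {\bf A}^{r/2} f, \f_n\rangle = 0$ in $\cH$ for all $n$. Completeness of $\{\f_n\}$ in $\cH$ now forces ${\bf A}^{r/2} f = 0$, and the functional-calculus estimate ${\bf A}^{r/2} \geq k^{r/2} I > 0$ shows ${\bf A}^{r/2}$ is injective, so $f = 0$.

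The only genuine obstacle is organizational: one must invoke the fractional powers ${\bf A}^s$ for non-integer $s$, requiring the Borel functional calculus rather than plain operator composition. Once the strict positivity $k > 0$ is exploited to make each ${\bf A}^s$ into a densely defined, positive, and in particular injective operator on $\cH$, every remaining step collapses into a one-line computation with the spectral resolution.
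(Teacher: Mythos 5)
Your proof is correct, but note that the paper does not prove this statement at all: it is quoted verbatim from \cite[Theorem 3.7]{LW02}, so there is no internal proof to match against. Your argument is essentially the standard one, with one small detour. Where you pass through ${\bf A}^{r/2}$ and then need injectivity of ${\bf A}^{r/2}$ (via the spectral bound ${\bf A}^{r/2}\geq k^{r/2}I$) to conclude $f=0$ from ${\bf A}^{r/2}f=0$, the Littlejohn--Wellman argument uses property (5) listed after Definition \ref{t-ldinpro}: for $x\in\cD({\bf A}^r)$ and $y\in\cV_r$ one has $\langle x,y\rangle_r=\langle {\bf A}^rx,y\rangle$. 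Taking $x=\f_n$ gives $\langle \f_n,f\rangle_r=\la_n^r\langle\f_n,f\rangle$, so orthogonality of $f$ to every $\f_n$ in $\cH_r$ immediately gives orthogonality in $\cH$ (since $\la_n\geq k>0$), and completeness of $\{\f_n\}$ in $\cH$ forces $f=0$ with no appeal to injectivity of a fractional power; the same identity also handles orthogonality of the $\f_n$ in $\cH_r$ in one line. Your use of the spectral measure to place $\f_n$ in $\cD({\bf A}^s)$ for every $s>0$, and Corollary \ref{t-comppower}(2) to land in $\cD({\bf A}_r)$ so that ${\bf A}_r\f_n=\la_n\f_n$, is exactly right and is the part that genuinely needs the functional calculus.
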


Another perspective on the last theorem is that it gives us a valuable indicator for when a space is a left-definite space for a specific operator.

On the side we note that left-definite theory can be extended to bounded below operators by applying shifts to create a semi-bounded operator. Uniqueness of left-definite domains are then given up to the chosen shift.

A description of these left-definite spaces in terms of standard boundary conditions on a Hilbert space has been noticeably missing, despite the broad framework and range of results described above. A previous paper of the authors \cite{FFL} attempted to remedy this deficiency, and gave both abstract and constructive approaches to the problem.

\subsection{Extension Theory}\label{ss-extensions}

There is a vast amount of literature concerning the extensions of symmetric operators. Here we present only what pertains to self-adjoint extensions and is related to our endeavors.

\begin{defn}[variation of {\cite[Section 14.2]{N}}]\label{d-defect}
For a a symmetric, closed operator ${\bf  A}$ on a Hilbert space $\cH$, define 
the {\bf positive defect space} and the {\bf negative defect space}, respectively, by
$$\cD_+:=\left\{f\in\cD({\bf  A}^*)~:~{\bf  A}^*f=if\right\}
\qquad\text{and}\qquad
\cD_-:=\left\{f\in\cD({\bf  A}^*)~:~{\bf  A}^*f=-if\right\}.$$
\end{defn}

We can assume without loss of generality that all considered operators are closed. This is because {\cite[Theorem XII.4.8]{DS}} says that the self-adjoint extensions of a symmetric operator coincide with those of the closure of the symmetric operator. 

We are particularly interested in the dimensions dim$(\cD_+)=m_+$ and dim$(\cD_-)=m_-$, which are called the {\bf positive} and {\bf negative deficiency indices of ${\bf  A}$}, respectively. These dimensions are usually conveyed as the pair $(m_+,m_-)$. 
The deficiency indices of ${\bf A}$ correspond to how ``far'' from self-adjoint ${\bf  A}$ is. A symmetric operator ${\bf  A}$ has self-adjoint extensions if and only if its deficiency indices are equal {\cite[Section 14.8.8]{N}}.

\begin{theo}[{\cite[Theorem 14.4.4]{N}}]\label{t-decomp}
If ${\bf  A}$ is a closed, symmetric operator, then the subspaces $\cD_{\bf  A}$, $\mathcal{D}_+$, and $\mathcal{D}_{-}$ are linearly independent and their direct sum coincides with $\cD_{{\bf  A}^*}$, i.e.,
$$\cD_{{\bf  A}^*}=\cD_{\bf  A}\dotplus\mathcal{D}_+ \dotplus\mathcal{D}_{-}.$$
(Here, subspaces $\cX_1, \cX_2, \hdots ,\cX_p$ are said to be {\bf linearly independent}, if $\sum_{i=1}^p x_i = 0$ for $x_i\in \cX_i$ implies that all $x_i=0$.)
\end{theo}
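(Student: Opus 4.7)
The plan is to equip $\cD_{{\bf A}^*}$ with its graph inner product and show that, in the resulting Hilbert space, the three subspaces are mutually orthogonal with trivial intersection with the full sum, so that linear independence is immediate and the direct sum exhausts $\cD_{{\bf A}^*}$. Concretely, define
\begin{equation*}
\langle f,g\rangle_{{\bf A}^*} := \langle f,g\rangle + \langle {\bf A}^* f, {\bf A}^* g\rangle, \qquad f,g\in\cD_{{\bf A}^*}.
\end{equation*}
Since ${\bf A}$ is closed and densely defined, ${\bf A}^*$ is closed, so $(\cD_{{\bf A}^*},\langle\cdot,\cdot\rangle_{{\bf A}^*})$ is a Hilbert space, and $\cD_{\bf A}$, $\cD_+$, $\cD_-$ are closed subspaces (the latter two being eigenspaces of ${\bf A}^*$ for $\pm i$, while $\cD_{\bf A}$ is closed in the graph norm since ${\bf A}$ itself is closed).

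Next I would verify pairwise orthogonality in $\langle\cdot,\cdot\rangle_{{\bf A}^*}$ by direct computation. For $f\in\cD_{\bf A}$ and $g\in\cD_+$, using ${\bf A}^* g = i g$ and the adjoint relation $\langle {\bf A} f, g\rangle = \langle f, {\bf A}^* g\rangle$, one gets
\begin{equation*}
\langle f,g\rangle_{{\bf A}^*} = \langle f,g\rangle + \langle {\bf A} f, i g\rangle = \langle f,g\rangle - i\,\langle f, {\bf A}^* g\rangle = \langle f,g\rangle - i\,\langle f, i g\rangle = 0,
\end{equation*}
and similarly for $g\in\cD_-$. For $f\in\cD_+$, $g\in\cD_-$, a parallel computation using ${\bf A}^* f = if$ and ${\bf A}^* g = -ig$ yields $\langle f,g\rangle_{{\bf A}^*}=0$. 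Mutual orthogonality in particular gives the definition of linear independence stated in the theorem.

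The remaining and most delicate step is to show $\cD_{{\bf A}^*} = \cD_{\bf A} + \cD_+ + \cD_-$. I would argue by orthogonal complements: suppose $h\in\cD_{{\bf A}^*}$ satisfies $\langle h,\cdot\rangle_{{\bf A}^*}=0$ on each of $\cD_{\bf A}$, $\cD_+$, $\cD_-$. Orthogonality to every $f\in\cD_{\bf A}$ reads
\begin{equation*}
\langle f,h\rangle + \langle {\bf A} f, {\bf A}^* h\rangle = 0,
\end{equation*}
which is exactly the statement that ${\bf A}^* h \in \cD({\bf A}^{**}) = \cD_{\bf A}$ (using ${\bf A}^{**}={\bf A}$ since ${\bf A}$ is closed) and ${\bf A}\,{\bf A}^* h = -h$. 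Setting $k := {\bf A}^* h + i h$, one checks $k\in\cD_{{\bf A}^*}$ with ${\bf A}^* k = {\bf A}\,{\bf A}^* h + i\,{\bf A}^* h = -h + i\,{\bf A}^* h = i k$, so $k\in\cD_+$. Now test orthogonality of $h$ against $k\in\cD_+$:
\begin{equation*}
0 = \langle k,h\rangle_{{\bf A}^*} = \langle k,h\rangle + \langle i k, {\bf A}^* h\rangle = \langle k, h - i\,{\bf A}^* h\rangle = \langle k, -i k\rangle = i\,\|k\|^2,
\end{equation*}
forcing $k=0$, i.e.\ ${\bf A}^* h = -i h$, so $h\in\cD_-$. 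Combined with orthogonality to $\cD_-$ this gives $h=0$, completing the proof.

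The main obstacle is this last step: one has to produce the candidate elements of $\cD_\pm$ out of $h$ via the algebraic identity $({\bf A}^*)^2 h = -h$, keep careful track of the sesquilinear bookkeeping with $\pm i$, and use closedness of ${\bf A}$ to legitimately apply ${\bf A}^{**}={\bf A}$. Everything else is a direct verification in the graph Hilbert space.
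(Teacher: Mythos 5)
The paper does not prove this statement at all — it quotes it from Naimark — so there is no in-paper argument to compare against; your proposal is essentially the classical von Neumann/Naimark proof: put the graph inner product on $\cD_{{\bf A}^*}$, check that $\cD_{\bf A}$, $\cD_+$, $\cD_-$ are graph-closed and mutually orthogonal (which gives the stated linear independence), and then show that a vector graph-orthogonal to all three must vanish. Your orthogonality computations are correct, and the closing device (forming $k={\bf A}^*h+ih$, checking ${\bf A}^*k=ik$, testing $h$ against $k$ to force $k=0$, hence $h\in\cD_-$, hence $h=0$) is exactly the right mechanism.

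One step is misstated, and as written it asserts something that is false in general. Graph-orthogonality of $h$ to $\cD_{\bf A}$ says $\langle {\bf A}f,{\bf A}^*h\rangle=\langle f,-h\rangle$ for all $f\in\cD_{\bf A}$; by the definition of the adjoint of ${\bf A}$ (the test vectors here are ${\bf A}f$ with $f\in\cD_{\bf A}$), this is exactly the statement that ${\bf A}^*h\in\cD({\bf A}^*)$ with ${\bf A}^*\bigl({\bf A}^*h\bigr)=-h$ --- not that ${\bf A}^*h\in\cD({\bf A}^{**})=\cD_{\bf A}$. Membership in $\cD({\bf A}^{**})$ would require the identity $\langle {\bf A}^*g,{\bf A}^*h\rangle=\langle g,-h\rangle$ for all $g\in\cD({\bf A}^*)$, which you have not established; and indeed the claim fails in general, since every $h\in\cD_-$ is graph-orthogonal to $\cD_{\bf A}$, while ${\bf A}^*h=-ih$ lies in $\cD_{\bf A}$ only when $h=0$ (think of a defect element of the minimal Jacobi operator). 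Fortunately, the remainder of your argument uses only the relation $({\bf A}^*)^2h=-h$: replacing ${\bf A}\,{\bf A}^*h$ by ${\bf A}^*\,{\bf A}^*h$ in the verification ${\bf A}^*k=ik$, everything else goes through verbatim, so the proof is repaired by correcting this single identification. (Closedness of ${\bf A}$ is then not needed for that step; it is needed, as you correctly use it, to know that $\cD_{\bf A}$ is closed in the graph norm so that the orthogonal-complement argument, applied to the closed sum of the three mutually orthogonal closed subspaces, is legitimate.)
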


We now let $\ell[\fdot]$ be a Sturm--Liouville differential expression in order to introduce more specific definitions. It is important to reiterate that the analysis of self-adjoint extensions does not involve changing the differential expression associated with the operator at all, merely the domain of definition, by applying boundary conditions. 

\begin{defn}[{\cite[Section 17.2]{N}}]\label{d-max}
The {\bf maximal domain} of $\ell[\fdot]$ is given by 
\begin{align*}
\cD\ti{max}=\cD\ti{max}(\ell):=\bigg\{f:(a,b)\to\mathbb{C}~:~f,pf'\in\text{AC}\ti{loc}(a,b),
f,\ell[f]\in L^2[(a,b),w]\bigg\}.
\end{align*}
\end{defn}

The designation of ``maximal'' is appropriate in this case because $\cD\ti{max}(\ell)$ is the largest possible subspace for which $\ell$ maps back into $L^2[(a,b),w]$. For $f,g\in\cD\ti{max}(\ell)$ and $a<\al\le \beta<b$ the {\bf sesquilinear form} associated with $\ell$ by 
\begin{equation}\label{e-greens}
[f,g]\bigg|_{\al}^{\beta}:=\int_{\al}^{\beta}\left\{\ell[f(x)]\overline{g(x)}-\ell[\overline{g(x)}]f(x)\right\}w(x)dx.
\end{equation}

\begin{theo}[{\cite[Section 17.2]{N}}]\label{t-limits}
The limits $[f,g](b):=\lim_{x\to b^-}[f,g](x)$ and $[f,g](a):=\lim_{x\to a^+}[f,g](x)$ exist and are finite for $f,g\in\cD\ti{max}(\ell)$.
\end{theo}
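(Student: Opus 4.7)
The plan is to reduce the existence of the endpoint limits to the absolute convergence of the integral defining the sesquilinear form in \eqref{e-greens}. First, I would fix an interior reference point $x_0\in(a,b)$ and interpret \eqref{e-greens} as the statement that
\begin{equation*}
[f,g](x)-[f,g](x_0)=\int_{x_0}^{x}\bigl(\ell[f(t)]\overline{g(t)}-\ell[\overline{g(t)}]f(t)\bigr)w(t)\,dt
\end{equation*}
for every $x\in(a,b)$. Showing that $[f,g](b)$ and $[f,g](a)$ exist and are finite is therefore equivalent to showing that these improper integrals converge as $x\to b^-$ and $x\to a^+$, respectively.

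Next, I would apply the Cauchy--Schwarz inequality on $L^2[(a,b),w]$ to obtain an $L^1$ dominating bound. Since $f,g\in\cD\ti{max}(\ell)$, all four of $f,g,\ell[f],\ell[\bar g]$ lie in $L^2[(a,b),w]$; here $\bar g\in\cD\ti{max}(\ell)$ because the coefficients $p,q,w$ are real-valued and the regularity requirements in Definition \ref{d-max} are preserved by complex conjugation (in particular $\ell[\bar g]=\overline{\ell[g]}$). Cauchy--Schwarz then gives
\begin{equation*}
\int_a^b\bigl|\ell[f]\overline{g}-\ell[\overline{g}]f\bigr|\,w\,dx\le\|\ell[f]\|_{L^2_w}\|g\|_{L^2_w}+\|\ell[\bar g]\|_{L^2_w}\|f\|_{L^2_w}<\infty,
\end{equation*}
so the integrand belongs to $L^1\bigl((a,b),dx\bigr)$ and both improper integrals $\int_{x_0}^{b^-}$ and $\int_{a^+}^{x_0}$ converge absolutely. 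Plugging this back into the identity displayed above yields finite values for $[f,g](b)$ and $[f,g](a)$.

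There is no serious analytic obstacle here. The only checkpoint requiring a moment of thought is the conjugation-invariance of $\cD\ti{max}(\ell)$, which underlies the use of $\ell[\bar g]\in L^2[(a,b),w]$ and which follows from the reality of the Sturm--Liouville coefficients $p$, $q$, and $w$. Beyond that, the argument is a direct application of Cauchy--Schwarz together with the standard fact that an $L^1$ function on $(a,b)$ has an absolutely continuous indefinite integral with finite one-sided limits at each endpoint.
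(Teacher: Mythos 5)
Your argument is correct and coincides with the standard proof: the paper does not prove Theorem \ref{t-limits} itself but quotes it from Naimark (Section 17.2), where the reasoning is exactly what you wrote --- Green's formula on compact subintervals $[x_0,x]\subset(a,b)$ expresses $[f,g](x)-[f,g](x_0)$ as an integral whose integrand is in $L^1$ by Cauchy--Schwarz, since $f,g,\ell[f],\ell[g]\in L^2[(a,b),w]$, so both one-sided limits exist and are finite. The one checkpoint you single out, that $\ell[\bar g]=\overline{\ell[g]}\in L^2[(a,b),w]$ because $p,q,w$ are real, is indeed the only point needing comment, and you handle it correctly.
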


The equation \eqref{e-greens} is {\bf Green's formula} for $\ell[\fdot]$, and in our case can be explicitly computed using integration by parts to be a modified Wronskian
\begin{align}\label{e-mwronskian}
[f,g]\bigg|_a^b:=p(x)[f'(x)g(x)-f(x)g'(x)]\bigg|_a^b.
\end{align}

\begin{defn}[{\cite[Section 17.2]{N}}]\label{d-min}
The {\bf minimal domain} of $\ell[\fdot]$ is given by
\begin{align*}
\cD^n\ti{min}=\cD\ti{min}(\ell)=\{f\in\cD\ti{max}(\ell)~:~[f,g]\big|_a^b=0~~\forall g\in\cD\ti{max}(\ell)\}.
\end{align*}
\end{defn}

The maximal and minimal operators associated with the expression $\ell[\fdot]$ are defined as ${\bf L}\ti{min}=\{\ell,\cD\ti{min}\}$ and ${\bf L}\ti{max}=\{\ell,\cD\ti{max}\}$ respectively. By {\cite[Section 17.2]{N}}, these operators are adjoints of one another, i.e.~$({\bf L}\ti{min})^*={\bf L}\ti{max}$ and $({\bf L}\ti{max})^*={\bf L}\ti{min}$.

In the context of differential operators, we work with the a special case of Theorem \ref{t-decomp}:

\begin{theo}[{\cite[Section 14.5]{N}}]\label{t-vN}
Let $\cD\ti{max}$ and $\cD\ti{min}$ be the maximal and minimal domains associated with the differential expression $\ell[\fdot]$, respectively. Then, 
\begin{equation}\label{e-vN}
\cD\ti{max}=\cD\ti{min}\dotplus\cD_+\dotplus\cD_-.
\end{equation}
\end{theo}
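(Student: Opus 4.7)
The plan is to obtain this theorem as a direct specialization of the abstract von Neumann decomposition (Theorem \ref{t-decomp}) applied to the minimal operator ${\bf L}\ti{min}=\{\ell,\cD\ti{min}\}$. The required hypothesis of Theorem \ref{t-decomp} is that the operator be closed and symmetric, so the first step is to record that ${\bf L}\ti{min}$ has both properties. Symmetry is immediate from Definition \ref{d-min}, since for $f,g\in\cD\ti{min}$ Green's formula \eqref{e-greens} gives $\langle \ell[f],g\rangle-\langle f,\ell[g]\rangle=[f,g]\big|_a^b=0$, the vanishing boundary term being guaranteed because each of $f,g$ lies in $\cD\ti{min}$ and therefore pairs trivially against every element of $\cD\ti{max}$. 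Closedness is part of the standard minimal/maximal duality recorded after Definition \ref{d-min}: the identities $({\bf L}\ti{min})^*={\bf L}\ti{max}$ and $({\bf L}\ti{max})^*={\bf L}\ti{min}$ from \cite{N} force ${\bf L}\ti{min}$ to equal its double adjoint, hence to be closed.

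With ${\bf L}\ti{min}$ verified as a closed symmetric operator on $\cH=L^2[(a,b),w]$, I would then invoke Theorem \ref{t-decomp} with ${\bf A}={\bf L}\ti{min}$. This yields
\begin{equation*}
\cD_{{\bf L}\ti{min}^*}=\cD_{{\bf L}\ti{min}}\dotplus\cD_+\dotplus\cD_-,
\end{equation*}
where the defect spaces on the right are those of Definition \ref{d-defect} formed with the adjoint ${\bf L}\ti{min}^*={\bf L}\ti{max}$. The second step is simply to translate notation: $\cD_{{\bf L}\ti{min}}=\cD\ti{min}$ and, by the duality identity $({\bf L}\ti{min})^*={\bf L}\ti{max}$, $\cD_{{\bf L}\ti{min}^*}=\cD\ti{max}$. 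Substituting these identifications gives exactly \eqref{e-vN}.

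The only genuinely non-formal ingredient is the identification $({\bf L}\ti{min})^*={\bf L}\ti{max}$, which for singular Sturm--Liouville expressions rests on Green's formula together with the existence and finiteness of the boundary limits asserted by Theorem \ref{t-limits}. Since this identification is imported from \cite{N} and explicitly recorded in the paragraph preceding Theorem \ref{t-vN}, no further work is needed here. I therefore expect no substantive obstacle: the result is a bookkeeping translation of Theorem \ref{t-decomp} into the Sturm--Liouville language, and the proof can be written in a few lines once the closedness and symmetry of ${\bf L}\ti{min}$ and the duality with ${\bf L}\ti{max}$ are cited.
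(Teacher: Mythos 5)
Your proposal is correct and matches the paper's own treatment: the paper states Theorem \ref{t-vN} precisely as ``a special case of Theorem \ref{t-decomp}'' and imports the proof from \cite[Section 14.5]{N}, which is exactly your route of applying the abstract decomposition to ${\bf L}\ti{min}$ after noting it is closed and symmetric with $({\bf L}\ti{min})^*={\bf L}\ti{max}$. The verification details you supply (symmetry via Green's formula and Definition \ref{d-min}, closedness via the double-adjoint identity) are sound and simply make explicit what the paper cites.
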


Equation \eqref{e-vN} is commonly known as {\bf von Neumann's formula}. Here $\dotplus$ denotes the direct sum, and $\cD_+,\cD_-$ are the defect spaces associated with the expression $\ell[\fdot]$. The decomposition can be made into an orthogonal direct sum by using the graph norm, see \cite{FFL}.

From {\cite[Section 14.8.8]{N}} we know that, if the operator ${\bf L}^n\ti{min}$ has any self-adjoint extensions, then the deficiency indices of ${\bf L}^n\ti{min}$ have the form $(m,m)$, where $0\leq m\leq 2n$ and $2n$ is the order of $\ell^n[\fdot]$.
Glazman \cite{G} proved that the number $m$ can take on any value between $0$ and $2n$. In regards to differential expressions, the order of the operator is greater than or equal to each of the two deficiency indices by necessity.  Hence, Sturm--Liouville expressions that generate self-adjoint operators have deficiency indices $(0,0)$, $(1,1)$ or $(2,2)$. This is related to the discussion of an expression being limit-point or limit-circle at endpoints, see \cite{AK, DS, N, W1, W2, W} for more details. 

The following theorem gives a concrete relation between the defect spaces and GKN boundary conditions, and is used to prove the main GKN theorems. To this end, let $\f_j$, for $j=1,\dots m$, denote an orthonormal basis of $\cD_+$. The functions $\overline{\f_j}$ are thus an orthonormal basis of $\cD_-.$

\begin{theo}[{\cite[Theorem 18.1.2]{N}}]\label{t-gknmatrix}
Every self-adjoint extension ${\bf L}^n=\{\ell^n,\cD_{{\bf L}}^n\}$ of the minimal operator ${\bf L}^n\ti{min}=\{\ell^n,\cD^n\ti{min}\}$ with deficiency indices $(m,m)$ can be characterized by means of a unitary $m\times m$ matrix $u=[u_{jk}]$ in the following way:

Its domain of definition $\cD_{{\bf L}}^n$ is the set of all functions $z(x)$ of the form
$$ z(x)=y(x)+\psi(x), $$
where $y(x)\in\cD^n\ti{min}$ and $\psi(x)$ is a linear combination of the functions
$$ \psi_j(x)=\f_j(x)+\sum_{k=1}^m u_{kj}\overline{\f_k(x)}, \quad j=1,\dots,m. $$
Conversely, every unitary $m\times m$ matrix $u=[u_{jk}]$ determines (in the way described above) a certain self-adjoint extension ${\bf L}^n$ of the operator ${\bf L}^n\ti{min}$. The correspondence thus established between ${\bf L}^n$ and $u$ is one-to-one.
\end{theo}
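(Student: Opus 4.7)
The plan is to reduce the problem to finite-dimensional linear algebra on the quotient $\cD\ti{max}(\ell^n)/\cD^n\ti{min}$ and then characterize self-adjoint extensions as maximal isotropic subspaces of an explicit Hermitian form of signature $(m,m)$. By Theorem \ref{t-vN}, every $z\in\cD\ti{max}(\ell^n)$ admits a unique decomposition $z = y + f_+ + f_-$ with $y\in\cD^n\ti{min}$ and $f_\pm\in\cD_\pm$, so any operator $\{\ell^n,\cD_{{\bf L}}^n\}$ sandwiched between ${\bf L}^n\ti{min}$ and ${\bf L}^n\ti{max}$ is determined by a subspace $\cM\subseteq\cD_+\dotplus\cD_-$ via $\cD_{{\bf L}}^n = \cD^n\ti{min}\dotplus\cM$. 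Since $({\bf L}^n\ti{min})^* = {\bf L}^n\ti{max}$, the extension is symmetric if and only if $[z,w]\big|_a^b = 0$ for all $z,w\in\cM$, and self-adjoint if and only if $\cM$ is moreover maximal with this property.

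Next I would evaluate the form explicitly on $\cD_+\dotplus\cD_-$. Using the eigen-equations $\ell^n\f_j = i\f_j$ and $\ell^n\overline{\f_k} = -i\overline{\f_k}$, together with the fact that the sesquilinear form vanishes whenever one entry lies in $\cD^n\ti{min}$ by Definition \ref{d-min}, a direct computation from \eqref{e-greens} shows that the cross terms $[\f_j,\overline{\f_k}]$ cancel under sesquilinearity while the like terms combine to yield
\begin{align*}
[z,w]\big|_a^b = 2i\bigl(\langle f_+,g_+\rangle - \langle f_-,g_-\rangle\bigr)
\end{align*}
for $z = y+f_++f_-$ and $w = y'+g_++g_-$. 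Up to the factor $2i$ this is the standard indefinite Hermitian form of signature $(m,m)$ on the $2m$-dimensional space $\cD_+\dotplus\cD_-$, since $\{\f_j\}$ and $\{\overline{\f_k}\}$ are orthonormal bases of $\cD_+$ and $\cD_-$ respectively (the latter because $\ell^n$ has real coefficients).

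The third step is a finite-dimensional argument. A maximal isotropic subspace of such a form has dimension exactly $m$, and its intersection with either $\cD_+$ or $\cD_-$ must be trivial, since a nonzero $f_\pm$ alone would yield $\pm 2i\|f_\pm\|^2\ne 0$. Hence $\cM$ is the graph of a linear map $U\colon\cD_+\to\cD_-$, and isotropy translates to $\langle Uf_+,Ug_+\rangle = \langle f_+,g_+\rangle$, making $U$ an isometry and therefore unitary between spaces of equal dimension. Writing $U\f_j = \sum_{k=1}^m u_{kj}\overline{\f_k}$ in the chosen orthonormal bases recovers the functions $\psi_j = \f_j + U\f_j$ of the statement, and the correspondence $u\longleftrightarrow \cM$ is manifestly one-to-one by uniqueness of matrix representations. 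Expanding $f_+ = \sum_j c_j\f_j$, the isometry condition reads $\|c\|^2 = \|uc\|^2$ for every $c\in\CC^m$, equivalent to $u^*u = I$.

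The main obstacle I anticipate is the index bookkeeping in this last step: one must verify that the convention $\psi_j = \f_j + \sum_k u_{kj}\overline{\f_k}$ of the statement corresponds to $u$ itself being unitary, rather than its transpose or complex conjugate. Careful tracking of which index of $u_{kj}$ labels the basis of $\cD_+$ versus $\cD_-$ when evaluating $\|Uf_+\|^2$ via the expansion $Uf_+ = \sum_k\bigl(\sum_j u_{kj}c_j\bigr)\overline{\f_k}$ resolves this and yields $u^*u = I$ as asserted.
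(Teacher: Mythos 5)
Your argument is correct, but it is worth noting that the paper does not prove this statement at all: it is quoted verbatim from Naimark \cite[Theorem 18.1.2]{N}, whose proof runs through von Neumann's extension theory via the Cayley transform (symmetric extensions correspond to isometric extensions of the Cayley transform of ${\bf L}^n\ti{min}$, self-adjoint ones to unitaries $\cD_+\to\cD_-$). Your route is the equivalent ``boundary form'' packaging: using Theorem \ref{t-vN} to reduce to the quotient, computing $[z,w]\big|_a^b=2i\bigl(\langle f_+,g_+\rangle-\langle f_-,g_-\rangle\bigr)$ on $\cD_+\dotplus\cD_-$ (your computation, including the vanishing of the cross terms, is right, and the identity $\ell^n[\bar g]=\overline{\ell^n[g]}$ needed to read \eqref{e-greens} as $\langle\ell^n z,w\rangle-\langle z,\ell^n w\rangle$ holds because the coefficients and weight are real), and then classifying maximal isotropic subspaces of this signature-$(m,m)$ form as graphs of unitaries $U:\cD_+\to\cD_-$. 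What this buys is a self-contained, finite-dimensional argument that sits naturally next to the paper's later use of sesquilinear forms and of Theorems \ref{t-gkn1}--\ref{t-gkn2}; what Naimark's route buys is that symmetric (not necessarily self-adjoint) extensions and unequal deficiency indices are handled in the same stroke. Two small points you should make explicit to be airtight: (i) the equivalence ``self-adjoint $\iff$ $\cM$ maximal isotropic'' rests on the identification $\cD\bigl((\{\ell^n,\cD\ti{min}^n\dotplus\cM\})^*\bigr)=\cD^n\ti{min}\dotplus\cM^{[\perp]}$, where $\cM^{[\perp]}$ is the form-orthogonal companion inside $\cD_+\dotplus\cD_-$; this follows from $({\bf L}^n\ti{min})^*={\bf L}^n\ti{max}$ and Green's formula, and then nondegeneracy of the form gives $\dim\cM^{[\perp]}=2m-\dim\cM$, so isotropy plus $\dim\cM=m$ forces equality. (ii) The converse direction of the statement (every unitary $u$ yields a self-adjoint extension) should be stated: the graph of a unitary is maximal isotropic by the same identity, so it is covered by your equivalences, but say so. With those lines added, your index bookkeeping at the end correctly yields $u^*u=I$ and the bijectivity of the correspondence follows from the directness of the sum in Theorem \ref{t-vN}.
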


In order to formulate the GKN Theorems, we recall an extension of linear independence to one that mods out by a subspace. This subspace will be the minimal domain in applications.

\begin{defn}[{\cite[Section 14.6]{N}}]\label{d-linind}
Let $\cX_1$ and $\cX_2$ be subspaces of a vector space $\cX$ such that $\cX_1\le \cX_2$. Let $\{x_1,x_2,\dots,x_r\}\subseteq \cX_2$. We say that $\{x_1,x_2,\dots,x_r\}$ is {\bf linearly independent modulo $\cX_1$} if
$$\sum_{i=1}^r\al_ix_i\in \cX_1 \text{ implies } \al_i=0\text{ for all }i=1,2,\dots, r.$$
\end{defn}

The following two theorems form the core of GKN theory.

\begin{theo}[GKN1,~{\cite[Theorem 18.1.4]{N}}]\label{t-gkn1}
Let ${\bf L}^n=\{\ell^n,\cD_{{\bf L}}^n\}$ be a self-adjoint extension of the minimal operator ${\bf L}^n\ti{min}=\{\ell^n,\cD^n\ti{min}\}$ with deficiency indices $(m,m)$ and associated sesquilinear form $[\cdot,\cdot]_n$. Then the domain $\cD_{{\bf L}}^n$ consists of the set of all functions $f\in\cD^n\ti{max}$, which satisfy the conditions
\begin{equation}\label{e-gkn1a}
[f,w_k]_n\bigg|_a^b=0, \text{ }k=1,2,\dots,m ,
\end{equation}
where $w_1,\dots,w_m\in \cD^n\ti{max}$ are linearly independent modulo $\cD^n\ti{min}$ for which the relations
\begin{equation}\label{e-gkn1b}
[w_j,w_k]_n\bigg|_a^b=0, \text{ }j,k=1,2,\dots,m
\end{equation}
hold.
\end{theo}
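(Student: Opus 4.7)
The plan is to lift the entire argument to the finite-dimensional quotient $\cD^n\ti{max}/\cD^n\ti{min}$, on which the sesquilinear form $[\cdot,\cdot]_n|_a^b$ descends to a non-degenerate conjugate-skew-symmetric form, and to interpret self-adjoint extensions as its Lagrangian (maximal isotropic) subspaces. Theorem \ref{t-gkn1} then reduces to a short linear-algebra observation about bases of such subspaces.

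First I would verify that $[\cdot,\cdot]_n|_a^b$ descends to a well-defined form on the quotient. Vanishing in either slot when one argument lies in $\cD^n\ti{min}$ follows directly from Definition \ref{d-min} together with the conjugate-skew-symmetry $\overline{[f,g]_n|_a^b}=-[g,f]_n|_a^b$, which is a consequence of Green's formula \eqref{e-greens} and the reality of the coefficients in \eqref{e-lagrangian}. Non-degeneracy is the converse implication built into the same definition: any $f\in\cD^n\ti{max}$ pairing trivially against all of $\cD^n\ti{max}$ must lie in $\cD^n\ti{min}$. Invoking Theorem \ref{t-decomp} and \eqref{e-vN}, the quotient has dimension $2m$, and the descended form is non-degenerate on it.

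Next I would show that sending a self-adjoint extension ${\bf L}^n=\{\ell^n,\cD_{{\bf L}}^n\}$ to the image $\cD_{{\bf L}}^n/\cD^n\ti{min}$ defines a bijection onto the Lagrangian subspaces of the quotient. Symmetry of ${\bf L}^n$ forces isotropy of its image, while the self-adjoint equation $({\bf L}^n)^*={\bf L}^n$ pins the isotropic subspace down to its own annihilator; non-degeneracy then forces the common dimension to be $m$, matching the deficiency index in Theorem \ref{t-gknmatrix}. With this bijection in hand, the hypotheses on $w_1,\dots,w_m$ say exactly that their images form a basis of an $m$-dimensional isotropic, hence Lagrangian, subspace $\cL$, and the boundary conditions \eqref{e-gkn1a} cut out precisely the preimage of the annihilator $\cL^{\perp}=\cL$. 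Conversely, given any self-adjoint extension, any lift of a basis of the corresponding Lagrangian supplies the required $w_k$.

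The main obstacle is rigorously matching the matrix description of Theorem \ref{t-gknmatrix} with the Lagrangian picture, which is needed to ensure both existence and exhaustion of the extensions. Concretely, one must compute $[\psi_j,\psi_k]_n|_a^b$ for $\psi_j=\f_j+\sum_\ell u_{\ell j}\overline{\f_\ell}$ using ${\bf L}^n\ti{max}\f_j=i\f_j$ and ${\bf L}^n\ti{max}\overline{\f_k}=-i\overline{\f_k}$, and show that vanishing of this quantity for all $j,k$ is equivalent to $u^*u=I$. Combined with the linear independence of $\{\f_j,\overline{\f_k}\}$ modulo $\cD^n\ti{min}$ guaranteed by Theorem \ref{t-decomp}, this identifies the extension encoded by a unitary $u$ with a specific Lagrangian, closing the loop between the three descriptions of self-adjoint extensions and yielding Theorem \ref{t-gkn1} as an immediate corollary.
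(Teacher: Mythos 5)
This statement is quoted background: the paper takes Theorem \ref{t-gkn1} directly from Naimark \cite[Theorem 18.1.4]{N} and supplies no proof of its own, so there is no internal argument to compare against. Your proposal is a correct outline of the standard modern proof — pass to the finite-dimensional quotient $\cD^n\ti{max}/\cD^n\ti{min}$, note that Green's formula makes the boundary form conjugate-skew-symmetric and (by the very definition of $\cD^n\ti{min}$) non-degenerate there, use von Neumann's formula \eqref{e-vN} to get dimension $2m$, and identify self-adjoint extensions with Lagrangian subspaces — which is the Everitt--Markus style symplectic treatment rather than Naimark's original route through the unitary-matrix parametrization of Theorem \ref{t-gknmatrix} and the Cayley transform. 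Your version buys a cleaner conceptual picture and makes GKN1 and GKN2 simultaneous corollaries of one linear-algebra fact. Two points deserve explicit justification rather than a gesture: (i) the step ``self-adjointness pins the isotropic subspace down to its own annihilator'' rests on the identification $\cD_{({\bf L}^n)^*}=\{f\in\cD^n\ti{max}:[f,g]_n|_a^b=0\ \forall g\in\cD_{{\bf L}}^n\}$, which must be derived from $({\bf L}\ti{min})^*={\bf L}\ti{max}$ and Green's formula before the quotient argument can run; and (ii) the closing computation is in fact fine and worth recording, since with $\f_j$ orthonormal in $\cD_+$ one gets $[\psi_j,\psi_k]_n|_a^b=2i\bigl(\delta_{jk}-(u^*u)_{kj}\bigr)$, so isotropy of the span of the $\psi_j$ is exactly unitarity of $u$ — though once the Lagrangian correspondence is proved directly from (i), the detour through Theorem \ref{t-gknmatrix} is not actually needed for GKN1 itself.
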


The requirements in equation \eqref{e-gkn1b} are commonly referred to as {\bf Glazman symmetry conditions}. The converse of the GKN1 Theorem is also true.

\begin{theo}[GKN2,~{\cite[Theorem 18.1.4]{N}}]\label{t-gkn2}
Assume we are given arbitrary functions $w_1,w_2,\dots,w_m\in\cD^n\ti{max}$ which are linearly independent modulo $\cD^n\ti{min}$ and which satisfy the relations \eqref{e-gkn1b}. Then the set of all functions $f\in\cD^n\ti{max}$ which satisfy the conditions \eqref{e-gkn1a} is domain of a self-adjoint extension of ${\bf L}^n\ti{min}$.
\end{theo}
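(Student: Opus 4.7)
The plan is to reduce the claim to a standard Lagrangian-subspace argument on the finite-dimensional quotient $\cV:=\cD^n\ti{max}/\cD^n\ti{min}$. Throughout, I work under the implicit GKN convention that the number $m$ of boundary functions equals the common deficiency index, so that by Theorem \ref{t-vN} one has $\dim\cV=2m$. The sesquilinear form $[\fdot,\fdot]_n\big|_a^b$ is finite on $\cD^n\ti{max}$ by Theorem \ref{t-limits} and vanishes whenever either slot lies in $\cD^n\ti{min}$ (Definition \ref{d-min} combined with the conjugate antisymmetry $[f,g]=-\overline{[g,f]}$ visible from \eqref{e-greens}). Hence it descends to a form $\omega$ on $\cV$ which is non-degenerate: if $\omega([f],[g])=0$ for every $[g]\in\cV$, then $[f,g]_n\big|_a^b=0$ for every $g\in\cD^n\ti{max}$, so $f\in\cD^n\ti{min}$ by Definition \ref{d-min}.

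First I would verify symmetry of ${\bf L}^n:=\{\ell^n,\cD_{{\bf L}}^n\}$, where $\cD_{{\bf L}}^n$ denotes the domain defined in the statement. By hypothesis the classes $[w_1],\ldots,[w_m]$ are linearly independent in $\cV$, and the Glazman conditions \eqref{e-gkn1b} say that the $m$-dimensional subspace $W:=\mathrm{span}\{[w_1],\ldots,[w_m]\}\subseteq\cV$ is isotropic for $\omega$. Since $\omega$ is non-degenerate on the $2m$-dimensional space $\cV$, any isotropic subspace has dimension at most $m$, so $W$ is Lagrangian, and in particular $W^\omega=W$, where $W^\omega$ denotes the annihilator of $W$ with respect to $\omega$. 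Any $f\in\cD_{{\bf L}}^n$ satisfies $\omega([f],[w_k])=0$ for all $k$, hence $[f]\in W$, and we may write $f=y+\sum_j c_jw_j$ with $y\in\cD^n\ti{min}$. Bilinearity together with \eqref{e-gkn1b} then yields $[f_1,f_2]_n\big|_a^b=0$ for all $f_1,f_2\in\cD_{{\bf L}}^n$.

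To upgrade symmetry to self-adjointness I would use the inclusion chain ${\bf L}^n\ti{min}\subseteq{\bf L}^n\subseteq({\bf L}^n)^*\subseteq{\bf L}^n\ti{max}$, which follows from symmetry combined with $({\bf L}^n\ti{min})^*={\bf L}^n\ti{max}$ noted just after Definition \ref{d-min}. Given $g\in\cD(({\bf L}^n)^*)\subseteq\cD^n\ti{max}$, the defining property $[f,g]_n\big|_a^b=0$ for every $f\in\cD_{{\bf L}}^n$ applies in particular to $f=w_k$, which belongs to $\cD_{{\bf L}}^n$ by \eqref{e-gkn1b}. Conjugate antisymmetry then gives $[g,w_k]_n\big|_a^b=0$ for every $k$, so $[g]\in W^\omega=W$ and therefore $g\in\cD_{{\bf L}}^n$, completing the proof.

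The main obstacle is justifying the identity $W^\omega=W$, which rests on both the non-degeneracy of $\omega$ and the dimension count $\dim\cV=2m$; both ingredients come from the general material in Section \ref{ss-extensions}, namely von Neumann's formula and the definition of the minimal domain. Once the symplectic viewpoint is set up, no further analysis of the differential expression $\ell^n[\fdot]$ is needed. A minor technicality to watch throughout is the consistent use of conjugate antisymmetry of the form to interchange $[f,g]$ and $[g,f]$ when invoking the minimal-domain property from either slot.
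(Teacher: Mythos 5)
Your proposal is correct. Note, however, that the paper itself does not prove this statement: it is quoted verbatim from Naimark \cite[Theorem 18.1.4]{N}, so there is no in-paper argument to match. Your route is the standard symplectic (boundary-space) argument: pass to the $2m$-dimensional quotient $\cD^n\ti{max}/\cD^n\ti{min}$, observe that the boundary form descends to a non-degenerate conjugate-antisymmetric form there (non-degeneracy is exactly Definition \ref{d-min}, and the needed identity $[f,g]=-\overline{[g,f]}$ holds because the coefficients of $\ell^n$ are real), and identify $\cD_{{\bf L}}^n$ with the preimage of the Lagrangian subspace $W$ spanned by the classes $[w_k]$; self-adjointness then follows from $W^{\omega}=W$ together with $({\bf L}^n\ti{min})^*={\bf L}^n\ti{max}$. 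Naimark's own proof is different in flavor: it runs through the unitary-matrix parametrization of Theorem \ref{t-gknmatrix}, i.e.\ it re-expresses the $w_k$ in terms of deficiency elements of $\cD_+$ and $\cD_-$ and builds the unitary $u$ explicitly, whereas your argument never touches deficiency elements beyond the dimension count $\dim\cV=2m$ supplied by Theorem \ref{t-vN} — which is in fact consonant with the paper's stated goal of avoiding explicit deficiency elements. Two small points to make explicit if you write this up: (i) $\cD_{{\bf L}}^n\supseteq\cD^n\ti{min}$ is dense, so $({\bf L}^n)^*$ exists; (ii) the step $W^{\omega}=W$ needs only $W\subseteq W^{\omega}$ (isotropy) plus $\dim W^{\omega}=2m-\dim W=m$ from non-degeneracy, so the remark about maximal isotropic subspaces is not actually required.
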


These two theorems completely answer the question of how boundary conditions can be used to create self-adjoint extensions. Applications of this theory hinge on determining the proper $w_k$'s that will define the domain of the desired self-adjoint extension.

\section{Domains of Sturm--Liouville Operators}\label{s-mindomains}

We begin by discussing domains associated with the Legendre and Jacobi differential operators, with a slightly different perspective on each. The simple results in this section mainly serve as an introduction to the methods of Section \ref{s-powers}, where analysis is more complicated. Some results may also be derived from existing literature, such as \cite[Section 3]{HK}, but methods of proof are different.

\subsection{The Legendre Operator}\label{ss-legendreexample}

As a first example, we consider the classical Legendre differential expression given by
\begin{align}\label{e-legendre}
\ell[f](x)=-((1-x^2)f'(x))'
\end{align}
on the maximal domain
\begin{align}\label{e-legendremax}
\cD\ti{max} &= \{f:(-1,1)\to \CC
\text{ }:\text{ } f, f'\in\text{AC}\ti{loc}(-1,1); f,\ell[f]\in L^2(-1,1)
\}.
\end{align}
This maximal domain defines the associated minimal domain given in Definition \ref{d-min}, and the defect indices are $(2,2)$, with both endpoints in the limit-circle case. The symmetric expression given in equation \eqref{e-legendre} possesses the Legendre polynomials $P_m(x)$, $m\in\NN_0$, as a complete in $L^2(-1,1)$ set of functions $f(x)=P_m(x)$ that satisfy the eigenvalue equation 
\begin{align*}
\ell[f](x)=m(m+1)f(x)
\end{align*}
for each $k$. We also recall that Legendre polynomials \cite[Section 14.3]{DLMF} can be written as:
\begin{align}\label{e-legendrehyper}
P_m(x)=\,_2F_1(-m,m+1;1;(1-x)/2)=\sum_{j=0}^m\left(-\dfrac{1}{2}\right)^j{m \choose j}{m+j \choose j}(1-x)^j,
\end{align}
where $\,_2F_1(a,b;c;x)$ is the Gauss hypergeometric series. Of course, the series can be written with powers of $(1+x)$ using a transformation such as \cite[Equation 15.8.7]{DLMF}. The Legendre polynomial $P_m(x)$ is in $\cD\ti{max}$ for each $m\in\NN_0$.

The associated sesquilinear form is defined, for $f,g\in\cD\ti{max}$, via equation \eqref{e-greens}. Integration by parts easily yields the explicit expression
\begin{align*}
    [f,g](\pm 1)=\lim_{x\to 1^{\mp}}(1-x^2)[f'(x)g(x)-f(x)g'(x)].
\end{align*}
Theorem \ref{t-limits} says that the sesquilinear form is both well-defined and finite for all $f,g\in\cD\ti{max}$. We will use specific choices of the function $g(x)$ to say that this imposes a certain degree of regularity on $f(x)$ and $f'(x)$.

\begin{prop}\label{t-legendremax}
Let $f\in\cD\ti{max}$. Then,
\begin{align*}
    \lim_{x\to \pm 1^{\mp}}(1-x^2)f(x)=0, \text{ and } \lim_{x\to\pm 1^{\mp}}(1-x^2)f'(x) \text{ is finite.}
\end{align*}
\end{prop}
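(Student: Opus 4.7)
The plan is to apply Theorem \ref{t-limits} twice with well-chosen test functions $g \in \cD\ti{max}$, exploiting the explicit form
\[ [f,g](x) = (1-x^2)\bigl[f'(x)g(x) - f(x)g'(x)\bigr] \]
of the sesquilinear bracket for the Legendre expression. Any low-degree polynomial lies in $\cD\ti{max}$, since it is smooth and $\ell[\cdot]$ sends polynomials to polynomials, which belong to $L^2(-1,1)$; these are the only test functions the argument needs.

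First I would take $g \equiv 1$, so that $\ell[g] = 0 \in L^2(-1,1)$ and $g' \equiv 0$. The bracket then collapses to $[f,1](x) = (1-x^2)f'(x)$, and Theorem \ref{t-limits} gives directly that $\lim_{x \to \pm 1^{\mp}}(1-x^2)f'(x)$ exists and is finite. This already handles the second assertion.

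Next I would take $g(x) = x$, which lies in $\cD\ti{max}$ because $\ell[x] = 2x \in L^2(-1,1)$, and which has $g' \equiv 1$. The bracket becomes $[f,x](x) = x(1-x^2)f'(x) - (1-x^2)f(x)$. Theorem \ref{t-limits} guarantees finite one-sided limits at $\pm 1$, and the previous step shows that $x(1-x^2)f'(x)$ itself has finite one-sided limits there. Subtracting, $\lim_{x \to \pm 1^{\mp}}(1-x^2)f(x)$ exists and is finite. To promote finiteness to zero, I would argue by contradiction: if $\lim_{x \to 1^-}(1-x^2)f(x) = L \neq 0$, then $f(x) \sim L/[2(1-x)]$ as $x \to 1^-$, so $|f(x)|^2$ is bounded below by a positive constant multiple of $(1-x)^{-2}$ on a left neighborhood of $1$ and fails to be integrable there, contradicting $f \in L^2(-1,1)$. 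The same reasoning at $-1$ completes the claim.

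The main obstacle is this final upgrade from finite to vanishing. The bracket computations only deliver finiteness; the extra leverage must come from the $L^2$ membership of $f$, and the decisive observation is that the candidate singular profile $L/(1-x^2)$ is very far from being square-integrable near the endpoints, ruling out any nonzero value of $L$. Everything else amounts to routine manipulation of the explicit Green's formula with polynomial test functions.
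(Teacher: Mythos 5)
Your proposal is correct and follows essentially the same route as the paper: test the bracket against the constant function and a degree-one polynomial (you use $g(x)=x$, the paper uses $g(x)=1-x$ together with the observation that $(1-x^2)f'(x)(1-x)\to 0$), then rule out a nonzero limit of $(1-x^2)f(x)$ by an integrability contradiction. The only cosmetic difference is that you contradict $f\in L^2(-1,1)$ directly via the non-integrable profile $(1-x)^{-2}$, while the paper passes to $f\in L^1(-1,1)$ and compares with $(1-x)^{-1}$; both are valid.
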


\begin{proof}
We prove the proposition at the endpoint $1$, and the result at $-1$ will similarly follow. 

Let $f\in\cD\ti{max}$. The function $1\in\cD\ti{max}$ trivially, so
\begin{align*}
[f,1](1)=\lim_{x\to 1^-}(1-x^2)f'(x) \text{ is finite}.
\end{align*}
Hence,
\begin{align}\label{e-imp1higher}
\lim_{x\to 1^-}(1-x^2)f'(x)(1-x)=\lim_{x\to 1^-}(1-x^2)f'(x)\cdot\lim_{x\to 1^-}(1-x)=0.
\end{align}
The function $(1-x)$ is also clearly in $\cD\ti{max}$, so by equation \eqref{e-imp1higher}
\begin{align*}
[f,1-x](1)=\lim_{x\to 1^-}(1-x^2)[-f(x)-f'(x)(1-x)]\approx\lim_{x\to 1^-}(1-x^2)f(x) \text{ is finite.}
\end{align*}
Without loss of generality, let $c$ be a nonzero positive constant such that
\begin{align*}
    c:=\lim_{x\to 1^-}(1-x^2)f(x).
\end{align*}
If $c$ is negative, simply take the absolute value of both sides. Define $r:=c/2$ so that 
\begin{align*}
r<\lim_{x\to 1^-}(1-x^2)f(x).
\end{align*}
Dividing both sides by $(1-x^2)$ yields 
\begin{align}\label{e-dividinglegendre}
\lim_{x\to 1^-}\dfrac{r}{(1-x^2)}<\lim_{x\to 1^-}f(x).
\end{align}
The definition of the maximal domain says that $f\in L^2(-1,1)$, so $f\in L^1(-1,1)$ by the embedding of $L^p$ spaces. Integrating both sides of equation \eqref{e-dividinglegendre} thus yields a contradiction to the fact that $f\in L^1(-1,1)$ by the comparison test. As $c$ was an arbitrary constant, we conclude that $c=0$.
\end{proof}

We now define a class of $C^{\infty}[-1,1]$ functions, for $j\in\NN_0$, by their behavior near the endpoints. We take representative elements of this class, for $j\in\NN_0$, to be denoted by
\begin{equation}\label{e-firstkind}
\begin{aligned}
\f_j^+&:=\left.
\begin{cases}
(1-x)^j, & \text{ for }x \text{ near }1 \\
0, & \text{ for }x \text{ near }-1
\end{cases}
\right\}, \text{ and} \\
\f_j^-&:=\left.
\begin{cases}
0, & \text{ for }x \text{ near }1 \\
(1+x)^j, & \text{ for }x \text{ near }-1
\end{cases}
\right\}.
\end{aligned}
\end{equation}
Note that the functions $\f_0^+$ and $\f_0^-$ simply behave like the function $1$ near the endpoints $1$ and $-1$ respectively. It is also clear that the functions are in the maximal domain $\cD\ti{max}$. This class of functions will be shown to play an important role within the maximal domains associated with powers of the Legendre and Jacobi differential equations.

\begin{cor}\label{t-legendre+}
For $j\geq 1$, the functions $\f_j^+$, $\f_j^-$ belong to the minimal domain associated with the Legendre differential expression \eqref{e-legendre}.
\end{cor}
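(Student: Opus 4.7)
The plan is to directly verify that $\varphi_j^\pm$ satisfies the defining condition of the minimal domain from Definition \ref{d-min}, namely that $[\varphi_j^\pm, g]\big|_{-1}^{1}=0$ for every $g \in \cD\ti{max}$. Membership in $\cD\ti{max}$ has already been observed right after \eqref{e-firstkind}, so only the vanishing of the sesquilinear form at both endpoints requires work. I would first isolate the calculation for $\varphi_j^+$ and then invoke symmetry for $\varphi_j^-$.

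For $\varphi_j^+$, the endpoint contribution at $-1$ is automatic: since $\varphi_j^+$ and its derivative vanish identically in a neighborhood of $-1$, the explicit formula $[f,g](-1)=\lim_{x\to -1^+}(1-x^2)[f'(x)g(x)-f(x)g'(x)]$ gives $[\varphi_j^+,g](-1)=0$ regardless of $g$. The substantive work lies at the endpoint $+1$, where $\varphi_j^+(x)=(1-x)^j$ and $(\varphi_j^+)'(x)=-j(1-x)^{j-1}$. Substituting, the quantity to control is
$$\lim_{x\to 1^-}\bigl[-j(1-x^2)(1-x)^{j-1}g(x)\;-\;(1-x^2)(1-x)^{j}g'(x)\bigr].$$
The idea is to peel off one factor of $(1-x)$ so that each term becomes a product of a bounded/vanishing power against one of the two quantities whose limiting behavior at $1$ is controlled by Proposition \ref{t-legendremax}.

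Concretely, rewrite the first term as $(1-x)^{j-1}\cdot[(1-x^2)g(x)]$; Proposition \ref{t-legendremax} says the bracketed factor tends to $0$, while for $j\geq 1$ the factor $(1-x)^{j-1}$ stays bounded as $x\to 1^-$, so the product vanishes. Rewrite the second term as $(1-x)^j\cdot[(1-x^2)g'(x)]$; Proposition \ref{t-legendremax} says the bracketed factor has a finite limit, and $(1-x)^j\to 0$ for $j\geq 1$, so again the product vanishes. Combining these, $[\varphi_j^+,g](1)=0$, and therefore $[\varphi_j^+,g]\big|_{-1}^{1}=0$ for every $g\in\cD\ti{max}$, placing $\varphi_j^+$ in $\cD\ti{min}$.

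The argument for $\varphi_j^-$ is strictly symmetric: the contribution at $+1$ vanishes because $\varphi_j^-$ is identically zero there, and the analogous two-term computation at $-1$ uses the same factorization $(1-x^2)=(1-x)(1+x)$, splitting one power of $(1+x)$ off the weight and invoking Proposition \ref{t-legendremax} at the endpoint $-1$ to kill both terms. I do not anticipate a real obstacle; the proof is essentially careful bookkeeping, and the restriction $j\geq 1$ is exactly what is needed so that after transferring one $(1-x)$ (respectively $(1+x)$) into the weight $(1-x^2)$, the remaining power is still nonnegative and hence bounded as $x$ approaches the endpoint.
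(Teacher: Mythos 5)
Your proposal is correct and follows essentially the same route as the paper: both verify the Definition \ref{d-min} condition by expanding the sesquilinear form at $+1$ and invoking the two conclusions of Proposition \ref{t-legendremax} (the term with $(1-x^2)g$ vanishes, the term with $(1-x^2)g'$ is finite and gets killed by the surviving power of $(1-x)$), with the endpoint $-1$ trivial and $\f_j^-$ handled by symmetry. The only cosmetic difference is that you place $\f_j^+$ in the first slot of the form while the paper writes $[f,(1-x)^j]$, which changes nothing.
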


\begin{proof}
The definition of the minimal domain (Definition \ref{d-min}) means the theorem is equivalent to showing that 
\begin{align*}
\left[f,\f_j^+\right]\bigg|_{-1}^1=0, \quad \forall f\in\cD\ti{max}.
\end{align*}
Hence, for all $f\in\cD\ti{max}$ and $j\geq 1$,
\begin{align*}
\left[ f,(1-x)^j\right](1)=\lim_{x\to 1^-}(1-x^2)\left\{f(x)\left[-j(1-x)^{j-1}\right]-f'(x)(1-x)^j\right\}.
\end{align*}
The two conclusions of Proposition \ref{t-legendremax} together imply that the limit is 0, as desired.
The result follows for the functions $\f_j^-$ analogously.
\end{proof}

The characterizations of the maximal domain in Proposition \ref{t-legendremax} and the minimal domain in Corollary \ref{t-legendre+} are related. If functions in the maximal domain possess only a certain amount of blow-up, then the minimal domain will include functions with regularity sufficient to eliminate such singularities. 

In particular, the maximal domain is rarely rewritten in the literature (see e.g.~\cite[Theorem 10.1]{ELM}, and \cite{HK,P2}), but to the best knowledge of the authors, the properties in Proposition \ref{t-legendremax} are new. Indeed, investigations of the maximal domain are usually only done for the purpose of finding specific self-adjoint descriptions. This goal becomes much easier when additional properties of the maximal and minimal domains are known.

In order to show this, consider the self-adjoint operator ${\bf T}$ generated via expression \eqref{e-legendre} that contains the Legendre polynomials (see e.g.~\cite{LW02, LW15, LWOG}). The domain of this operator can be written as 
\begin{align*}
\cD_{\bf T}=\left\{f\in\cD\ti{max}~:~[f,1](\pm 1)=0\right\}.
\end{align*}
 
The deficiency indices of $\ell$ are well-known to be $(2,2)$ so two boundary conditions are imposed, one at each endpoint \cite{BEZ, EKLWY}. There are many characterizations of the operator $T$ in  literature, but the following one is particularly insightful for us.

\begin{lem}[{\cite[Theorem 7.1]{ELM}}]\label{t-AC} 
Let $f\in\cD\ti{max}$. Then $f\in\cD\ti{\bf T}$ if and only if $f\in AC[-1,1]$. In particular, $f\in\cD\ti{\bf T}$ implies
\begin{align*}
    \lim_{x\to\pm 1^{\mp}}f(x) \text{ is finite}.
\end{align*}
\end{lem}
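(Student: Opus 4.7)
The plan is to exploit the fact that the boundary condition defining $\cD_{\bf T}$ is precisely $\lim_{x\to\pm 1^\mp}(1-x^2)f'(x)=0$, and combine it with the identity $((1-x^2)f'(x))' = -\ell[f](x)$ obtained by unpacking the Legendre expression. Since $\ell[f]\in L^2(-1,1)$ for every $f\in\cD\ti{max}$, integrating from an interior point $c\in(-1,1)$ shows that
\begin{align*}
(1-x^2)f'(x) = (1-c^2)f'(c) - \int_c^x \ell[f](t)\,dt,
\end{align*}
reducing the boundary behavior of $(1-x^2)f'$ to an integral-theoretic question about $\ell[f]$.

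For the forward implication, I would use the vanishing of $(1-x^2)f'(x)$ as $x\to 1^-$ to fix the constant in this identity, yielding $(1-s^2)f'(s)=\int_s^1 \ell[f](t)\,dt$ for $s$ near $1$. The Cauchy--Schwarz inequality then gives $|(1-s^2)f'(s)|\le \|\ell[f]\|_{L^2(s,1)}\sqrt{1-s}$, and hence $|f'(s)|\le C\cdot\|\ell[f]\|_{L^2(s,1)}/\sqrt{1-s}$ near $s=1$. Because $(1-s)^{-1/2}$ is integrable near $s=1$ and $\|\ell[f]\|_{L^2(s,1)}\to 0$, this forces $f'\in L^1$ in a one-sided neighborhood of $1$. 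Repeating at $x=-1$ gives $f'\in L^1(-1,1)$; together with $f\in \text{AC}\ti{loc}(-1,1)$ inherited from $\cD\ti{max}$, this upgrades $f$ to $AC[-1,1]$, from which the finiteness of the endpoint limits is automatic.

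For the converse, suppose $f\in\cD\ti{max}\cap AC[-1,1]$. Then $f'\in L^1(-1,1)$, while Proposition \ref{t-legendremax} guarantees that $L:=\lim_{x\to 1^-}(1-x^2)f'(x)$ exists and is finite. A nonzero $L$ would force $f'(x)$ to behave like a nonzero multiple of $1/(1-x)$ near $x=1$, contradicting $f'\in L^1$; hence $L=0$, and the symmetric argument at $-1$ gives $[f,1](\pm 1)=0$, so $f\in\cD_{\bf T}$. The main technical point is the Cauchy--Schwarz step in the forward direction: the exponent $1/2$ sits just inside the $L^1$ range, so the estimate is tight but sufficient, and it is what ultimately delivers the absolute continuity at the endpoints rather than merely the existence of finite boundary values.
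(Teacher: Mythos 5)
Your proposal is correct. Note that the paper does not prove Lemma \ref{t-AC} at all --- it imports the statement by citation from \cite[Theorem 7.1]{ELM} --- so there is no internal proof to compare against; what you have written is a self-contained replacement. Your route is the natural one: since $w\equiv 1$ for Legendre, $((1-x^2)f')'=-\ell[f]\in L^2(-1,1)\subset L^1(-1,1)$, so the first integral $(1-s^2)f'(s)=(1-c^2)f'(c)-\int_c^s\ell[f]$ is legitimate, and once the boundary condition $[f,1](\pm1)=\lim(1-x^2)f'(x)=0$ fixes the constant, Cauchy--Schwarz gives $|f'(s)|\le C\,\|\ell[f]\|_{L^2(s,1)}(1-s)^{-1/2}$ near $1$, hence $f'\in L^1(-1,1)$ and $f\in AC[-1,1]$, with the finite endpoint limits following immediately. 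The converse is also sound: the existence of $L=\lim_{x\to 1^-}(1-x^2)f'(x)$ is supplied either by Proposition \ref{t-legendremax} (as you invoke) or directly by your integral identity, and a nonzero $L$ would force $|f'(x)|\gtrsim (1-x)^{-1}$ near $1$, contradicting $f'\in L^1$. A pleasant by-product of your argument, worth noting, is that the integral identity re-derives the finiteness part of Proposition \ref{t-legendremax} for $(1-x^2)f'$ without the sesquilinear-form manipulations, whereas the paper's Section 3 obtains that regularity by testing the form against $1$ and $(1-x)$; the two mechanisms are consistent, and yours additionally quantifies the decay rate, which is exactly what upgrades "finite boundary value of $(1-x^2)f'$'' to absolute continuity of $f$ up to the endpoints.
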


This condition is comparable to that of Proposition \ref{t-legendremax}. Indeed, the juxtaposition of these two results is one of the central themes of our investigation of domains for higher order operators. The maximal domain will ensure that some limits of derivatives are finite when a certain degree of regularity is present. On the other hand, inclusion in the left-definite domain ensures that limits of the same derivatives with \emph{half} as much regularity are finite. The distinction between which limits are 0 and which are finite will be the center of discussion in Section \ref{s-equivalence}.

For contrast, we define the domain
\begin{align}
\cD\ti{{\bf L}(m)}=\left\{ f\in\cD\ti{max}~:~[f,P_m](\pm 1)=0\right\},
\end{align}
where $P_m$ is the $m$th Legendre polynomial, $m\in\NN_0$. The following surprising theorem can be formulated.

\begin{theo}\label{t-anylegendre}
The self-adjoint operators ${\bf T}=\{\ell,\cD\ti{\bf T}\}$ and ${{\bf L}(m)}=\{\ell,\cD\ti{{\bf L}(m)}\}$ are equal.
\end{theo}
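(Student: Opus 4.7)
The plan is to reduce the condition $[f,P_m](\pm 1)=0$ to $[f,1](\pm 1)=0$ by expanding $P_m$ in powers of $(1\mp x)$ around each endpoint and using Corollary \ref{t-legendre+} to discard all higher-order terms. The guiding intuition is that the only component of $P_m$ seen by the endpoint form at $\pm 1$ that does not already vanish on $\cD\ti{max}$ is the constant coefficient in its local Taylor expansion.

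First I would write $P_m$ globally as a polynomial in $(1-x)$, namely $P_m(x)=\sum_{j=0}^m c_j(1-x)^j$ with $c_0=P_m(1)=1$, as furnished by \eqref{e-legendrehyper}. The symmetry $P_m(-x)=(-1)^m P_m(x)$ (or the connection formula \cite[Equation 15.8.7]{DLMF}) gives the parallel expansion $P_m(x)=\sum_{j=0}^m d_j(1+x)^j$ with $d_0=P_m(-1)=(-1)^m$. Since each power $(1\mp x)^j$ lies in $\cD\ti{max}$, the Wronskian formula \eqref{e-mwronskian} together with Theorem \ref{t-limits} legitimizes the termwise splits
\[
[f,P_m](1)=\sum_{j=0}^m c_j\,[f,(1-x)^j](1),\qquad [f,P_m](-1)=\sum_{j=0}^m d_j\,[f,(1+x)^j](-1).
\]

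The central step is to show that each term with $j\ge 1$ drops out. The formula \eqref{e-mwronskian} makes plain that $[f,g](\pm 1)$ depends only on the values of $g$ and $g'$ near the corresponding endpoint. Hence, for $j\ge 1$, the function $(1-x)^j$ coincides with the cutoff $\f_j^+$ near $1$, giving $[f,(1-x)^j](1)=[f,\f_j^+](1)$. Corollary \ref{t-legendre+} places $\f_j^+$ in $\cD\ti{min}$, so $[f,\f_j^+]\big|_{-1}^{1}=0$; combined with $\f_j^+\equiv 0$ near $-1$, which forces $[f,\f_j^+](-1)=0$, we deduce $[f,(1-x)^j](1)=0$. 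Running the same argument at $-1$ with $\f_j^-$ eliminates the terms $[f,(1+x)^j](-1)$ for $j\ge 1$. Only the $j=0$ contributions remain, yielding $[f,P_m](1)=c_0[f,1](1)=[f,1](1)$ and $[f,P_m](-1)=d_0[f,1](-1)=(-1)^m[f,1](-1)$. Since both scalars are nonzero, the systems $[f,P_m](\pm 1)=0$ and $[f,1](\pm 1)=0$ cut out the same subspace of $\cD\ti{max}$, whence $\cD\ti{{\bf L}(m)}=\cD_{\bf T}$ and the two self-adjoint operators coincide.

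I do not foresee a serious obstacle: the argument is essentially linear algebra in the endpoint boundary data, resting entirely on Corollary \ref{t-legendre+} to kill the non-constant terms of the expansion of $P_m$. The one delicate piece of bookkeeping is the locality assertion that $[f,\cdot](\pm 1)$ is insensitive to modifications of its second argument away from the corresponding endpoint, but this is immediate from the explicit Wronskian form \eqref{e-mwronskian} and is what makes the whole reduction work.
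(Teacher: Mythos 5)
Your proposal is correct and follows essentially the same route as the paper: expand $P_m$ in powers of $(1\mp x)$ at each endpoint, use linearity of the form and Corollary \ref{t-legendre+} to kill all terms with $j\ge 1$, leaving only a nonzero multiple of $[f,1](\pm 1)$. Your observation that the nonvanishing of the constant coefficients makes the two boundary-condition systems equivalent just packages the paper's two inclusions into one step.
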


\begin{proof}
The definition is equation \eqref{e-legendrehyper} says $P_m(x)$, $m\in\NN_0$, can be decomposed as
\begin{align*}
P_m(x)=\sum_{j=0}^m a_j(1-x)^j,
\end{align*}
where $a_j\in\RR$ are constants. The case $m=0$ follows trivially. Fix $m\in\NN$ and let $f\in\cD\ti{\bf T}$. Then, by linearity of the sesquilinear form,
\begin{align*}
[f,P_m](1)=\sum_{j=0}^m a_j[f,(1-x)^j](1)=a_0[f,1](1)=0.
\end{align*}
All but one term vanished due to the fact that $\f_j^+$, $j\geq 1$, is in the minimal domain by Corollary \ref{t-legendre+}. The final equality holds because of the definition of $\cD\ti{\bf T}$.
Similarly, if we write equation \eqref{e-legendrehyper} in terms of $(1+x)$ with new constants $b_j$ then
\begin{align*}
[f,P_m](-1)=\sum_{j=0}^m b_j[f,(1+x)^j](-1)=b_0[f,1](-1)=0.
\end{align*}
Hence, $f\in\cD_{{\bf L}(n)}$. The reverse inclusion follows analogously, and the theorem is proven.
\end{proof}

It should be noted that Theorem \ref{t-anylegendre} is not new to the literature, although it's proof uses the new properties of the maximal domain and is more efficient. The result appears in a previous paper of the authors \cite{FFL} that focused on which GKN boundary conditions can be utilized to describe the left-definite domain. Therein, a constructive approach is applied to determine that even for the square and cube of the Legendre operator, an appropriate amount of any Legendre polynomials can be chosen as GKN boundary conditions and still yield the left-definite domain.

The Legendre differential expression is a special case of the Jacobi differential expression, with $\al=\beta=0$. It is therefore natural to consider whether the previous results hold for more general parameters.

\subsection{The Jacobi Operator}\label{ss-jacobiexample}

 Let $0\leq\al,\beta<1$, and consider the classical Jacobi differential expression given by
\begin{align}\label{e-jacobi}
\ell_{\al,\beta}[f](x)=-\dfrac{1}{(1-x)^{\al}(1+x)^{\beta}}[(1-x)^{\al+1}(1+x)^{\beta+1}f'(x)]'
\end{align}
on the maximal domain
\begin{align*}
\cD^{(\al,\beta)}\ti{max}=\{ f\in L^2_{\al,\beta}(-1,1) ~|~ f,f'\in AC\ti{loc};\ell_{\al,\beta}[f]\in L^2_{\al,\beta}(-1,1)\},
\end{align*}
where the Hilbert space $L^2_{\al,\beta}(-1,1):=L^2\left[(-1,1),(1-x)^{\al}(1+x)^{\beta}\right]$. This maximal domain defines the associated minimal domain given in Definition \ref{d-min}, and the defect indices are $(2,2)$. The specified values of $\al,\beta$ will ensure that the differential expression is in the limit circle case at both endpoints, and so are assumed throughout. If either parameter is equal to or larger than 1, then all of our conclusions still hold, but some boundary conditions will be satisfied trivially. If either are less than 0, the corresponding endpoint is regular and although it still requires a boundary condition, these are much simpler and don't need to be in the GKN format. See \cite{BEZ, EKLWY, Z} for more details.

The the Jacobi polynomials $P_m^{(\al,\beta)}(x)$, $m\in\NN_0$, are complete in $L^2_{\al,\beta}(-1,1)$ set for which $f(x)=P_m^{(\al,\beta)}(x)$ solves the eigenvalue equation of the symmetric expression given in equation \eqref{e-jacobi}, that is:
\begin{align*}
\ell_{\al,\beta}[f](x)=m(m+\al+\beta+1)f(x)
\end{align*}
for each $m$. We also recall that Jacobi polynomials \cite[Section 18.5]{DLMF} can be written as:
\begin{align}\label{e-jacobihyper}
P_m^{(\al,\beta)}(x)&=\dfrac{(\al+1)_m}{m!}\,_2F_1(-m,m+\al+\beta+1;\al+1;(1-x)/2) \nonumber \\
&=\sum_{j=0}^m\left(-\dfrac{1}{2}\right)^j\dfrac{(m+\al+\beta+1)_j(\al+j+1)_{m-j}}{m!(m-j)!}(1-x)^j,
\end{align}
where $\,_2F_1(a,b;c;x)$ is the Gauss hypergeometric series and $(\cdot)_j$ denotes the rising Pochhammer symbol. Of course, the series can be written with powers of $(1+x)$ using a transformation such as \cite[Equation 15.8.7]{DLMF}. The Jacobi polynomial $P_m^{(\al,\beta)}(x)$ is in $\cD^{(\al,\beta)}\ti{max}$ for each $m\in\NN_0$.

The associated sesquilinear form is defined, for $f,g\in\cD\ti{max}^{(\al,\beta)}$, via equation \eqref{e-greens}. Integration by parts easily yields the explicit expression
\begin{align*}
[f,g]_1(\pm 1):=\lim_{x\to\pm 1^{\mp}}(1-x)^{\al+1}(1+x)^{\beta+1}[f'(x)g(x)-f(x)g'(x)].
\end{align*}
Note that the dependence of the sesquilinear form on the parameters $\al$ and $\beta$ is suppressed in the definition for the sake of notation.

Theorem \ref{t-limits} says that the sesquilinear form is both well-defined and finite for all $f,g\in\cD\ti{max}^{(\al,\beta)}$. We will use specific choices of the function $g(x)$ to say that this imposes a certain degree of regularity on $f(x)$ and $f'(x)$.
 
\begin{prop}\label{t-jacobimax}
Let $f\in\cD\ti{max}^{(\al,\beta)}$. Then,
\begin{align*}
    \lim_{x\to \pm 1^{\mp}}(1-x)^{\al+1}(1+x)^{\beta+1}f(x)=0, \text{ and } \lim_{x\to\pm 1^{\mp}}(1-x)^{\al+1}(1+x)^{\beta+1}f'(x) \text{ is finite}.
\end{align*}
\end{prop}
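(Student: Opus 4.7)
The plan is to mimic the structure of the proof of Proposition \ref{t-legendremax} almost verbatim, taking advantage of the fact that Theorem \ref{t-limits} guarantees finiteness of the sesquilinear form $[f,g]_1(\pm 1)$ whenever $f,g\in\cD_{\max}^{(\alpha,\beta)}$. As in the Legendre case, I will only work at the endpoint $+1$; the argument at $-1$ is completely symmetric with $1+x$ and $1-x$ interchanged.

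First I would verify that the two test functions $g=1$ and $g=1-x$ both lie in $\cD_{\max}^{(\alpha,\beta)}$: both are smooth, their $L^2_{\alpha,\beta}$-integrability is immediate since the weight $(1-x)^{\alpha}(1+x)^{\beta}$ is integrable on $(-1,1)$ for $\alpha,\beta\in[0,1)$, and a direct computation shows that $\ell_{\alpha,\beta}[1]=0$ and $\ell_{\alpha,\beta}[1-x]$ is a bounded polynomial, so both $\ell_{\alpha,\beta}$-images lie in $L^2_{\alpha,\beta}$. Once these are in hand, plugging $g=1$ into the explicit form of $[f,g]_1(1)$ kills the $f(x)g'(x)$ term and yields
\[
[f,1]_1(1)=\lim_{x\to 1^-}(1-x)^{\alpha+1}(1+x)^{\beta+1}f'(x),
\]
which Theorem \ref{t-limits} forces to be finite. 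This is the second assertion of the proposition.

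Next I would use $g=1-x$. Writing out the limit gives
\[
[f,1-x]_1(1)=\lim_{x\to 1^-}(1-x)^{\alpha+1}(1+x)^{\beta+1}\bigl[f'(x)(1-x)+f(x)\bigr].
\]
The first term vanishes because $(1-x)^{\alpha+1}(1+x)^{\beta+1}f'(x)$ was just shown to have a finite limit, so its product with the additional factor $(1-x)$ tends to $0$. Hence, modulo constants, $\lim_{x\to 1^-}(1-x)^{\alpha+1}(1+x)^{\beta+1}f(x)$ is finite — call this limit $c$.

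The final step is to rule out $c\neq 0$ by the same $L^1$ comparison argument used for Legendre, except that now we exploit $L^2_{\alpha,\beta}\subset L^1_{\alpha,\beta}$, which follows from Cauchy--Schwarz together with the integrability of the weight $(1-x)^{\alpha}(1+x)^{\beta}$ on $(-1,1)$. If $|c|>0$, then for $x$ sufficiently close to $1$ we have
\[
|f(x)|\,(1-x)^{\alpha}(1+x)^{\beta}\;\gtrsim\;\frac{|c|/2}{(1-x)(1+x)},
\]
and the right-hand side is not integrable at $x=1$, contradicting $f\in L^1_{\alpha,\beta}(-1,1)$. Therefore $c=0$, which gives the first assertion. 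The only step that really needs any care is verifying that both test functions belong to $\cD_{\max}^{(\alpha,\beta)}$; everything else is a transparent translation of the Legendre argument, with the weight $(1-x^2)$ replaced by $(1-x)^{\alpha+1}(1+x)^{\beta+1}$ and the Lebesgue measure replaced by the Jacobi weight. I do not expect any genuine obstacle here.
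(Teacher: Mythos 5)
Your proposal is correct and follows essentially the same route as the paper's own proof: test against $g=1$ and $g=1-x$ in the sesquilinear form (whose finiteness comes from Theorem \ref{t-limits}), and then eliminate a nonzero limit of $(1-x)^{\al+1}(1+x)^{\beta+1}f(x)$ by the $L^1_{\al,\beta}$ comparison argument, using $L^2_{\al,\beta}\subset L^1_{\al,\beta}$. The only cosmetic difference is that you spell out the membership of the test functions in $\cD\ti{max}^{(\al,\beta)}$ and keep the $(1+x)^{\beta+1}$ factor explicit, whereas the paper treats these as immediate; no gap either way.
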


The proof is analogous to that of Proposition \ref{t-legendremax}, but is repeated here because it will be used as a base case for induction in the proof of Theorem \ref{t-maxdomaindecomp}.

\begin{proof}
Consider the endpoint $1$.
Let $f\in\cD\ti{max}^{(\al,\beta)}$. The function $1\in\cD\ti{max}^{(\al,\beta)}$ trivially, so
\begin{align*}
[f,1]_1(1)=\lim_{x\to 1^-}(1-x)^{\al+1}f'(x) \text{ is finite}.
\end{align*}
Hence, 
\begin{align}\label{e-jimp1}
\lim_{x\to 1^-}(1-x)^{\al+1}f'(x)(1-x)=\lim_{x\to 1^-}(1-x)^{\al+1}f'(x)\cdot\lim_{x\to 1^-}(1-x)=0.
\end{align}
The function $(1-x)$ is also clearly in $\cD\ti{max}^{(\al,\beta)}$, so by equation \eqref{e-jimp1}
\begin{align*}
[f,1-x]_1(1)=\lim_{x\to 1^-}(1-x)^{\al+1}[-f(x)-f'(x)(1-x)]\approx\lim_{x\to 1^-}(1-x)^{\al+1}f(x) \text{ is finite}.
\end{align*}
Without loss of generality, let $c$ be a nonzero positive constant such that
\begin{align*}
    c:=\lim_{x\to 1^-}(1-x)^{\al+1}f(x).
\end{align*}
If $c$ is negative, simply take the absolute value of both sides. Define $r:=c/2$ so that 
\begin{align*}
r<\lim_{x\to 1^-}(1-x)^{\al+1}f(x).
\end{align*}
Dividing both sides by $(1-x)$ yields 
\begin{align}\label{e-dividingjacobi}
\lim_{x\to 1^-}\dfrac{r}{(1-x)}<\lim_{x\to 1^-}(1-x)^{\al}f(x).
\end{align}

The definition of the maximal domain says that $f\in L^2_{\al,\beta}(-1,1)$, so $f\in L^1_{\al,\beta}(-1,1)$ by the embedding of $L^p$ spaces. Integrating both sides of equation \eqref{e-dividingjacobi} thus yields a contradiction to the fact that $f\in L^1_{\al,\beta}(-1,1)$ by the comparison test. As $c$ was an arbitrary constant, we conclude that $c=0$.
The results at the endpoint $-1$ similarly follow.
\end{proof}

\begin{cor}\label{t-jacobi1}
For $j\geq 1$, the functions $\f_j^+$ and $\f_j^-$ belong to the minimal domain associated with the Jacobi differential expression \eqref{e-jacobi}. 
\end{cor}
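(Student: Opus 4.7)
The plan is to mimic the proof of Corollary \ref{t-legendre+} essentially verbatim, swapping in the Jacobi weight and invoking Proposition \ref{t-jacobimax} in place of Proposition \ref{t-legendremax}. By Definition \ref{d-min}, membership of $\f_j^\pm$ in the minimal domain is equivalent to the vanishing of the sesquilinear form $[f,\f_j^\pm]_1\big|_{-1}^{1}$ for every $f\in\cD\ti{max}^{(\al,\beta)}$; since $\f_j^+$ vanishes in a neighborhood of $-1$ and $\f_j^-$ vanishes in a neighborhood of $1$, we only need to check one endpoint in each case.

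Focusing on $\f_j^+$ at $x=1$, I would substitute $(1-x)^j$ into the explicit Wronskian form of the sesquilinear form:
\begin{align*}
[f,\f_j^+]_1(1) &= \lim_{x\to 1^-}(1-x)^{\al+1}(1+x)^{\beta+1}\bigl[f'(x)(1-x)^j - f(x)(-j)(1-x)^{j-1}\bigr].
\end{align*}
The first summand is the product of $(1-x)^{\al+1}(1+x)^{\beta+1}f'(x)$, which is finite by Proposition \ref{t-jacobimax}, with $(1-x)^j\to 0$ (using $j\geq 1$); hence it tends to $0$. The second summand factors as $j(1+x)\cdot(1-x)^{j-1}\cdot(1-x)^{\al+1}(1+x)^{\beta+1}f(x)$. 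Here $(1-x)^{\al+1}(1+x)^{\beta+1}f(x)\to 0$ by Proposition \ref{t-jacobimax}, and the remaining factor $j(1+x)(1-x)^{j-1}$ is bounded on a neighborhood of $1$ for $j\geq 1$ (equal to $2j$ when $j=1$ and tending to $0$ when $j\geq 2$). So the second summand also vanishes in the limit.

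The argument at $x=-1$ for $\f_j^-$ is entirely symmetric: replace $(1-x)$ by $(1+x)$ and use the corresponding statements of Proposition \ref{t-jacobimax} at $-1$. I expect no real obstacle here, since all the analytic content is already packed into Proposition \ref{t-jacobimax}; this corollary is essentially just bookkeeping, combining the fact that ``maximal-domain blow-up is killed by one factor of $(1-x)$'' with the observation that the derivative of $(1-x)^j$ introduces exactly such a factor when $j\geq 1$. The only mild point to be careful about is keeping track of which factors are finite versus which tend to zero, so that the product in the sesquilinear form really does go to zero rather than being of indeterminate form.
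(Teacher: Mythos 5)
Your proposal is correct and is exactly the paper's argument: the paper proves this corollary by running the proof of Corollary \ref{t-legendre+} verbatim with the Jacobi weight $(1-x)^{\al+1}(1+x)^{\beta+1}$ and with Proposition \ref{t-jacobimax} supplying the two limit facts, just as you do. The only blemish is a harmless slip where you factor the second summand with an extra $(1+x)$ that is not actually there; since that factor is bounded near $x=1$ it does not affect the conclusion that both summands vanish in the limit.
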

\begin{proof}
The proof is analogous to that of Corollary \ref{t-legendre+}, but relying on Proposition \ref{t-jacobimax} instead of Proposition \ref{t-legendremax}.
\end{proof}

It is somewhat surprising that the parameters $\al$ and $\beta$ don't distinguish Corollary \ref{t-jacobimax} from Corollary \ref{t-legendremax} in the Legendre case. The range of the parameters seems to be the culprit, as $0\leq\al,\beta<1$. Hence, when the maximal domain is considered in the basis of Jacobi polynomials at each endpoint, the powers of $(1-x)$ and $(1+x)$ included in the minimal domain should be higher than $\al$ and $\beta$, respectively. In the Legendre case we have $\al=\beta=0$, so only the function $1$ is outside the minimal domain. This idea of a rough polynomial \emph{boundary} that separates the minimal from the maximal domain also reflects the duality described between the two operators mentioned after Corollary \ref{t-legendre+}. A proof of the sharpness of this boundary is still outstanding.

Now, consider the self-adjoint operator ${\bf J}^{(\al,\beta)}$ generated via expression \eqref{e-jacobi} that contains all of the Jacobi polynomials, $P_m^{(\al,\beta)}$. The domain of this operator can be written \cite[Section 3]{EKLWY} as 
\begin{align*}
  \cD_{{\bf J}^{(\al,\beta)}}=\left\{f\in\cD\ti{max}^{(\al,\beta)}~:~[f,1]_1(\pm 1)=0\right\}.
\end{align*}
Following the convention of \cite{FFL}, the deficiency indices of $\ell_{\al,\beta}$ are $(1,1)$, so one boundary condition is imposed. The domain has a property that represents a generalization of Lemma \ref{t-AC}.

\begin{lem}\cite[Section 3]{EKLWY}\label{t-stronglimitpoint}
The domain $\cD_{{\bf J}^{(\al,\beta)}}$ possesses the strong limit-point condition. That is, for all $f\in\cD_{{\bf J}^{(\al,\beta)}}$, we have
\begin{align*}
\lim_{x\to\pm 1^{\mp}}(1-x)^{\al+1}(1+x)^{\beta+1}f(x)\overline{g}'(x)=0,
\end{align*}
for all $g\in\cD_{{\bf J}^{(\al,\beta)}}$.
\end{lem}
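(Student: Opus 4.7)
The plan is to establish the strong limit-point condition at the endpoint $x=1$; the argument at $x=-1$ will be identical with $\beta$ in place of $\al$. The strategy is to control each factor of the product separately: I will show that $\overline{g}'(x)$ decays strictly faster than $[(1-x)^{\al+1}(1+x)^{\beta+1}]^{-1}$ grows, while $f(x)$ remains bounded as $x\to 1^-$. These two facts should combine to give the desired vanishing.

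For the first step, set $G(x) := (1-x)^{\al+1}(1+x)^{\beta+1} g'(x)$. The explicit form of the Jacobi expression gives $G'(x) = -(1-x)^{\al}(1+x)^{\beta}\,\ell_{\al,\beta}[g](x)$, and since $g\in\cD\ti{max}^{(\al,\beta)}$ we have $\ell_{\al,\beta}[g]\in L^2_{\al,\beta}(-1,1)$, which translates to $\int_c^1 |G'(t)|^2 (1-t)^{-\al}(1+t)^{-\beta}\,dt < \infty$. The GKN boundary condition $[g,1]_1(1)=0$ forces $\lim_{x\to 1^-} G(x)=0$, so writing $G(c) = -\int_c^1 G'(t)\,dt$ and applying Cauchy--Schwarz with splitting weight $(1-t)^\al(1+t)^\beta$ would yield
$$|G(c)|^2 \le \left(\int_c^1 \frac{|G'(t)|^2}{(1-t)^\al(1+t)^\beta}\,dt\right)\left(\int_c^1 (1-t)^\al(1+t)^\beta\,dt\right).$$
The first factor tends to $0$ as $c\to 1^-$ by absolute continuity of the integral, and the second is $O((1-c)^{\al+1})$. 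I would conclude that $|G(c)| = o((1-c)^{(\al+1)/2})$ and, by identical reasoning applied to $f$, that $(1-x)^{\al+1}(1+x)^{\beta+1} f'(x) = o((1-x)^{(\al+1)/2})$ near $x=1$.

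The second step would promote this estimate to a bound on $f$ itself. Rearranging the preceding estimate gives $f'(x) = o((1-x)^{-(\al+1)/2})$ as $x\to 1^-$. Because $\al\in[0,1)$, the exponent $(\al+1)/2$ is strictly less than $1$, so $(1-t)^{-(\al+1)/2}$ is integrable on $[0,1)$; it follows that $f(x) = f(0) + \int_0^x f'(t)\,dt$ has a finite limit as $x\to 1^-$, and hence $f$ is bounded on some left neighborhood of $1$.

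Combining these pieces, the product factors as $(1-x)^{\al+1}(1+x)^{\beta+1} f(x) \overline{g}'(x) = f(x)\,\overline{G(x)}$, a bounded factor times $\overline{G(x)}\to 0$; hence the product tends to $0$, as required. I expect the central obstacle to be obtaining the boundedness of $f$, and this is precisely where the hypothesis $\al,\beta\in[0,1)$ becomes indispensable: at $\al=1$ the exponent $(\al+1)/2$ equals $1$, the integrability of $f'$ fails, and the argument collapses.
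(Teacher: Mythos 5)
Your argument is correct, but it cannot be compared line-by-line with the paper, because the paper does not prove this lemma at all: it is quoted from \cite[Section 3]{EKLWY}, where the strong limit-point (and Dirichlet) properties of the Jacobi expression are obtained as part of the left-definite analysis. Your route is a direct, self-contained one: writing $G(x)=(1-x)^{\al+1}(1+x)^{\beta+1}g'(x)$, noting $G'=-(1-x)^{\al}(1+x)^{\beta}\ell_{\al,\beta}[g]$, using the boundary condition $[g,1]_1(1)=0$ to get $G(1^-)=0$, and then Cauchy--Schwarz with the weight split $(1-t)^{\al/2}(1+t)^{\beta/2}$ to obtain $|G(c)|=o((1-c)^{(\al+1)/2})$; applying the same estimate to $f$ gives $f'=o((1-x)^{-(\al+1)/2})$, whose integrability (here $0\le\al<1$ is exactly what is needed) makes $f$ bounded near $1$, so the product $f\cdot\overline{G}\to 0$. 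All steps check out, including the identification $[g,1]_1(1)=\lim_{x\to1^-}G(x)$ and the symmetry of the argument at $-1$. What your approach buys is a stronger intermediate fact proved with elementary tools: every $f\in\cD_{{\bf J}^{(\al,\beta)}}$ has a finite limit at each endpoint, which is the Jacobi analogue of Lemma \ref{t-AC} and fits the maximal-versus-self-adjoint ``half regularity'' duality the paper emphasizes; it is also very much in the spirit of the sesquilinear-form manipulations used in Proposition \ref{t-jacobimax}, but applied to the restricted domain rather than the maximal one. Two cosmetic remarks: the $o(\cdot)$ refinement is not actually needed (an $O((1-c)^{(\al+1)/2})$ bound already gives integrability of $f'$, and the final limit only uses $G\to0$ together with boundedness of $f$), and your closing claim that the argument ``collapses'' at $\al=1$ only shows this particular proof fails there, not the condition itself (for $\al\ge1$ the endpoint is limit-point and the strong limit-point property is known by other means).
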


It should be noted that Lemma \ref{t-stronglimitpoint} also applies to the Legendre differential expression, but the stronger Lemma \ref{t-AC} was used instead. The comparison of Lemma \ref{t-stronglimitpoint} and Proposition \ref{t-jacobimax} thus offers another reference point to the duality between the maximal and minimal domain. 

It is possible to recover the first conclusion of Proposition \ref{t-jacobimax} from Lemma \ref{t-stronglimitpoint} by setting $g(x)=x\in\cD\ti{max}^{(\al,\beta)}$. This yields that, for $f\in\cD_{{\bf J}^{(\al,\beta)}}$,
\begin{align*}
\lim_{x\to\pm 1^{\mp}}(1-x)^{\al+1}(1+x)^{\beta+1}f(x)=0.
\end{align*}

However, the description of the maximal domain in Proposition \ref{t-jacobimax} should not be considered a generalization of the strong limit-point condition (for more see e.g.~\cite{E-SLP}). The strong-limit point condition is achieved by imposing a boundary condition on the maximal domain, and hence it only holds for a specific self-adjoint extension. The maximal domains of powers of the Jacobi expression and their associated left-definite domains in Section \ref{s-equivalence} have a similar relationship.

For contrast, we define the domain
\begin{align}
\cD_{{\bf J}^{(\al,\beta)}(m)}=\left\{ f\in\cD\ti{max}^{(\al,\beta)}~:~\left[f,P_m^{(\al,\beta)}\right]_1(\pm 1)=0\right\},
\end{align}
where $P_m^{(\al,\beta)}$ is the $m$th Jacobi polynomial, $m\in\NN_0$.

\begin{theo}\label{t-firstjacobiany}
The self-adjoint operators defined as ${{\bf J}^{(\al,\beta)}}=\left\{\ell,\cD_{{\bf J}^{(\al,\beta)}} \right\}$ and ${{\bf J}^{(\al,\beta)}(m)}=\left\{ \ell,\cD_{{\bf J}^{(\al,\beta)}(m)} \right\}$ are equal.
\end{theo}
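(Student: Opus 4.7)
The plan is to imitate the proof of Theorem \ref{t-anylegendre} almost verbatim, exploiting the new structural information from Proposition \ref{t-jacobimax} and Corollary \ref{t-jacobi1}. The key inputs are (i) the expansion of $P_m^{(\al,\beta)}$ as a polynomial in $(1-x)$ with a nonzero constant term, and (ii) the fact that every monomial $(1-x)^j$ with $j\geq 1$ contributes nothing to the sesquilinear form at $x=1$ when paired against a maximal-domain function.

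First I would dispense with the trivial case $m=0$, since then $P_0^{(\al,\beta)}\equiv c$ is a nonzero constant and the defining conditions of the two domains coincide up to a multiplicative scalar. For $m\geq 1$, I would invoke equation \eqref{e-jacobihyper} to write
\begin{align*}
P_m^{(\al,\beta)}(x)=\sum_{j=0}^m a_j(1-x)^j, \qquad a_0=\frac{(\al+1)_m}{m!}\neq 0,
\end{align*}
and take $f\in\cD_{{\bf J}^{(\al,\beta)}}$. By linearity of $[\,\cdot\,,\,\cdot\,]_1(1)$ in the second argument,
\begin{align*}
\left[f,P_m^{(\al,\beta)}\right]_1(1)=\sum_{j=0}^m a_j\,[f,(1-x)^j]_1(1).
\end{align*}
Now for each $j\geq 1$, the function $(1-x)^j$ agrees with the representative $\f_j^+$ from \eqref{e-firstkind} on a neighborhood of $1$, so the limit defining $[f,(1-x)^j]_1(1)$ equals $[f,\f_j^+]_1(1)$. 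Since $\f_j^+$ vanishes near $-1$, trivially $[f,\f_j^+]_1(-1)=0$, and Corollary \ref{t-jacobi1} places $\f_j^+$ in the minimal domain, hence $[f,\f_j^+]\big|_{-1}^1=0$. Combining these two facts gives $[f,(1-x)^j]_1(1)=0$ for $j\geq 1$, whence
\begin{align*}
\left[f,P_m^{(\al,\beta)}\right]_1(1)=a_0\,[f,1]_1(1)=0.
\end{align*}
The endpoint $-1$ is handled identically after re-expanding $P_m^{(\al,\beta)}$ in powers of $(1+x)$ via a transformation such as \cite[Equation 15.8.7]{DLMF}, using that the leading constant coefficient is again nonzero, and applying the $\f_j^-$ half of Corollary \ref{t-jacobi1}. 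This shows $\cD_{{\bf J}^{(\al,\beta)}}\subseteq \cD_{{\bf J}^{(\al,\beta)}(m)}$.

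For the reverse inclusion I would reuse exactly the same algebraic identity: given $f\in\cD_{{\bf J}^{(\al,\beta)}(m)}$, the computation above yields $a_0\,[f,1]_1(1)=[f,P_m^{(\al,\beta)}]_1(1)=0$, and nonvanishing of $a_0$ forces $[f,1]_1(1)=0$, with the analogous conclusion at $-1$. I do not anticipate a real obstacle here: the proof is essentially bookkeeping, since Proposition \ref{t-jacobimax} and Corollary \ref{t-jacobi1} already do the analytic work. The one place that requires slight care is confirming that the leading coefficients $a_0$ in both the $(1-x)$ and $(1+x)$ expansions are genuinely nonzero for the chosen parameter range $0\le\al,\beta<1$, which follows immediately from the Pochhammer-symbol formula in \eqref{e-jacobihyper}.
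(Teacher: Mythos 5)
Your proposal is correct and follows essentially the same route as the paper's proof: expand $P_m^{(\al,\beta)}$ in powers of $(1-x)$ (resp.\ $(1+x)$), kill all terms with $j\geq 1$ via Corollary \ref{t-jacobi1}, and reduce both boundary conditions to $a_0[f,1]_1(\pm 1)=0$. You even spell out the reverse inclusion, which hinges on $a_0\neq 0$, a point the paper dismisses with ``follows analogously.''
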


\begin{proof}
The definition in equation \eqref{e-jacobihyper} says $P_m^{(\al,\beta)}(x)$, $m\in\NN_0$, can be decomposed as
\begin{align*}
P_m^{(\al,\beta)}(x)=\sum_{j=0}^m a_j(1-x)^j,
\end{align*}
where $a_j\in\RR$ are constants. The case $m=0$ is trivial. Fix $m\in\NN$ and let $f\in\cD_{{\bf J}^{(\al,\beta)}}$. Then, by linearity of the sesquilinear form
\begin{align*}
\left[f,P_m^{(\al,\beta)}\right]_1(1)=\sum_{j=0}^m a_j\left[f,(1-x)^j\right]_1(1)
=a_0[f,1]_1(1)=0.
\end{align*}
All but one term vanished due to the fact that $\f_j^+$, $j\geq 1$, is in the minimal domain by Corollary \ref{t-jacobi1}. The final equality holds because of the definition of $f\in\cD_{{\bf J}^{(\al,\beta)}}$.

Similarly, if we write equation \eqref{e-jacobihyper} in terms of $(1+x)$ with new constants $b_j$ then
\begin{align*}
\left[f,P_m^{(\al,\beta)}\right]_1(-1)&=\sum_{j=0}^m b_0[f,1]_1(-1)=0.
\end{align*}
Hence, $f\in\cD_{{\bf J}^{(\al,\beta)}(m)}$. The reverse inclusion follows analogously, and the theorem is proven.
\end{proof}

We now attempt to generalize these domain decompositions to those associated with powers of the Jacobi differential expression.

\section{Domains of Powers of the Jacobi Operator}\label{s-powers}

Let $0\leq\al,\beta<1$ and $n\in\NN$. It is known \cite{EKLWY} that the $n$th composition of the Jacobi differential expression \eqref{e-jacobi} can be expressed in Lagrangian symmetric form as
\begin{align}\label{e-njacobi}
\ell_{{\bf J}}^n[f](x)=-\dfrac{1}{(1-x)^{\al}(1+x)^{\beta}}\sum_{k=1}^n (-1)^k[C(n,k,\al,\beta)(1-x)^{\al+k}(1+x)^{\beta+k}f^{(k)}(x)]^{(k)},
\end{align}
on the maximal domain
\begin{align*}
\cD^{{\bf J}, n}\ti{max}=\{ f\in L^2_{\al,\beta}(-1,1) ~|~ f,f',\dots,f^{(2n-1)}\in AC\ti{loc}(-1,1);\ell_{\al,\beta}^n[f]\in L^2_{\al,\beta}(-1,1)\},
\end{align*}
where the Hilbert space $L^2_{\al,\beta}(-1,1)=L^2\left[(-1,1),(1-x)^{\al}(1+x)^{\beta}\right]$. This maximal domain defines the associated minimal domain given in Definition \ref{d-min} and the deficiency indices of $\ell_{{\bf J}}^n$ are $(2n,2n)$. To make the notation more accessible, we are suppressing the dependence on $\al$ and $\beta$ in the definition of $\cD^{{\bf J}, n}\ti{max}$, $\cD^{{\bf J}, n}\ti{min}$ and the defect spaces $\cD_+^{{\bf J}, n}$, $\cD_-^{{\bf J}, n}$, see equation \eqref{e-vN}. Explicit values for the constants $C(n,k,\al,\beta)$ can also be found in \cite{EKLWY}.

The associated sesquilinear form is defined, for $f,g\in\cD^{{\bf J},n}\ti{max}$, via equation \eqref{e-greens}. It can be written explicitly \cite[Section 6]{FFL} as
\begin{align}\label{e-njacobisesqui}
[f,g]_n(x):=\sum_{k=1}^n\sum_{j=1}^k(-1)^{k+j}\bigg\{\big[a_k(x)\overline{g}^{(k)}(x)\big]^{(k-j)}&f^{(j-1)}(x)-\\
&\big[a_k(x)f^{(k)}(x)\big]^{(k-j)}\overline{g}^{(j-1)}(x)\bigg\}\nonumber,
\end{align}
where $a_k(x)=(1-x)^{\al+k}(1+x)^{\beta+k}$. 

The functions $(1-x)^j$ and $(1+x)^j$, for finite $j\in\NN_0$, are in $\cD\ti{max}^{{\bf J},n}$ because they are polynomials which are bounded at the endpoints. Equation \eqref{e-njacobi} allows us to easily calculate
\begin{align}\label{e-basisfunctionsinmax}
    \ell^n_{{\bf J}}[(1-x)^j]=\sum_{k=1}^n\sum_{i=0}^k c(n,k,\al,\beta,i,j)(1-x)^{j-k+i}(1+x)^{k-i}
\end{align}
 for some finite constants $c(n,k,\al,\beta,i,j)$, which vanish when $i-k>j$. The expression for $\ell^n_{{\bf J}}[(1+x)^j]$ can be determined analogously and is also a polynomial bounded on $[-1,1]$. 
 Theorem \ref{t-limits} says that, for $f,g\in\cD^{{\bf J},n}\ti{max}$, equation \eqref{e-njacobisesqui} is finite. The following theorem uses the functions $(1-x)^j$ and $(1+x)^j$ in the sesquilinear form to determine properties of functions in the maximal domain.

\begin{theo}\label{t-maxdomaindecomp}
If $f\in\cD\ti{max}^{{\bf J},n}$, then for $j=0,\dots,n$ 
\begin{align}\label{e-decompresult1}
    \lim_{x\to \pm 1^{\mp}}(1-x)^{\al+j}(1+x)^{\beta+j}f^{(j)}(x) \text{ is finite}.
\end{align}
Furthermore, for $k=0,\dots,n$ and each $j\in\NN$ such that $j<k$,
\begin{align}\label{e-decompresult2}
\lim_{x\to \pm 1^{\mp}}\left[(1-x)^{\al+k}(1+x)^{\beta+k}f^{(k)}(x)\right]^{(k-j)} \text{ is finite}.
\end{align}
\end{theo}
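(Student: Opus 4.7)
My plan is to generalize Proposition \ref{t-jacobimax} by probing the finite sesquilinear form \eqref{e-njacobisesqui} with the polynomial test functions $g(x) = (1-x)^m$ for $m = 0, 1, \dots, n$ (to study the endpoint $+1$), and symmetrically $g(x) = (1+x)^m$ at $-1$. Every such $g$ lies in $\cD^{{\bf J}, n}\ti{max}$, so Theorem \ref{t-limits} forces $[f, g]_n(\pm 1)$ to be finite for every $f \in \cD^{{\bf J}, n}\ti{max}$. The crucial feature is that $g^{(k)}(x) = (-1)^k \frac{m!}{(m-k)!}(1-x)^{m-k}$ for $k \leq m$ and vanishes for $k > m$, so after Leibniz-expanding $[a_k(x)\overline{g}^{(k)}(x)]^{(k-j)}$ with $a_k(x) = (1-x)^{\al+k}(1+x)^{\beta+k}$, every surviving term in \eqref{e-njacobisesqui} becomes an explicit polynomial in $(1-x)$ and $(1+x)$ multiplied by a member of the collection $\{[a_k(x) f^{(k)}(x)]^{(k-j)} : 0 \leq j \leq k \leq n\}$. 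Since $f \in AC\ti{loc}(-1,1)$, any pure derivative $f^{(j-1)}$ evaluated at an interior reference point is automatically finite and absorbs into the constants.

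I would then extract each individual finite limit in \eqref{e-decompresult1}--\eqref{e-decompresult2} by induction on $m$. The base case $g \equiv 1$ collapses \eqref{e-njacobisesqui} to $\sum_{k=1}^n (-1)^k [a_k(x) f^{(k)}(x)]^{(k-1)}$, whose finiteness at $x \to 1^-$ supplies the first nontrivial linear relation. Each successive $g = (1-x)^m$ introduces a new relation among the boundary limits in \eqref{e-decompresult2}, the newly-activated term being the one of lowest-possible weight in $(1-x)$ that only just survived the Leibniz expansion. Arranging these relations in a lexicographic order on $(k, j)$ makes the associated coefficient matrix triangular, so inverting one row at a time isolates every limit in \eqref{e-decompresult2}. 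Statement \eqref{e-decompresult1} for $j \geq 1$ then appears as the diagonal case $k = j$, and the remaining $j = 0$ case is recovered by Leibniz-expanding the $(k, j) = (1, 0)$ finiteness from \eqref{e-decompresult2} and reapplying the $L^p$-comparison trick from Proposition \ref{t-jacobimax} to rule out a nonzero blow-up of $(1-x)^{\al}(1+x)^{\beta} f(x)$.

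The main obstacle will be verifying that the $n+1$ linear relations produced by $g = (1-x)^m$ really are triangular (or at least invertible) when viewed as relations among the $(n+1)(n+2)/2$ target limits near $x=1$; this reduces to checking nonvanishing of certain products of Leibniz and Pochhammer coefficients, a combinatorial task that grows heavier with $n$. A secondary technical point is that at each induction step one must carefully separate terms that genuinely contribute a new finite limit from those that are forced to vanish at the endpoint (because they carry a positive power of $(1-x)$ surviving the expansion) or that were already controlled at an earlier stage; I would organize this bookkeeping through the lexicographic order mentioned above. The endpoint $-1$ is handled in parallel by the family $g = (1+x)^m$, and the two halves decouple because each test function vanishes to high enough order at the opposite endpoint to kill its contribution there.
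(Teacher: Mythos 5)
There is a genuine gap: you probe only the top-order form $[\,\cdot\,,\cdot\,]_n$ with the $n+1$ test functions $(1-x)^m$, $m=0,\dots,n$, and hope the resulting relations are triangular in the target limits. By your own count this is $n+1$ relations for roughly $n(n+1)/2$ unknown boundary quantities at $x=1$, and the relations do not decouple. Already for $n=2$: with $a_k(x)=(1-x)^{\al+k}(1+x)^{\beta+k}$, the probe $g\equiv 1$ gives only that $\lim_{x\to 1^-}\bigl(a_1f'-[a_2f'']'\bigr)$ is finite, and $g=1-x$ gives (after discarding terms carrying an extra factor $(1-x)$ times that same combination) only that $\lim_{x\to 1^-}\bigl(a_2f''-a_1f\bigr)$ is finite; the probe $g=(1-x)^2$ again produces a mixed combination involving $a_1f$ and $(1-x)$-weighted quantities that are not yet individually controlled. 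Nothing in this scheme separates $a_1f'$ from $[a_2f'']'$, or $a_2f''$ from $a_1f$, so finiteness of each individual limit in \eqref{e-decompresult1}--\eqref{e-decompresult2} cannot be extracted. The ``lowest-weight term just survives'' mechanism you invoke only isolates a single new term when \emph{all other} terms in the expansion are already known finite, which is exactly what is missing at the first step.

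The ingredient you need (and the one the paper's proof rests on) is the nesting of maximal domains, equation \eqref{e-maxnesting}: since $f\in\cD\ti{max}^{{\bf J},n}$ implies $f\in\cD\ti{max}^{{\bf J},m}$ for every $m\le n$, \emph{all} the lower-order forms $[f,g]_m(\pm1)$, $1\le m\le n$, are finite as well. This supplies a genuinely triangular family of relations: $[f,1]_1$ finite gives $a_1f'$ finite (Proposition \ref{t-jacobimax}, which also gives $a_1f\to 0$); then $[f,1]_2,\dots,[f,1]_n$ successively isolate $[a_kf^{(k)}]^{(k-1)}$; then $[f,1-x]_2$ isolates $a_2f''$ because the $k=j=1$ terms are already finite, $[f,1-x]_3$ isolates $[a_3f''']'$, and so on, pairing $(1-x)^{m-1}$ with forms of order $m,m+1,\dots,n$ in a double induction so that each evaluation contains exactly one uncontrolled term. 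If you incorporate this chain of lower-order forms, your bookkeeping by lexicographic order on $(k,j)$ becomes essentially the paper's argument; without it, the linear system is underdetermined and the approach fails for every $n\ge 2$.
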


\begin{proof}
The case $n=1$ was proven in Proposition \ref{t-jacobimax}. Without loss of generality, fix $n\in\NN$ such that $n\geq 2$ and let $f\in\cD\ti{max}^{{\bf J},n}$. In order to avoid tedious repetition, we will only include the argument for the endpoint $1$. Hence, functions of the form $(1+x)^{\beta+k}$ are suppressed here and throughout the calculations, as they contribute only a finite constant at that endpoint. The results at $-1$ follow by an analogous argument.

Notice that, by definition, the maximal domains for different compositions are nested and form the chain
\begin{align}\label{e-maxnesting}
\cdots\subset\cD\ti{max}^{{\bf J},n}\subset\cdots\subset\cD\ti{max}^{{\bf J},2}\subset\cD\ti{max}^{{\bf J}}.
\end{align}

Theorem \ref{t-limits} says that $[f,g]_n(-1)$ and $[f,g]_n(1)$ both exist and are finite for all $g\in\cD\ti{max}^{{\bf J},n}$. The chain in equation \eqref{e-maxnesting} then implies that $[f,g]_m(-1)$ and $[f,g]_m(1)$ exist and are finite for all $m\in\NN_0$ such that $m\leq n$.

The main argument now includes a nested sequence of induction proofs. We claim that, for $s\in\NN$ and $r\in\NN_0$ such that $s \leq r \leq n-1$,
\begin{align}\label{e-inductionclaimgeneral}
    \lim_{x\to 1^-}\left[(1-x)^{\al+r}f^{(r)}(x)\right]^{(r-s)} \text{ finite } \implies \lim_{x\to 1^-}\left[(1-x)^{\al+r}f^{(r)}(x)\right]^{(r-(s+1))} \text{ finite}.
\end{align}
On the side, we mention that when $r=s$ the statement has a special case that is also part of our goal for the Theorem:
\begin{align}\label{e-specialcase}
 \lim_{x\to 1^-}(1-x)^{\al+r}f^{(r)}(x) \text{ is finite}.
\end{align}
Proceed by induction on $s$. Each value of $s$ will also require a proof by induction, so begin with the base case for $s=1$. 

We claim that for $r=1,\dots,n$
\begin{align}\label{e-inductionclaim1}
    \lim_{x\to 1^-}\left[(1-x)^{\al+r}f^{(r)}(x)\right]^{(r-1)} \text{ is finite}.
\end{align}
The base case where $r=1$ was shown in Proposition \ref{t-jacobimax}, and applies via equation \eqref{e-maxnesting}. For the sake of clarity, we provide the argument when $r=2$ as well. We wish to examine $[f,1]_m(1)$ for higher values of $m$. Note that the limits in Proposition \ref{t-jacobimax} are still finite when multiplied by powers of $(1-x)$. Thus,
\begin{align}
[f,1]_2(1)&=\lim_{x\to 1^-}\sum_{k=1}^2\sum_{j=1}^k(-1)^{k+j}\left[(1-x)^{\al+k}f^{(k)}(x)\right]^{(k-j)}(1)^{(j-1)}\\
&=\lim_{x\to 1^-}(1-x)^{\al+1}f'(x)-\left[(1-x)^{\al+2}f''(x)\right]'\nonumber \text{ is finite},
\end{align}
which implies that 
\begin{align}\label{e-1basecase}
    \lim_{x\to 1^-}\left[(1-x)^{\al+2}f''(x)\right]' \text{ is finite}.
\end{align}
Next, let $m\leq n$ and assume the inductive hypothesis that for all $s\in\NN_0$ such that $s\leq m-1$,
\begin{align*}
    \lim_{x\to 1^-}\left[(1-x)^{\al+{s}}f^{(s)}(x)\right]^{(s-1)} \text{ is finite}.
\end{align*}
Then, $[f,1]_m(1)$ is finite because $f\in\cD\ti{max}^{{\bf J},n}$, so the expression of interest becomes
\begin{align*}
  [f,1]_m(1)&=\lim_{x\to 1^-}\sum_{k=1}^m\sum_{j=1}^k(-1)^{k+j}\left[(1-x)^{\al+k}f^{(k)}(x)\right]^{(k-j)}(1)^{(j-1)}\\
    &=\lim_{x\to 1^-}\sum_{k=1}^m(-1)^{k+1}\left[(1-x)^{\al+k}f^{(k)}(x)\right]^{(k-1)} .
\end{align*}
The terms $k=1,\dots,m-1$ are all finite due to our inductive hypothesis, so the term for $k=m$ must also be finite. The claim in equation \eqref{e-inductionclaim1}, and hence the base case $s=1$ in equation \eqref{e-inductionclaimgeneral}, is thus proven. 

Unfortunately, the case $s=1$ is not representative of what is to come, as the sesquilinear forms become more complicated for higher values of $s$. We will show the case $s=2$ for the sake of clarity. Examine,
\begin{align*}
    [f,(1-x)]_2(1)=\lim_{x \to 1^-}\sum_{k=1}^2\sum_{j=1}^k(-1)^{k+j}&\bigg\{\left[(1-x)^{\al+k}(1-x)^{(k)}\right]^{(k-j)}f^{(j-1)}(x) \\ &-\left[(1-x)^{\al+k}f^{(k)}(x)\right]^{(k-j)}(1-x)^{(j-1)}\bigg\}\text{ is finite}.\nonumber
\end{align*}
The terms when $k=j=1$ are finite by Proposition \ref{t-jacobimax}, and when $k=2$, $j=1$ by equation \eqref{e-inductionclaim1}. The remaining term, generated when $k=j=2$, is thus finite: 
\begin{align}\label{e-bestx}
    \lim_{x\to 1^-}(1-x)^{\al+2}f''(x).
\end{align}
Furthermore,
\begin{align*}
[f,(1-x)]_3(1)=\lim_{x \to 1^-}\sum_{k=1}^3\sum_{j=1}^k(-1)^{k+j}&\bigg\{\left[(1-x)^{\al+k}(1-x)^{(k)}\right]^{(k-j)}f^{(j-1)}(x) \\ &-\left[(1-x)^{\al+k}f^{(k)}(x)\right]^{(k-j)}(1-x)^{(j-1)}\bigg\}\text{ is finite}.\nonumber
\end{align*}
The terms where $j=k$ are finite by Proposition \ref{t-jacobimax} and equation \eqref{e-bestx}. When j=1, terms are finite by equation \eqref{e-inductionclaim1}. The remaining term, generated when $j=2$ and $k=3$,
\begin{align}
\lim_{x\to 1^-}\left[(1-x)^{\al+3}f'''(x)\right]'
\end{align}
is thus finite as well.

An induction argument on the sesquilinear form used, similar to that used to show equation \eqref{e-inductionclaim1}, can now be performed to show that for $r=2,\dots,n$
\begin{align*}
    \lim_{x\to 1^-}\left[(1-x)^{\al+r}f^{(r)}(x)\right]^{(r-2)}\text{ is finite}.
\end{align*}
A proof of the statement is suppressed for brevity, and we proceed with our central inductive argument. Assume the inductive hypothesis that for $s\leq m-1$ and $r=m-1,\dots, n$
\begin{align}\label{e-inductivehypothesisgeneral}
    \lim_{x\to 1^-}\left[(1-x)^{\al+r}f^{(r)}(x)\right]^{(r-s)}\text{ is finite}.
\end{align}

We calculate that
\begin{align}\label{e-hugejacobisum}
[f,(1-x)&^{m-1}]_{m}(1) &&\\
&=\lim_{x \to 1^-}\sum_{k=1}^{m}\sum_{j=1}^k&&{\hspace{-4mm}}(-1)^{k+j}\bigg\{\left[(1-x)^{\al+k}\left[(1-x)^{m-1}\right]^{(k)}\right]^{(k-j)}f^{(j-1)}(x)\nonumber\\
    &   &&-\left[(1-x)^{\al+k}f^{(k)}(x)\right]^{(k-j)}\left[(1-x)^{m-1}\right]^{(j-1)}\bigg\}\text{ is finite}.\nonumber
\end{align}
Terms where $j=k<m$ are finite by Proposition \ref{t-jacobimax} and the special case of the inductive hypothesis, as stated in equation \eqref{e-specialcase}. Let $k\leq m-1$. Since $j\leq k$, the terms on the left hand side of the sum, up to a finite constant, can be simplified near $x=1$ as
\begin{align}\label{e-lhsjacobi}
    \lim_{x \to 1^-}\sum_{k=1}^{m}\sum_{j=1}^k\left[(1-x)^{\al+k}\left[(1-x)^{m-1}\right]^{(k)}\right]^{(k-j)}f^{(j-1)}(x) \\
    \approx \lim_{x \to 1^-}\sum_{k=1}^{m}\sum_{j=1}^k (1-x)^{\al+(j-1)+(m-k)}f^{(j-1)}(x),\nonumber
\end{align}
all of which are finite by the inductive hypothesis. If $k=m$, all of the terms in equation \eqref{e-lhsjacobi} are 0. 

Terms on the right hand side of the sum in equation \eqref{e-hugejacobisum} are finite for $k\leq m-1$, and $j\leq k$, by the inductive hypothesis. The term where both $j=k=m$ thus finally yields
\begin{align}
    \lim_{x\to 1^-}(1-x)^{\al+m}f^{(m)}(x)\text{ is finite}.
\end{align}
We similarly calculate that 
\begin{align*}
    [f,(1-x)^{m-1}]_{m+1}(1)=\lim_{x \to 1^-}\sum_{k=1}^{m+1}&\sum_{j=1}^k(-1)^{k+j}
 \bigg\{\left[(1-x)^{\al+k}\left[(1-x)^{m-1}\right]^{(k)}\right]^{(k-j)}f^{(j-1)}(x)\\
 &-\left[(1-x)^{\al+k}f^{(k)}(x)\right]^{(k-j)}\left[(1-x)^{m-1}\right]^{(j-1)}\bigg\}\text{ is finite}.\nonumber
\end{align*}
Every term except for when $k=m+1$ and $j=m$ is finite either by the inductive hypothesis or is simply $0$ (i.e.~when $j=k=m+1$). Hence,
\begin{align*}
    \lim_{x\to 1^-}\left[(1-x)^{\al+m+1}f^{(m+1)}(x)\right]'\text{ is finite}.
\end{align*}
Again, an induction argument on the sesquilinear form used, similar to that used to show equation \eqref{e-inductionclaim1}, can now be performed to show that for $k=m,\dots,n$
\begin{align*}
    \lim_{x\to 1^-}\left[(1-x)^{\al+k}f^{(k)}(x)\right]^{(k-m)}\text{ is finite}.
\end{align*}

The claim in equation \eqref{e-inductionclaimgeneral} is thus proven by induction for $s\leq k\leq n-1$. The claim is not true for $s\leq k=n$ simply because the index of the sesquilinear form cannot be further increased to $n+1$ in the previous calculations. However, the intermediate result that 
\begin{align*}
    \lim_{x\to 1^-}(1-x)^{\al+n}f^{(n)}(x)\text{ is finite},
\end{align*}
does immediately follow. The theorem is thus proven.
\end{proof}

These properties of functions in the maximal domain also determine which functions are in the minimal domain, similar to Corollaries \ref{t-legendre+} and \ref{t-jacobi1}.

\begin{theo}\label{t-njacobiminimal}
The functions $\f_s^+$ and $\f_s^-$, $s\geq n$, belong to the minimal domain associated with the $n$th composition of the Jacobi differential expression \eqref{e-njacobi}.
\end{theo}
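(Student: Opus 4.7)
I will verify the defining condition from Definition~\ref{d-min}: for $s \ge n$ I must show $[\varphi_s^{\pm}, g]_n \big|_{-1}^{\,1} = 0$ for every $g \in \cD^{{\bf J},n}\ti{max}$. Since $\varphi_s^+ \equiv 0$ in a neighborhood of $-1$, every term of \eqref{e-njacobisesqui} involving $\varphi_s^+$ or its derivatives vanishes at $-1$, so $[\varphi_s^+, g]_n(-1) = 0$ automatically; an identical observation handles $[\varphi_s^-, g]_n(1) = 0$. The content of the proof is thus the analysis at the nontrivial endpoint in each case, and by symmetry between the two endpoints it suffices to treat $[\varphi_s^+, g]_n(1)$.

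Near $x=1$, substitute $\varphi_s^+(x) = (1-x)^s$ and note that $(1+x)^{\beta+k}$ and its derivatives are bounded, contributing only finite constants (absorbed via $\approx$). The elementary formula $(\varphi_s^+)^{(m)}(x) = (-1)^m \tfrac{s!}{(s-m)!}(1-x)^{s-m}$ for $m \le s$ is exactly the input that \eqref{e-njacobisesqui} sees, and the assumption $s \ge n$ guarantees that for every pair $1 \le j \le k \le n$ both $(\varphi_s^+)^{(j-1)}$ and $(\varphi_s^+)^{(k)}$ are nonnegative powers of $(1-x)$.

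The left family of summands, $\bigl[a_k \bar g^{(k)}\bigr]^{(k-j)} (\varphi_s^+)^{(j-1)}$, is controlled by Theorem~\ref{t-maxdomaindecomp}: equation~\eqref{e-decompresult1} (when $j = k$) or \eqref{e-decompresult2} (when $j < k$) yields that $\lim_{x\to 1^-}\bigl[a_k \bar g^{(k)}\bigr]^{(k-j)}$ is finite, while $(\varphi_s^+)^{(j-1)}(x) \approx (1-x)^{s-j+1}$ with $s-j+1 \ge s - n + 1 \ge 1$ vanishes at $1$. For the right family, $\bigl[a_k (\varphi_s^+)^{(k)}\bigr]^{(k-j)} \bar g^{(j-1)}$, a Leibniz expansion of $[(1-x)^{\alpha+s}(1+x)^{\beta+k}]^{(k-j)}$ reveals the leading behavior $\approx (1-x)^{\alpha + s - (k-j)}$, all lower-order corrections (where derivatives fall on $(1+x)^{\beta+k}$ instead) carrying even higher powers of $(1-x)$. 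Applying Theorem~\ref{t-maxdomaindecomp} at index $j-1 \le n-1$ to $g$ converts the finiteness of $\lim (1-x)^{\alpha+j-1}(1+x)^{\beta+j-1} g^{(j-1)}(x)$ into the pointwise bound $|g^{(j-1)}(x)| \le C(1-x)^{-\alpha - j + 1}$ on a left neighborhood of $1$. The full product is then bounded by a constant multiple of $(1-x)^{\alpha+s-(k-j)}\cdot(1-x)^{-\alpha-j+1} = (1-x)^{s-k+1}$, and $s \ge n \ge k$ gives exponent $\ge 1$, so the term vanishes. Summing yields $[\varphi_s^+, g]_n(1) = 0$, and the symmetric argument at $-1$ completes the case of $\varphi_s^-$.

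The main obstacle is the second family: Theorem~\ref{t-maxdomaindecomp} controls $g$ only through weighted combinations, not pointwise, so one must pass from the existence of a finite limit to a pointwise estimate on a one-sided neighborhood. This is routine (a finite limit of $(1-x)^{\alpha+j-1}g^{(j-1)}(x)$ forces the quantity to be bounded nearby, hence the desired estimate) but must be spelled out. Once in hand, the exponent arithmetic uses the hypothesis $s \ge n$ precisely once, and no other ingredient is required.
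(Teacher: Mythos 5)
Your proof is correct and takes essentially the same route as the paper's: it splits the sesquilinear form at the nontrivial endpoint into the same two families of terms, controls the weighted derivatives of the maximal-domain function via Theorem \ref{t-maxdomaindecomp}, and uses the exponent arithmetic $s\ge n\ge k\ge j$ to force every term to vanish (your pointwise bound from the finite limit is the same regrouping the paper performs as $(1-x)^{s-k+1}\cdot(1-x)^{\al+j-1}f^{(j-1)}$). The only cosmetic difference is that for the $j=1$ term the paper appeals to Proposition \ref{t-jacobimax} (which gives $(1-x)^{\al+1}(1+x)^{\beta+1}f\to 0$) rather than the $j=0$ instance of \eqref{e-decompresult1} that your bound cites.
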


\begin{proof}
We will show the theorem for the functions $\f_s^+$, $s\geq n$, and the corresponding statement for the functions $\f_s^-$ will follow. 

The assertion that $\f_s^+$ belongs to the minimal domain of $\ell^n_{{\bf J}}$, for all $s\geq n$, means that for all $f\in \cD\ti{max}^{{\bf J},n}$ we have $[f,\f_s^+]_n(1)=0$. We deconstruct the expression for the sesquilinear form given by \eqref{e-njacobisesqui} into the terms
\begin{align*}
P(x):=\sum_{k=1}^n\sum_{j=1}^k(-1)^{k+j}\left[(1-x)^{\al+k}(1+x)^{\beta+k}\left[(1-x)^s\right]^{(k)}\right]^{(k-j)}f^{(j-1)}(x), \\
N(x):=\sum_{k=1}^n\sum_{j=1}^k(-1)^{k+j}\left[(1-x)^{\al+k}(1+x)^{\beta+k}f^{(k)}(x)\right]^{(k-j)}\left[(1-x)^s\right]^{(j-1)}.
\end{align*}
It is clear that the $(1+x)^{\beta+k}$ terms above can be ignored in our calculations as they contribute a factor of at most $2^{\beta+n}$ near $x=1$, which is finite. Other finite constants will also be omitted, as they can be pulled outside the limits. We set out to prove the stronger condition that
\begin{align*}
\lim_{x\to 1^-}P(x)=\lim_{x\to 1^-}N(x)=0.
\end{align*}
Next, we calculate that
\begin{align}\label{e-brokensum}
\lim_{x\to 1^-} P(x) &\approx \lim_{x\to 1^-}\sum_{k=1}^n\sum_{j=1}^k\left[(1-x)^{\al+k}(1-x)^{s-k}\right]^{(k-j)}f^{(j-1)}(x)\nonumber \\
&=\lim_{x\to 1^-}\sum_{k=1}^n\sum_{j=1}^k\left[(1-x)^{\al+s}\right]^{(k-j)}f^{(j-1)}(x)\nonumber \\
&\approx \lim_{x\to 1^-}\sum_{k=1}^n\sum_{j=1}^k(1-x)^{\al+s-k+j}f^{(j-1)}(x)\nonumber\\
&=\lim_{x\to 1^-}\sum_{k=1}^n\sum_{j=1}^k(1-x)^{s-k+1}(1-x)^{\al+j-1}f^{(j-1)}(x),\nonumber\\
&=\lim_{x\to 1^-}\sum_{k=1}^n\left[(1-x)^{s-k+1}\sum_{j=1}^k(1-x)^{\al+j-1}f^{(j-1)}(x)\right].
\end{align}
Theorem \ref{t-maxdomaindecomp} says that 
\begin{align*}
\lim_{x\to 1^-}(1-x)^{\al+j-1}f^{(j-1)}(x)\text{ is finite},
\end{align*}
for $j=2,\dots,n$. Additionally, for $k=1,\dots,n$, and $s\geq n$,  
\begin{align*}
\lim_{x\to 1^-}(1-x)^{s-k+1}=0.
\end{align*}
This implies
\begin{align*}
    \lim_{x\to 1^-}\sum_{k=1}^n\left[(1-x)^{s-k+1}\sum_{j=2}^k(1-x)^{\al+j-1}f^{(j-1)}(x)\right]=0.
\end{align*}
The remaining case where $j=1$ requires Proposition \ref{t-jacobimax} to see:
\begin{align*}
    \lim_{x\to 1^-}\sum_{k=1}^n(1-x)^{\al+s-k+1}f(x)=0.
\end{align*}
We conclude that the limit in equation \eqref{e-brokensum} is in fact 0, as desired. 

On the other hand, the limit of $N(x)$ is immediately known to be finite for each choice of $j$ and $k$ due to Theorem \ref{t-maxdomaindecomp}. Multiplication by the term $(1-x)^{s-j+1}$ then shows that the limit must be 0 for each $j$ and $k$. The theorem is thus proven.
\end{proof}

The properties of the maximal domain $\cD\ti{max}^{{\bf J},n}$ in Theorem \ref{t-maxdomaindecomp} were revealed by the interaction between the composed operator $\ell_{\bf J}^n$ and behavior near endpoints described by the functions $\left\{\f_j^-\right\}_{j=0}^{n-1}$ and $\left\{\f_j^+\right\}_{j=0}^{n-1}$. Theorem \ref{t-njacobiminimal} then says that all other associated endpoint behaviors, that occur when $j\in\NN$ and $j\geq n$, are in the minimal domain. However, the deficiency indices of $\ell_{\bf J}^n$ are $(2n,2n)$ and there are only $2n$ functions that don't seem to be in the minimal domain so far. The other $2n$ must behave differently.

We define a class of $C^{\infty}[-1,1]$ functions by their boundary asymptotics, with
elements of the class denoted by $\psi_j^+$ and $\psi_j^-$, for $j\in\NN_0$:
\begin{equation}\label{e-secondkinddefn}
\begin{aligned}
\psi_j^+(x)&:=\left.
\begin{cases}
(1-x)^{-\al+j}, & \text{ for }x \text{ near }1 \\
0, & \text{ for }x \text{ near }-1
\end{cases}
\right\}, \text{ and} \\
\psi_j^-(x)&:=\left.
\begin{cases}
0, & \text{ for }x \text{ near }1 \\
(1+x)^{-\beta+j}, & \text{ for }x \text{ near }-1
\end{cases}
\right\}.
\end{aligned}
\end{equation}
the dependence of the functions $\psi_j^+$ and $\psi_j^-$ on the parameters $\al$ and $\beta$ is suppressed here for simplicity. The choice of these functions is explained in Remark \ref{r-choices}, but the following lemma tells us that such behavior does occur in the maximal domain.

\begin{lem}\label{l-secondkindmax}
The functions $(1-x)^{-\al+j}$ and $(1+x)^{-\beta+j}$, for $j\in\NN_0$, are in the maximal domain $\cD\ti{max}^{{\bf J},n}$.
\end{lem}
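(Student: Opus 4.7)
I would verify the three defining conditions for the maximal domain $\cD\ti{max}^{{\bf J},n}$ applied to $f(x) = (1-x)^{-\al+j}$: membership in $L^2_{\al,\beta}(-1,1)$, local absolute continuity of $f, f', \ldots, f^{(2n-1)}$, and $\ell^n_{{\bf J}}[f] \in L^2_{\al,\beta}(-1,1)$. The first two are routine: the weighted norm squared equals $\int_{-1}^{1}(1-x)^{-\al+2j}(1+x)^\beta\,dx$, which converges at $x=1$ since $-\al+2j > -1$ whenever $\al<1$ and $j\geq 0$, and no singularity arises at $x=-1$; since $(1-x)^{-\al+j}$ is smooth on $(-1,1)$, every derivative is trivially in $AC\ti{loc}(-1,1)$.

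The heart of the argument is the third condition. My strategy is to establish that the finite-dimensional subspace $V_j := \spa\{(1-x)^{-\al+k} : k=0,1,\ldots,j\}$ is invariant under $\ell_{{\bf J}}$. Once this is known, iteration yields $\ell^n_{{\bf J}}[(1-x)^{-\al+j}] \in V_j$, and every element of $V_j$ lies in $L^2_{\al,\beta}(-1,1)$ by the same calculation as in the first condition (with $j$ replaced by $k\leq j$), which finishes the proof.

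To check that $V_j$ is invariant, I would directly compute $\ell_{{\bf J}}[(1-x)^{-\al+k}]$ from \eqref{e-jacobi}: after differentiating, multiplying through by the weight, and rewriting the resulting $(1+x)$ factor via $(1+x) = 2 - (1-x)$, one obtains
\begin{align*}
\ell_{{\bf J}}[(1-x)^{-\al+k}] = -2k(k-\al)(1-x)^{-\al+k-1} + (k-\al)(k+\beta+1)(1-x)^{-\al+k}.
\end{align*}
The decisive feature is the explicit factor of $k$ multiplying $(1-x)^{-\al+k-1}$: it vanishes precisely at $k=0$, preventing the non-$L^2$ function $(1-x)^{-\al-1}$ from being produced, and thereby ensuring that $V_j$ is genuinely $\ell_{{\bf J}}$-invariant. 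The corresponding statement for $(1+x)^{-\beta+j}$ then follows by the symmetry that exchanges the two endpoints and the parameters $\al \leftrightarrow \beta$.

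The main obstacle is this single computation together with the observation that the downward-shift coefficient vanishes at $k=0$; everything else reduces to routine bookkeeping and the trivial $L^2$ and $AC\ti{loc}$ verifications.
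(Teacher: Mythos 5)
Your proof is correct, but it follows a genuinely different route from the paper's. The paper works directly with the Lagrangian symmetric form \eqref{e-njacobi} of the $2n$-th order expression: it applies $\ell^n_{\bf J}$ to $(1-x)^{-\al+j}$, cancels the weight against the factors $(1-x)^{\al+k}$, expands the remaining derivatives of $(1+x)^{\beta+k}(1-x)^j$ by Leibniz (noting that terms where the polynomial $(1-x)^j$ is differentiated more than $j$ times vanish), and concludes that the worst surviving endpoint behavior is $(1-x)^{-\al}$, which lies in $L^2_{\al,\beta}(-1,1)$; all constants are discarded via the $\approx$ bookkeeping. You instead use only the second-order expression \eqref{e-jacobi}: your exact one-step identity $\ell_{\bf J}[(1-x)^{-\al+k}]=-2k(k-\al)(1-x)^{-\al+k-1}+(k-\al)(k+\beta+1)(1-x)^{-\al+k}$ is correct (I checked it, including the $k=0$ case), and since the down-shift coefficient carries the factor $k$, the span $V_j=\spa\{(1-x)^{-\al+k}:0\le k\le j\}$ is $\ell_{\bf J}$-invariant, so iterating the composition keeps $\ell^n_{\bf J}[(1-x)^{-\al+j}]$ inside $V_j\subset L^2_{\al,\beta}(-1,1)$. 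Your argument is more self-contained in that it does not invoke the explicit symmetric form of the $n$-th power (an input from the literature), and it yields sharper information: the image lies in an explicit finite-dimensional space with computable coefficients, with the vanishing of the coefficient of $(1-x)^{-\al-1}$ at $k=0$ playing exactly the role that the paper's "worst term is $(1-x)^{-\al}$" observation plays. The paper's estimate, on the other hand, matches the $\approx$-style asymptotic bookkeeping used throughout its other proofs and avoids tracking any constants. The symmetry $x\mapsto -x$, $\al\leftrightarrow\beta$ you invoke for $(1+x)^{-\beta+j}$ is the same reduction the paper makes.
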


\begin{proof}
We will show the lemma for the functions $(1-x)^{-\al+j}$ and the analogous result will follow for the functions $(1+x)^{-\beta+j}$. Fix $n$ and let $j\in\NN_0$. It is immediately clear that each function is in $L^2_{\al,\beta}(-1,1)$ and possesses the desired differentiability properties. It remains only to show that $\ell_{{\bf J}}^n[(1-x)^{-\al+j}]\in L^2_{\al,\beta}(-1,1)$. Calculate
\begin{align*}
\ell_{{\bf J}}^n[(1-x)^{-\al+j}]&\approx\dfrac{1}{(1-x)^{\al}(1+x)^{\beta}}\sum_{k=1}^n\left[(1-x)^{\al+k}(1+x)^{\beta+k}(1-x)^{-\al+j-k}\right]^{(k)} \\
&=\dfrac{1}{(1-x)^{\al}(1+x)^{\beta}}\sum_{k=1}^n\left[(1+x)^{\beta+k}(1-x)^j\right]^{(k)} \\
&\approx\dfrac{1}{(1-x)^{\al}(1+x)^{\beta}}\sum_{k=1}^n\sum_{i=0}^k\left[(1+x)^{\beta+k-i}(1-x)^{j-k+i}\right] \\
&=\sum_{k=1}^n\sum_{i=0}^k\left[(1+x)^{k-i}(1-x)^{-\al+j-k+i}\right].
\end{align*}
The differentiation performed on the second line requires that $j\geq k$ to ensure the term is nonzero. We focus on these values of $k$, as otherwise the term is trivially in $L^2_{\al,\beta}(-1,1)$. The last line then states that the worst behavior in any term of the double sum is $(1-x)^{-\al}$, which occurs when $j=k$ and $i=0$. This term is still in $L^2_{\al,\beta}(-1,1)$ though. Every term in the double sum is thus in $L^2_{\al,\beta}(-1,1)$ so the result follows. 
\end{proof}

The functions in equation \eqref{e-firstkind} and equation \eqref{e-secondkinddefn} indeed exhibit behavior that is not in the minimal domain. 

\begin{theo}\label{t-secondkinddefect}
The functions $\f_j^+$, $\psi_j^+$, $\f_j^-$ and $\psi_j^-$, for $j\in\NN_0$ such that $j<n$, are not in the minimal domain $\cD\ti{min}^{{\bf J},n}$.
\end{theo}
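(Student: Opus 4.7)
The plan is to demonstrate non-membership by exhibiting, for each of the four families, a companion function $g\in\cD\ti{max}^{{\bf J},n}$ for which the sesquilinear form $[f,g]_n\big|_{-1}^{1}\neq 0$; by the definition of $\cD\ti{min}^{{\bf J},n}$ this forces $f\notin\cD\ti{min}^{{\bf J},n}$. By the reflection symmetry $x\mapsto -x$ combined with $\al\leftrightarrow\beta$, the analysis at $-1$ reduces to the analysis at $1$, so it suffices to handle $\f_j^+$ and $\psi_j^+$ for $0\le j<n$. The natural companion for $\f_j^+$ is $\psi_{n-1-j}^+$, and conversely; by the skew-Hermitian property of $[\cdot,\cdot]_n$ these two pairings are essentially the same calculation. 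Since both $\f_j^+$ and $\psi_{n-1-j}^+$ vanish identically near $-1$, the boundary contribution at $-1$ is automatically zero, and only $[\f_j^+,\psi_{n-1-j}^+]_n(1)$ needs to be computed.

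For this computation, substitute $f(x)=(1-x)^j$ and $g(x)=(1-x)^{-\al+n-1-j}$ (valid near $x=1$) into the explicit expression \eqref{e-njacobisesqui}. Since $f$ is a polynomial of degree $j<n$, one has $f^{(k)}\equiv 0$ for $k>j$ and $f^{(j'-1)}(1)=0$ unless $j'=j+1$. A power count shows that the second family of summands $[a_k f^{(k)}]^{(k-j')}\bar g^{(j'-1)}$ contributes a factor of $(1-x)^{n-k}$, which vanishes at $1$ for every $k\le j<n$ (and is identically zero for $k>j$). For the first family $[a_k \bar g^{(k)}]^{(k-j')}f^{(j'-1)}$, the product $a_k(x)\bar g^{(k)}(x)$ collapses to a constant multiple of $(1-x)^{n-1-j}(1+x)^{\beta+k}$; Leibniz-expanding the remaining $(k-j')$ derivatives and sending $x\to 1$ leaves only the term in which all $(n-1-j)$ $x$-derivatives hit the $(1-x)^{n-1-j}$ factor. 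This forces $k-j'=n-1-j$, which combined with $j'=j+1$ forces $k=n$.

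Evaluating this unique surviving term produces a non-zero rational multiple of
\[
2^{\beta+n}\cdot j!\cdot (n-1-j)!\cdot \prod_{i=0}^{n-1}(-\al+n-1-j-i),
\]
which is non-zero whenever no factor in the product vanishes; for $\al\in(0,1)$ and $0\le j<n$ each factor is a non-integer, so the pairing is non-zero and the desired conclusion follows. The statement for $\psi_j^+$ is then immediate from the skew-Hermitian property, and the statements near $-1$ follow by the $\al\leftrightarrow\beta$, $(1-x)\leftrightarrow(1+x)$ symmetry.

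The principal obstacle is the combinatorial bookkeeping required to isolate the unique surviving term in the double sum of \eqref{e-njacobisesqui}: every other summand must be verified to vanish, either because a polynomial derivative is already zero or because a strictly positive surplus power of $(1-x)$ drives the limit to zero. A minor auxiliary issue is the Legendre limit $\al=0$, in which $\psi_j^+\equiv\f_j^+$ and the displayed product contains a zero factor; in that degenerate case the companion function must be replaced, for instance by a smoothly truncated version of $(1-x)^{n-1-j}\log(1-x)$, which is readily seen to lie in $\cD\ti{max}^{{\bf J},n}$ (as the Legendre expression applied to $\log(1-x)$ is polynomial) and yields the required non-vanishing contribution through the logarithmic derivative in place of the $-\al$ factor.
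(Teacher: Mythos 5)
Your proposal is correct and follows essentially the same route as the paper's own proof: you test $\f_j^+$ against the companion $\psi_{n-1-j}^+$ (handling the $-1$ endpoint by symmetry and the $\psi$-statement by antisymmetry of the form), split \eqref{e-njacobisesqui} into the same two families of terms, and isolate the single surviving term at $k=n$, $j'=j+1$, exactly as the paper does. Beyond the paper, you also compute the surviving constant explicitly and patch the degenerate case $\al=0$ (where $\psi_j^+=\f_j^+$ and the chosen pairing vanishes) with a logarithmic companion; the paper's proof tacitly needs $\al,\beta\in(0,1)$ for its ``nonzero, finite constant,'' so this extra care is a genuine, if minor, improvement.
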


\begin{proof}
We will prove the theorem for the functions $\f_j^+$ and $\psi_j^+$; the analogous result will follow for the functions $\f_j^-$ and $\psi_j^-$. Fix $n$ and let $y\in\NN_0$ such that $y<n$. Functions in the minimal domain yield a value of $0$ when paired in the sesquilinear form with any other function from the maximal domain, so the strategy will be to show that a nonzero value is achieved when paired against one function in particular. By definition, 
\begin{align*}
    \left[\f_y^+,\psi_{n-1-y}^+\right]_n(-1)=0.
\end{align*}
Thus, consider $\left[\f_y^+,\psi_{n-1-y}^+\right]_n(1)$. As in the proof of Theorem \ref{t-njacobiminimal}, we break the sesquilinear form up into two pieces:
\begin{align*}
    P(x):=&\sum_{k=1}^n\sum_{j=1}^k(-1)^{k+j}\left[(1-x)^{\al+k}(1+x)^{\beta+k}\left[(1-x)^{-\al+n-1-y}\right]^{(k)}\right]^{(k-j)}\left[(1-x)^y\right]^{(j-1)}, \\
    N(x):=&\sum_{k=1}^n\sum_{j=1}^k(-1)^{k+j}\left[(1-x)^{\al+k}(1+x)^{\beta+k}\left[(1-x)^y\right]^{(k)}\right]^{(k-j)}\left[(1-x)^{-\al+n-1-y}\right]^{(j-1)}.
\end{align*}
This yields
\begin{align*}
    \left[\f_y^+,\psi_{n-1-y}^+\right]_n(1)=\lim_{x\to 1^-}P(x)-N(x).
\end{align*}
The explicit values of constants here do not seem to be valuable, so we don't present the intermediate steps in the following calculations. First,
\begin{align*}
    \lim_{x\to 1^-}P(x)&\approx\lim_{x\to 1^-}\sum_{k=1}^n\sum_{j=1}^k\left[(1+x)^{\beta+k}(1-x)^{n-1-y}\right]^{(k-j)}(1-x)^{y-j+1} ~\text{ for }~y\geq j-1, \\
    &\approx\lim_{x\to 1^-}\sum_{k=1}^n\sum_{j=1}^k\sum_{s=0}^{k-j}(1+x)^{\beta+k-s}(1-x)^{n+s-k} ~\text{ for }~n-1-y\geq k-j-s.
\end{align*}
If either of the conditions above are not met, then the term for such $j$ and $k$ is 0. It is also evident that the exponent $n+s-k$ will now make all other terms 0 in the limit unless $s=0$ and $k=n$. The two conditions above can thus be rewritten as $j\leq y+1$ and $j\geq y+1$. We can thus conclude that a single finite term remains, when $k=n$ and $j=y+1$. 
Second, 
\begin{align*}
    \lim_{x\to 1^-}N(x)&\approx\lim_{x\to 1^-}\sum_{k=1}^n\sum_{j=1}^k\left[(1+x)^{\beta+k}(1-x)^{\al+y}\right]^{(k-j)}(1-x)^{-\al+n-y-j} ~\text{ for }~y\geq k, \\
    &\approx\lim_{x\to 1^-}\sum_{k=1}^n\sum_{j=1}^k\sum_{t=0}^{k-j}(1+x)^{\beta+k-t}(1-x)^{n+t-k},
\end{align*}
where again if $y<k$ terms are 0. However, the only way the exponent $n+t-k=0$ is if $t=0$ and $k=n$. This contradicts our earlier assumption though, as $y\leq n-1$. We can conclude that all terms are 0 and the limit of $N(x)$ as $x\to 1^-$ is zero. In conclusion, we have shown that
\begin{align*}
    \left[\f_y^+,\psi_{n-1-y}^+\right]_n\Big|_{-1}^1= c(\al,\beta,y,n),
\end{align*}
a nonzero, finite constant depending on $\al$, $\beta$ $y$ and $n$. The theorem follows. 
\end{proof}

The functions $\left\{\f_j^{\pm}\right\}_{j=0}^{n-1}$ clearly have a special interaction with the functions $\left\{\psi_j^{\pm}\right\}_{j=0}^{n-1}$. More can be said when the indices of these functions add to be more than $n$. 

\begin{lem}\label{l-overn}
Let $s,t\in\NN_0$ such that $s,t\leq n$. If $s+t\geq n$, then
\begin{align*}
    \left[\f_s^+,\psi_t^+\right]_n\Big|_{-1}^1=\left[\f_s^-,\psi_t^-\right]_n\Big|_{-1}^1=0.
\end{align*}
\end{lem}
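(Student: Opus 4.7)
The strategy mirrors the proof of Theorem \ref{t-secondkinddefect}: decompose the sesquilinear form \eqref{e-njacobisesqui} into two sums $P(x)$ and $N(x)$ and track the order of vanishing of $(1-x)$ in each summand. Observe first that $\f_s^+$ and $\psi_t^+$ both vanish identically in a neighborhood of $-1$, so $\left[\f_s^+,\psi_t^+\right]_n(-1)=0$ for free, and symmetrically $\left[\f_s^-,\psi_t^-\right]_n(1)=0$. By the $(1-x)\leftrightarrow(1+x)$, $\al\leftrightarrow\beta$ symmetry of $\ell_{\bf J}^n$ and of \eqref{e-njacobisesqui}, the evaluation $\left[\f_s^-,\psi_t^-\right]_n(-1)$ reduces to $\left[\f_s^+,\psi_t^+\right]_n(1)$ (with $\al$ replaced by $\beta$), so it suffices to prove that the latter vanishes whenever $s,t\leq n$ and $s+t\geq n$.

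With $f=\f_s^+$ and $g=\psi_t^+$ (both real-valued), write $\left[\f_s^+,\psi_t^+\right]_n(x)=P(x)-N(x)$ near $x=1$, where
\begin{align*}
P(x)&:=\sum_{k=1}^n\sum_{j=1}^k(-1)^{k+j}\left[(1-x)^{\al+k}(1+x)^{\beta+k}\left[(1-x)^{-\al+t}\right]^{(k)}\right]^{(k-j)}\left[(1-x)^s\right]^{(j-1)},\\
N(x)&:=\sum_{k=1}^n\sum_{j=1}^k(-1)^{k+j}\left[(1-x)^{\al+k}(1+x)^{\beta+k}\left[(1-x)^{s}\right]^{(k)}\right]^{(k-j)}\left[(1-x)^{-\al+t}\right]^{(j-1)}.
\end{align*}
I would then treat $(1+x)^{\beta+k}$ and its derivatives as bounded analytic factors near $x=1$ and focus on powers of $(1-x)$. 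Inside the bracket in $P$, cancellation gives $(1-x)^{\al+k}\cdot(1-x)^{-\al+t-k}=(1-x)^{t}$; the outer $(k-j)$ derivatives in the worst case produce $(1-x)^{t-k+j}$, and multiplying by $[(1-x)^{s}]^{(j-1)}\approx(1-x)^{s-j+1}$ yields leading behavior $(1-x)^{s+t-k+1}$. An entirely parallel computation for $N$---starting from $(1-x)^{\al+k}\cdot(1-x)^{s-k}=(1-x)^{\al+s}$---produces the same exponent $(1-x)^{s+t-k+1}$. Summands in which a derivative annihilates a polynomial factor (e.g.~when $k>s$ in $N$, or when $\al=0$ and $k>t$ in $P$) vanish identically and only improve the bound.

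Under the hypothesis $s+t\geq n\geq k$ we have $s+t-k+1\geq 1>0$, so every surviving summand of both $P(x)$ and $N(x)$ tends to $0$ as $x\to 1^-$. Therefore $\left[\f_s^+,\psi_t^+\right]_n(1)=0$, and the lemma follows. The main obstacle is the exponent bookkeeping: one must verify that the Leibniz rule applied to the outer $(k-j)$-fold differentiation never yields a power of $(1-x)$ strictly less than $s+t-k+1$. This is handled by noting that derivatives landing on the $(1+x)^{\beta+k}$ factor only produce bounded analytic terms near $x=1$ and leave the $(1-x)$-exponent unchanged, so the worst case is always when all outer derivatives fall on the $(1-x)$ powers, which is precisely the case computed above.
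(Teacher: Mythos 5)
Your proposal is correct and follows essentially the same route as the paper: the evaluation at the endpoint where the test functions vanish is zero by definition, the form at the other endpoint is split into the same two sums $P(x)$ and $N(x)$, and the Leibniz-rule bookkeeping shows every surviving summand carries a factor $(1-x)^{s+t-k+i+1}$ with minimum exponent $s+t-n+1\geq 1$, forcing both limits to vanish. The paper likewise treats the $(1+x)^{\beta+k}$ factors as harmless bounded contributions near $x=1$ and dispatches the $\f_s^-,\psi_t^-$ case by the analogous argument at $x=-1$, so no substantive difference remains.
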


\begin{proof}
We will prove that
\begin{align}\label{e-overn}
    \left[\f_s^+,\psi_t^+\right]_n\Big|_{-1}^1=0,
\end{align}
and the analogous result will follow for the functions defined by their asymptotics at $x=-1$. Fix $n$ and let $s,t\in\NN_0$ such that $s+t\geq n$. Consider equation \eqref{e-overn} and notice that evaluation at the endpoint $x=-1$ is 0 by definition. Break down the sesquilinear form at the endpoint $x=1$ into two parts, as in the proof of Theorem \ref{t-secondkinddefect}:
\begin{align*}
    P(x):=&\sum_{k=1}^n\sum_{j=1}^k(-1)^{k+j}\left[(1-x)^{\al+k}(1+x)^{\beta+k}\left[(1-x)^{-\al+t}\right]^{(k)}\right]^{(k-j)}\left[(1-x)^s\right]^{(j-1)}, \\
    N(x):=&\sum_{k=1}^n\sum_{j=1}^k(-1)^{k+j}\left[(1-x)^{\al+k}(1+x)^{\beta+k}\left[(1-x)^s\right]^{(k)}\right]^{(k-j)}\left[(1-x)^{-\al+t}\right]^{(j-1)}.
\end{align*}
We calculate that
\begin{align*}
    \lim_{x\to 1^-}P(x)&\approx\lim_{x\to 1^-}\sum_{k=1}^n\sum_{j=1}^k\left[(1+x)^{\beta+k}(1-x)^t\right]^{(k-j)}(1-x)^{s-j+1} ~\text{ for }~s\geq j-1, \\
    &\approx\lim_{x\to 1^-}\sum_{k=1}^n\sum_{j=1}^k\sum_{i=0}^{k-j}(1+x)^{\beta+k-i}(1-x)^{s+t-k+i+1} ~\text{ for }~t\geq k-j+i.
\end{align*}
The exponent $s+t-k+i+1$, noting the hypothesis $s+t\geq n$ and the ranges of $k$ and $i$, has a minimum value of 1. Hence, the limit is 0 because powers of $(1-x)$ will remain in each term of the last sum. Similarly,
\begin{align*}
    \lim_{x\to 1^-}N(x)&\approx\lim_{x\to 1^-}\sum_{k=1}^n\sum_{j=1}^k\left[(1+x)^{\beta+k}(1-x)^{\al+s}\right]^{(k-j)}(1-x)^{-\al+t-j+1} ~\text{ for }~s\geq k, \\
    &\approx\lim_{x\to 1^-}\sum_{k=1}^n\sum_{j=1}^k\sum_{i=0}^{k-j}(1+x)^{\beta+k-i}(1-x)^{s+t-k+i+1}.
\end{align*}
The condition $s\geq k$ immediately implies that the limit is 0 because powers of $(1-x)$ will survive in each term of the sum. Thus, equation \eqref{e-overn} is proven and the lemma follows. 
\end{proof}

Theorem \ref{t-firstjacobiany} revealed that the functions $\f_j$ were building blocks of the Jacobi polynomials near the endpoints. Orthogonal functions, like the Jacobi polynomials, will be zero when plugged into the sesquilinear form, as it is defined by Green's formula \eqref{e-greens} as the difference of two inner products. While the functions $\f_j$ are not orthogonal, they still retain this property of eliminating each other in the sesquilinear form. 

\begin{lem}\label{l-upperleft}
Let $s,t\in\NN_0$ be such that $s,t<n$. Then
\begin{align*}
\left[\f_s^+,\f_t^+\right]_n\Big|_{-1}^1=\left[\f_s^-,\f_t^-\right]_n\Big|_{-1}^1=0.
\end{align*}
\end{lem}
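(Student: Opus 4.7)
The plan is to handle the two stated identities separately but by the same mechanism; I focus on $[\f_s^+, \f_t^+]_n|_{-1}^1 = 0$, with the other identity following by an identical argument with $1$ and $-1$ (and $\al$ and $\beta$) interchanged.

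First, $[\f_s^+, \f_t^+]_n(-1) = 0$ is for free: by definition \eqref{e-firstkind}, both $\f_s^+$ and $\f_t^+$ vanish identically in a right-neighborhood of $-1$, so every derivative appearing in every summand of the sesquilinear form \eqref{e-njacobisesqui} is zero at that endpoint. It thus remains to show $[\f_s^+, \f_t^+]_n(1) = 0$.

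For the evaluation at $x = 1$ I would substitute $(1-x)^s$ for $\f_s^+$ and $(1-x)^t$ for $\f_t^+$ (valid since both functions agree with these polynomials in a left-neighborhood of $1$) and, exactly as in the proofs of Theorem \ref{t-secondkinddefect} and Lemma \ref{l-overn}, split the sesquilinear form as $P(x) - N(x)$ with
\begin{align*}
P(x) &= \sum_{k=1}^n \sum_{j=1}^k (-1)^{k+j} \left[ a_k(x) \, [(1-x)^t]^{(k)} \right]^{(k-j)} [(1-x)^s]^{(j-1)}, \\
N(x) &= \sum_{k=1}^n \sum_{j=1}^k (-1)^{k+j} \left[ a_k(x) \, [(1-x)^s]^{(k)} \right]^{(k-j)} [(1-x)^t]^{(j-1)},
\end{align*}
where $a_k(x) = (1-x)^{\al+k}(1+x)^{\beta+k}$. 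The key observation is that a summand of $P(x)$ is nonzero only when $k \le t$, in which case $a_k(x) \, [(1-x)^t]^{(k)}$ collapses to a constant multiple of $(1-x)^{\al+t}(1+x)^{\beta+k}$. Applying $k-j$ further derivatives via Leibniz and multiplying by $[(1-x)^s]^{(j-1)}$ (itself nonzero only if $j - 1 \le s$) then produces terms proportional to $(1-x)^{\al + s + t - a - j + 1}(1+x)^{\beta + k - b}$ with $a + b = k - j$ and $0 \le a \le k-j$. Since $a \le k - j$ and $k \le t$, the exponent of $(1-x)$ is bounded below by $\al + s + t - k + 1 \ge \al + s + 1 > 0$, so every summand vanishes as $x \to 1^-$. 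The argument for $N(x)$ is the mirror image obtained by interchanging the roles of $s$ and $t$; there the constraint $k \le s$ forces a $(1-x)$-exponent of at least $\al + t + 1 > 0$, and the same conclusion follows.

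The main obstacle, as in the earlier lemmas of this section, is bookkeeping rather than analysis: one must keep track of the ranges of $k$ and $j$, note which Leibniz sub-terms vanish because a polynomial of degree $s$ or $t$ is differentiated too many times, and extract the sharp lower bound on the $(1-x)$-exponent in the surviving terms. The hypothesis $s, t < n$ is not strictly needed in the computation itself; what drives the vanishing is simply that the extra $(1-x)^{\al+k}$ contributed by $a_k$ has no negative-index companion (as there would be from a $\psi_t^+$ factor) to cancel it, leaving each term with strictly positive $(1-x)$-exponent at the endpoint $1$. The same count, with the roles of $\al$ and $\beta$ and of $1$ and $-1$ interchanged, takes care of $[\f_s^-, \f_t^-]_n|_{-1}^1$.
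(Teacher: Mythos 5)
Your proposal is correct and follows essentially the same route as the paper: evaluation at $-1$ vanishes by the cutoff definition, and at $1$ you split the form into the same $P(x)$ and $N(x)$, expand by Leibniz, and bound the $(1-x)$-exponent below by $\al+s+1$ (resp.\ $\al+t+1$), exactly as in the paper's argument. Your side remark that the hypothesis $s,t<n$ is not needed for the computation is also accurate and harmless.
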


\begin{proof}
We will prove that the expression 
\begin{align}\label{e-upperleft}
    \left[\f_s^+,\f_t^+\right]_n\Big|_{-1}^1=0,
\end{align}
and the analogous result will follow for the functions defined by their asymptotics at $x=-1$. Fix $n$ and let $s,t\in\NN_0$ such that $s,t<n$. Consider equation \eqref{e-upperleft} and notice that evaluation at the endpoint $x=-1$ is 0 by definition. Break down the sesquilinear form at the endpoint $x=1$ into two parts, as in the proof of Theorem \ref{t-secondkinddefect}:
\begin{align*}
    P(x):=&\sum_{k=1}^n\sum_{j=1}^k(-1)^{k+j}\left[(1-x)^{\al+k}(1+x)^{\beta+k}\left[(1-x)^t\right]^{(k)}\right]^{(k-j)}\left[(1-x)^s\right]^{(j-1)}, \\
    N(x):=&\sum_{k=1}^n\sum_{j=1}^k(-1)^{k+j}\left[(1-x)^{\al+k}(1+x)^{\beta+k}\left[(1-x)^s\right]^{(k)}\right]^{(k-j)}\left[(1-x)^t\right]^{(j-1)}.
\end{align*}
We calculate that
\begin{align*}
    \lim_{x\to 1^-}P(x)&\approx\lim_{x\to 1^-}\sum_{k=1}^n\sum_{j=1}^k\left[(1+x)^{\beta+k}(1-x)^{\al+t}\right]^{(k-j)}(1-x)^{s-j+1} ~\text{ for }~t\geq k,~s\geq j-1 \\
    &\approx\lim_{x\to 1^-}\sum_{k=1}^n\sum_{j=1}^k\sum_{i=0}^{k-j}(1+x)^{\beta+k-i}(1-x)^{\al+s+t-k+i+1}.
\end{align*}
The exponent $\al+s+t-k+i+1$, noting the hypothesis $t\geq k$ and the ranges of $s$ and $i$, has a minimum value of $\al+1$. Hence, the limit is 0 because powers of $(1-x)$ will remain in each term of the last sum. Similarly,
\begin{align*}
    \lim_{x\to 1^-}N(x)&\approx\lim_{x\to 1^-}\sum_{k=1}^n\sum_{j=1}^k\left[(1+x)^{\beta+k}(1-x)^{\al+s}\right]^{(k-j)}(1-x)^{t-j+1} ~\text{ for }~s\geq k,~t\geq j-1 \\
    &\approx\lim_{x\to 1^-}\sum_{k=1}^n\sum_{j=1}^k\sum_{i=0}^{k-j}(1+x)^{\beta+k-i}(1-x)^{\al+s+t-k+i+1}.
\end{align*}
The condition $s\geq k$ again implies that the limit is 0 because powers of $(1-x)$ will survive in each term of the sum. Thus, equation \eqref{e-upperleft} is proven and the lemma follows. 
\end{proof}

It does not appear that the functions $\psi_j$ can be used in the sesquilinear form, in some order, to discern properties of the maximal domain similar to Theorem \ref{t-maxdomaindecomp}. It is also unclear what happens when two $\psi_j$ functions are paired in the sesquilinear form. However, the interactions discovered so far are strong enough to have important repercussions for self-adjoint extensions of the operator $\ell_{\bf J}^n$.

\subsection{Self-Adjoint Extensions}\label{ss-nextensions}

We now use the set of functions that we have found to be in $\cD\ti{max}^{{\bf J},n}\backslash\cD\ti{min}^{{\bf J},n}$ at each endpoint to define two finite-dimensional subspaces of $\cD\ti{max}^{{\bf J},n}$:

\begin{align*} 
{\bf D}^n_-:=\spa\left\{\left\{\f_j^-\right\}_{j=0}^{n-1},\left\{\psi_j^-\right\}_{j=0}^{n-1}\right\}, ~~{\bf D}^n_+:=\spa\left\{\left\{\f_j^+\right\}_{j=0}^{n-1},\left\{\psi_j^+\right\}_{j=0}^{n-1}\right\}.
\end{align*}

We first show that the basis functions in each of the two spaces are linearly independent modulo the minimal domain $\cD\ti{min}^{{\bf J},n}$. As the two subspaces themselves are clearly linearly independent, each basis function being 0 at the other endpoint, this will result in showing that 
\begin{align*}
    \dim\left({\bf D}^n_-\dotplus{\bf D}^n_+\right)=4n.
    \end{align*}

\begin{theo}\label{t-basislinind}
The basis functions of ${\bf D}^n_-$ and ${\bf D}^n_+$ are all linearly independent modulo the minimal domain $\cD\ti{min}^{{\bf J},n}$.
\end{theo}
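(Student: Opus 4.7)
The plan is to suppose a linear combination of the $4n$ basis functions lies in $\mathcal{D}^{\mathbf{J},n}_{\min}$, and then exploit the pairing structure already established in Theorem~\ref{t-secondkinddefect} and Lemmas~\ref{l-overn}, \ref{l-upperleft} to force every coefficient to vanish. Specifically, write
\begin{align*}
    f = \sum_{j=0}^{n-1} a_j \varphi_j^+ + \sum_{j=0}^{n-1} b_j \psi_j^+ + \sum_{j=0}^{n-1} c_j \varphi_j^- + \sum_{j=0}^{n-1} d_j \psi_j^-
\end{align*}
and assume $f \in \mathcal{D}^{\mathbf{J},n}_{\min}$, so that $[f,g]_n\big|_{-1}^1 = 0$ for every $g \in \mathcal{D}^{\mathbf{J},n}_{\max}$. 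Because the ``$+$'' basis functions vanish near $-1$ and the ``$-$'' basis functions vanish near $+1$, testing against $g = \varphi_k^+$ or $g = \psi_k^+$ collapses the form to a contribution at $+1$ involving only the $a_j$ and $b_j$ unknowns; similarly the $-1$ endpoint decouples into a system involving only $c_j$ and $d_j$. Thus we may analyze the two endpoints independently.

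At the $+1$ endpoint, first test with $g = \varphi_k^+$ for $k = 0, \dots, n-1$. By Lemma~\ref{l-upperleft} the terms $[\varphi_j^+, \varphi_k^+]_n\big|_{-1}^1$ vanish, so the equation $[f, \varphi_k^+]_n(1) = 0$ reduces to
\begin{align*}
    \sum_{j=0}^{n-1} b_j\, M_{kj} = 0, \qquad M_{kj} := [\psi_j^+, \varphi_k^+]_n(1).
\end{align*}
Using the antisymmetry $[\cdot,\cdot]_n = -[\cdot,\cdot]_n$ (on the real-valued test functions $\varphi_k^+, \psi_j^+$) we have $M_{kj} = -[\varphi_k^+, \psi_j^+]_n\big|_{-1}^1$, which by Lemma~\ref{l-overn} vanishes whenever $k+j \geq n$, and by Theorem~\ref{t-secondkinddefect} is a nonzero constant when $k + j = n-1$. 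Hence $M$ is anti-triangular with nonzero anti-diagonal, so $\det M = \pm \prod_{k=0}^{n-1} M_{k,n-1-k} \neq 0$, and we conclude $b_0 = \cdots = b_{n-1} = 0$.

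With $b \equiv 0$, next test against $g = \psi_k^+$ for $k = 0, \dots, n-1$. The equation $[f, \psi_k^+]_n(1) = 0$ becomes $\sum_{j=0}^{n-1} a_j\, N_{kj} = 0$ with $N_{kj} := [\varphi_j^+, \psi_k^+]_n(1)$, and this matrix $N$ is again anti-triangular with nonzero anti-diagonal by Lemma~\ref{l-overn} and Theorem~\ref{t-secondkinddefect}. Therefore $a_j = 0$ for all $j$. An entirely parallel argument at the $-1$ endpoint, using the ``$-$'' analogues stated in Theorem~\ref{t-secondkinddefect} and Lemmas~\ref{l-overn}, \ref{l-upperleft}, yields $c_j = d_j = 0$. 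The main (really only) technical point is verifying that the anti-triangular structure of the pairing matrix follows cleanly from the three preceding lemmas; the rest is bookkeeping about which endpoint each basis function survives at.
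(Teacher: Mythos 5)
Your proof is correct and follows essentially the same route as the paper: the paper assembles the same pairings into the $2n\times 2n$ matrix ${\bf M}_n$ of equation \eqref{e-M} and concludes full rank from Lemma \ref{l-upperleft} (zero block), Lemma \ref{l-overn} and the nonvanishing anti-diagonal pairings from the proof of Theorem \ref{t-secondkinddefect}, which is exactly your anti-triangular structure. The only cosmetic differences are that you unwind Definition \ref{d-linind} by hand (eliminating the $b_j$ before the $a_j$, thereby sidestepping the unknown $[\psi_j^+,\psi_k^+]$ pairings) where the paper instead cites \cite[Prop.~2.14]{FFL}, and note the slip in writing the antisymmetry, which should read $[f,g]_n=-[g,f]_n$ for real-valued $f,g$.
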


\begin{proof}
We will show the result for the basis functions of ${\bf D}^n_+$, and the proof for the basis functions of ${\bf D}^n_-$ will follow analogously. Fix $n$ and consider the matrix of sesquilinear forms\footnote{The ``+'' superscript is suppressed for all functions in the matrix for the sake of simplicity.}, similar to that in \cite[Equation 3.1]{FFL}, defined by
\begin{align}\label{e-M}
\sbox0{$\begin{matrix}[\f_0,\f_0] & \dots & [\f_0,\f_{n-1}] \\ \vdots & \ddots & \vdots \\ [\f_{n-1},\f_0] & \dots & [\f_{n-1},\f_{n-1}]\end{matrix}$}
\sbox1{$\begin{matrix}[\f_0,\psi_0] & \dots & [\f_0,\psi_{n-1}] \\ \vdots & \ddots & \vdots \\ [\f_{n-1},\psi_0] & \dots & [\f_{n-1},\psi_{n-1}]\end{matrix}$}
\sbox2{$\begin{matrix}[\psi_0,\f_0] & \dots & [\psi_0,\f_{n-1}] \\ \vdots & \ddots & \vdots \\ [\psi_{n-1},\f_0] & \dots & [\psi_{n-1},\f_{n-1}]\end{matrix}$}
\sbox3{$\begin{matrix}[\psi_0,\psi_0] & \dots & [\psi_0,\psi_{n-1}] \\ \vdots & \ddots & \vdots \\ [\psi_{n-1},\psi_0] & \dots & [\psi_{n-1},\psi_{n-1}]\end{matrix}$}
{\bf M}_n:=\left(
\begin{array}{c|c}
\usebox{0}&\usebox{1}\vspace{-.35cm}\\\\
\hline\vspace{-.35cm}\\
 \vphantom{\usebox{0}}\usebox{2}&\usebox{3}
\end{array}
\right),
\end{align}
where each sesquilinear form is evaluated from $-1$ to $1$. If the matrix ${\bf M}_n$ has full rank, then the vectors $\left\{\f_j^+\right\}_{j=0}^{n-1}$ and $\left\{\psi_j^+\right\}_{j=0}^{n-1}$ are all linearly independent modulo the minimal domain by \cite[Prop. 2.14]{FFL}. Lemma \ref{l-overn} says that the upper-right and lower-left blocks of ${\bf M}_n$ are upper diagonal matrices, and Theorem \ref{t-secondkinddefect} says that those diagonal entries are nonzero. Lemma \ref{l-upperleft} additionally says that the upper-left block consists of only zeros. We conclude that ${\bf M}_n$ has full rank and the result follows. 
\end{proof}

Theorem \ref{t-secondkinddefect} shows that the $\f_j$ and $\psi_j$ functions are not in the minimal domain associated with powers of the Jacobi differential equation, but are in the maximal domain. Theorem \ref{t-basislinind} then says that these functions span a $4n$ dimensional subspace of the maximal domain. Thus, we can appeal to equation \eqref{e-vN}, which says that there are only $4n$ dimensions in the maximal domain modulo the minimal domain and draw additional conclusions.

\begin{cor}\label{c-defectspacesbasis}
The defect spaces $\cD_+^{{\bf J}, n}\dotplus\cD_-^{{\bf J}, n}={\bf D}^n_-\dotplus{\bf D}^n_+$.
\end{cor}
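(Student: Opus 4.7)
The plan is to derive the corollary by a dimension count, combined with von Neumann's formula and Theorem \ref{t-basislinind}. The equality should be interpreted in the sense of providing the same complement to the minimal domain inside the maximal domain (or equivalently, equality modulo $\cD\ti{min}^{{\bf J},n}$): the key point is that ${\bf D}^n_+\dotplus{\bf D}^n_-$ can replace the classical defect spaces in the von Neumann decomposition of $\cD\ti{max}^{{\bf J},n}$.

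First I would invoke Theorem \ref{t-vN} to write
\begin{equation*}
\cD\ti{max}^{{\bf J},n} = \cD\ti{min}^{{\bf J},n} \dotplus \cD_+^{{\bf J},n} \dotplus \cD_-^{{\bf J},n}.
\end{equation*}
Since the deficiency indices of $\ell_{{\bf J}}^n$ are $(2n,2n)$, we have $\dim(\cD_+^{{\bf J},n}\dotplus \cD_-^{{\bf J},n}) = 4n$, and this subspace meets $\cD\ti{min}^{{\bf J},n}$ trivially. Second, from Theorem \ref{t-basislinind}, the $4n$ generators $\{\f_j^+\}_{j=0}^{n-1}\cup\{\psi_j^+\}_{j=0}^{n-1}\cup\{\f_j^-\}_{j=0}^{n-1}\cup\{\psi_j^-\}_{j=0}^{n-1}$ are linearly independent modulo $\cD\ti{min}^{{\bf J},n}$. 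Linear independence modulo $\cD\ti{min}^{{\bf J},n}$ forces $({\bf D}^n_+\dotplus{\bf D}^n_-)\cap \cD\ti{min}^{{\bf J},n}=\{0\}$, because any element of the intersection is a linear combination of the generators lying in $\cD\ti{min}^{{\bf J},n}$, whence all coefficients vanish. Consequently $\dim({\bf D}^n_+\dotplus{\bf D}^n_-)=4n$.

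Third, combining these facts, ${\bf D}^n_+\dotplus{\bf D}^n_-$ is a $4n$-dimensional subspace of $\cD\ti{max}^{{\bf J},n}$ with trivial intersection with $\cD\ti{min}^{{\bf J},n}$. But von Neumann's formula shows the quotient $\cD\ti{max}^{{\bf J},n}/\cD\ti{min}^{{\bf J},n}$ has dimension exactly $4n$. Therefore ${\bf D}^n_+\dotplus{\bf D}^n_-$ and $\cD_+^{{\bf J},n}\dotplus\cD_-^{{\bf J},n}$ both map isomorphically onto the full quotient, i.e.\
\begin{equation*}
\cD\ti{max}^{{\bf J},n} = \cD\ti{min}^{{\bf J},n} \dotplus ({\bf D}^n_+ \dotplus {\bf D}^n_-) = \cD\ti{min}^{{\bf J},n} \dotplus (\cD_+^{{\bf J},n}\dotplus\cD_-^{{\bf J},n}),
\end{equation*}
which is the identification asserted by the corollary.

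The only subtle point — and arguably the main conceptual obstacle — is the interpretation of the equality sign. Set-theoretically the functions $\f_j^\pm, \psi_j^\pm$ are clearly not solutions to $\ell_{{\bf J}}^{n,*} f = \pm i f$, so ${\bf D}^n_+\dotplus{\bf D}^n_-$ cannot equal $\cD_+^{{\bf J},n}\dotplus\cD_-^{{\bf J},n}$ pointwise. The proper reading, consistent with how GKN theory (Theorems \ref{t-gkn1}--\ref{t-gkn2}) uses deficiency elements only through their cosets modulo $\cD\ti{min}^{{\bf J},n}$, is that both direct sums furnish the same transversal space to $\cD\ti{min}^{{\bf J},n}$ inside $\cD\ti{max}^{{\bf J},n}$; this suffices for all subsequent applications to self-adjoint extensions. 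The argument above establishes precisely this identification, and no further properties of the classical deficiency elements are needed.
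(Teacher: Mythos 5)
Your argument is essentially the paper's own proof: both rest on Theorem \ref{t-basislinind} giving $4n$ directions linearly independent modulo $\cD\ti{min}^{{\bf J},n}$, von Neumann's formula \eqref{e-vN}, and a dimension count. Your added caveat about reading the equality modulo $\cD\ti{min}^{{\bf J},n}$ (since $\f_j^\pm,\psi_j^\pm$ are not literal deficiency elements) is in fact more careful than the paper's stated inclusion ${\bf D}^n_-\dotplus{\bf D}^n_+\subset\cD_+^{{\bf J},n}\dotplus\cD_-^{{\bf J},n}$, and is the correct way to justify the statement.
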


\begin{proof}
Fix $n\in\NN$. Theorem \ref{t-basislinind} says that ${\bf D}^n_-\dotplus{\bf D}^n_+$ is comprised of $4n$ dimensions in the maximal domain $\cD\ti{max}^{{\bf J},n}$ but not in the minimal domain $\cD\ti{min}^{{\bf J},n}$. Hence, ${\bf D}^n_-\dotplus{\bf D}^n_+\subset\cD_+^{{\bf J}, n}\dotplus\cD_-^{{\bf J}, n}$. Equation \ref{e-vN} and the fact that the dimensions of both spaces coincide then implies that the defect spaces $\cD_+^{{\bf J}, n}\dotplus\cD_-^{{\bf J}, n}$ are equal to ${\bf D}^n_-\dotplus{\bf D}^n_+$.
\end{proof}

Note that Theorem \ref{t-njacobiminimal} also now follows from Corollary \ref{c-defectspacesbasis}.

\begin{cor}\label{t-jacobilinind}
The collection $\left\{P^{(\al,\beta)}_m\right\}_{m=0}^{n-1}$ is linearly independent modulo the minimal domain.
\end{cor}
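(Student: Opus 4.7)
The plan is to push each Jacobi polynomial into the quotient $\cD\ti{max}^{{\bf J},n}/\cD\ti{min}^{{\bf J},n}$ using the basis supplied by Corollary \ref{c-defectspacesbasis}, and then exploit a triangular structure among the $\f_j^+$ coefficients. First, I would use the expansion \eqref{e-jacobihyper} to write $P_m^{(\al,\beta)}(x)=\sum_{j=0}^m a_j^{(m)}(1-x)^j$, noting that the leading coefficient $a_m^{(m)}=(-1/2)^m(m+\al+\beta+1)_m/m!$ is nonzero for $\al,\beta\ge 0$, since every factor in the Pochhammer product is strictly positive.

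Next I would decompose each power $(1-x)^j$ modulo the minimal domain. Using a smooth cutoff $\chi_+$ that equals $1$ near $x=1$ and $0$ near $x=-1$, write $(1-x)^j = \chi_+(x)(1-x)^j + (1-\chi_+(x))(1-x)^j$. The first summand lies in the equivalence class of $\f_j^+$. For the second, which vanishes near $1$, expand near $x=-1$ via $(1-x)^j=\sum_{i=0}^j (-1)^i\binom{j}{i}2^{j-i}(1+x)^i$; subtracting $\sum_{i=0}^j (-1)^i\binom{j}{i}2^{j-i}\f_i^-(x)$ leaves a smooth function with compact support in $(-1,1)$, and any such function is in $\cD\ti{min}^{{\bf J},n}$ since the sesquilinear form $[r,g]_n|_{-1}^{1}$ vanishes trivially. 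Hence
\begin{equation*}
(1-x)^j \;\equiv\; \f_j^+ \;+\; \sum_{i=0}^j (-1)^i\binom{j}{i}2^{j-i}\,\f_i^- \pmod{\cD\ti{min}^{{\bf J},n}},
\end{equation*}
and plugging this into the polynomial expansion gives
\begin{equation*}
P_m^{(\al,\beta)} \;\equiv\; \sum_{j=0}^m a_j^{(m)}\f_j^+ \;+\; \sum_{l=0}^m b_{m,l}\,\f_l^- \pmod{\cD\ti{min}^{{\bf J},n}}
\end{equation*}
for suitable constants $b_{m,l}$. Notice that no $\psi_j^{\pm}$ components appear.

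Finally, suppose $\sum_{m=0}^{n-1}c_m P_m^{(\al,\beta)}\in\cD\ti{min}^{{\bf J},n}$. By Corollary \ref{c-defectspacesbasis}, the classes $\{\f_j^+,\psi_j^+,\f_j^-,\psi_j^-\}_{j=0}^{n-1}$ form a basis of the quotient, so in particular the $\f_j^+$ coordinates must all vanish, yielding $\sum_{m=0}^{n-1} c_m\,a_j^{(m)}=0$ for each $j=0,\ldots,n-1$. The coefficient matrix $(a_j^{(m)})_{m,j=0}^{n-1}$ is lower triangular (since $a_j^{(m)}=0$ whenever $j>m$) with nonzero diagonal entries $a_m^{(m)}$, hence invertible, forcing $c_m=0$ for all $m$ and giving the claimed linear independence.

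The only mildly subtle point is the cutoff argument in the second step — verifying rigorously that the compactly supported smooth remainder lies in the minimal domain — but this is immediate from Definition \ref{d-min} since all boundary limits of the sesquilinear form vanish. The remainder of the argument is purely linear algebra, with the key input being the nonvanishing of the Pochhammer coefficient $a_m^{(m)}$ and the already-established basis from Corollary \ref{c-defectspacesbasis}.
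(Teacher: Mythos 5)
Your proof is correct and follows the same route as the paper, which simply cites the expansion \eqref{e-jacobihyper} together with Theorem \ref{t-basislinind} (equivalently Corollary \ref{c-defectspacesbasis}); you have merely made explicit the details the paper leaves implicit, namely reducing $(1-x)^j$ to $\f_j^+$ plus $\f_i^-$ terms modulo $\cD\ti{min}^{{\bf J},n}$ via a cutoff and the triviality of compactly supported functions, and then invoking the triangular coefficient matrix with nonzero Pochhammer diagonal. No changes needed.
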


\begin{proof}
The result is immediately implied by the decomposition of Jacobi polynomials in equation \eqref{e-jacobihyper} and Theorem \ref{t-basislinind}.
\end{proof}

Self-adjoint extension theory is centered around the defect spaces, so their decomposition into the span of simple functions offers a convenient interpretation of Glazman--Krein--Naimark Theory via Theorem \ref{t-gknmatrix}.

\begin{cor}\label{t-finallyamatrix}
All self-adjoint extensions of the symmetric differential expression $\ell^n\ti{{\bf J}}$ on $\cD\ti{min}^{{\bf J},n}$ can be expressed as an $2n\times 2n$ unitary matrix $U_n:{\bf D}_+^n\to {\bf D}_-^n$.
\end{cor}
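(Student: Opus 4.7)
The plan is to combine classical GKN theory (Theorem \ref{t-gknmatrix}) with the new description of the defect space in Corollary \ref{c-defectspacesbasis}, treating the statement as essentially a change-of-basis statement within von Neumann's formula \eqref{e-vN}. Since the minimal operator $\{\ell^n\ti{{\bf J}},\cD\ti{min}^{{\bf J},n}\}$ has deficiency indices $(2n,2n)$, Theorem \ref{t-gknmatrix} already gives a bijection between self-adjoint extensions and unitary $2n\times 2n$ matrices $u:\cD_+^{{\bf J},n}\to\cD_-^{{\bf J},n}$ acting on orthonormal bases of the classical defect spaces. What remains is to transfer this parameterization to a map $U_n:{\bf D}_+^n\to{\bf D}_-^n$.

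First, I would record the structural properties of the new basis that make the reformulation work. By Theorem \ref{t-basislinind} the $4n$ basis functions of ${\bf D}_+^n\dotplus{\bf D}_-^n$ are linearly independent modulo $\cD\ti{min}^{{\bf J},n}$, and by Corollary \ref{c-defectspacesbasis} they span the same $4n$-dimensional quotient $\cD\ti{max}^{{\bf J},n}/\cD\ti{min}^{{\bf J},n}$ as the classical deficiency elements. Moreover, because every element of ${\bf D}_+^n$ vanishes identically near $x=-1$ and every element of ${\bf D}_-^n$ vanishes identically near $x=+1$, the mixed sesquilinear pairings $[\phi,\psi]_n\big|_{-1}^{1}$ are zero for $\phi\in{\bf D}_+^n$, $\psi\in{\bf D}_-^n$. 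Thus the sesquilinear form on the quotient decomposes as an orthogonal sum of its restrictions to the two endpoint subspaces, each of which is nondegenerate thanks to the nonzero diagonal blocks of the matrix ${\bf M}_n$ identified in the proof of Theorem \ref{t-basislinind}.

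Second, I would apply GKN2 (Theorem \ref{t-gkn2}) directly in the new basis. Given a unitary $U_n:{\bf D}_+^n\to{\bf D}_-^n$ (with respect to the inner products obtained from the nondegenerate endpoint pairings above), take a basis $\{\phi_1,\dots,\phi_{2n}\}$ of ${\bf D}_+^n$ and form the hybrid functions $w_j:=\phi_j+U_n\phi_j\in{\bf D}_+^n\dotplus{\bf D}_-^n$. These are linearly independent modulo $\cD\ti{min}^{{\bf J},n}$, and a short computation using the vanishing of the cross pairings shows that the Glazman symmetry conditions $[w_j,w_k]_n\big|_{-1}^{1}=0$ are equivalent to the isometry property of $U_n$ with respect to the endpoint pairings. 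GKN2 then produces a self-adjoint extension, and GKN1 together with the dimension count guarantees that every self-adjoint extension of $\{\ell^n\ti{{\bf J}},\cD\ti{min}^{{\bf J},n}\}$ arises in this manner.

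The main obstacle I anticipate is bookkeeping: pinning down the precise inner products on ${\bf D}_\pm^n$ that turn ``unitary'' into the right condition, and showing that the map $u\mapsto U_n$ between the classical GKN matrix and the new one is a bijection of unitary groups. The cleanest route is probably to avoid identifying $U_n$ with $u$ explicitly and instead argue abstractly that the set of $2n$-dimensional subspaces of $\cD\ti{max}^{{\bf J},n}/\cD\ti{min}^{{\bf J},n}$ on which the sesquilinear form vanishes is in bijection with unitary maps between any two complementary ``isotropic-up-to-sign'' halves of the quotient — and that both $(\cD_+^{{\bf J},n},\cD_-^{{\bf J},n})$ and $({\bf D}_+^n,{\bf D}_-^n)$ are such halves, the latter by the endpoint-vanishing observation above.
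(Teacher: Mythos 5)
There is a genuine gap in the second half of your argument. Your plan is to realize every self-adjoint extension intrinsically in the new basis, via GKN2 applied to hybrid functions $w_j=\phi_j+U_n\phi_j$ with $\phi_j$ running over a basis of ${\bf D}_+^n$, i.e.\ as the graph of a bijection ${\bf D}_+^n\to{\bf D}_-^n$. But ${\bf D}_\pm^n$ are \emph{not} interchangeable with the true defect spaces $\cD_\pm^{{\bf J},n}$ for this purpose: what makes the classical graph construction exhaustive is that the form $i[\cdot,\cdot]_n$ is positive definite on $\cD_+^{{\bf J},n}$ and negative definite on $\cD_-^{{\bf J},n}$, so no self-adjoint domain meets either space nontrivially modulo $\cD\ti{min}^{{\bf J},n}$. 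The endpoint spaces fail this: by Lemma \ref{l-upperleft} the span of $\{\f_j^+\}_{j<n}$ is an $n$-dimensional isotropic subspace of ${\bf D}_+^n$, so the endpoint form (the matrix ${\bf M}_n$) is nondegenerate but of signature $(n,n)$, not definite. Concretely, the left-definite extension $\cD_{{\bf J}^n_{(\al,\beta)}}$ itself is spanned modulo the minimal domain by $\{\f_j^+,\f_j^-\}_{j=0}^{n-1}$; this Lagrangian subspace meets ${\bf D}_-^n$ in the $n$-dimensional span of the $\f_j^-$, hence its projection onto ${\bf D}_+^n$ along ${\bf D}_-^n$ has rank $n<2n$ and it is not the graph of any map ${\bf D}_+^n\to{\bf D}_-^n$. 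So your surjectivity claim (``GKN1 together with the dimension count guarantees every extension arises in this manner'') fails, and your final abstract paragraph is where the error hides: the endpoint-vanishing observation only shows the form is block diagonal with respect to ${\bf D}_+^n\dotplus{\bf D}_-^n$, not that these are definite (``isotropic-up-to-sign'') halves. Relatedly, since $[w_j,w_k]_n\big|_{-1}^1=[\phi_j,\phi_k]_n(1)+[U_n\phi_j,U_n\phi_k]_n(-1)$, the Glazman conditions ask $U_n$ to be an anti-isometry of two \emph{indefinite} forms, which is not the same as unitarity for any positive definite inner products you might put on ${\bf D}_\pm^n$.

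The paper's proof sidesteps all of this: it never attempts a graph construction in the new basis. It takes the classical GKN unitary $V(n):\cD_+^{{\bf J},n}\to\cD_-^{{\bf J},n}$ from Theorem \ref{t-gknmatrix}, uses Theorem \ref{t-basislinind}, Corollary \ref{c-defectspacesbasis} and von Neumann's formula \eqref{e-vN} to fix identification (change-of-basis) maps $U_\pm:\cD_\pm^{{\bf J},n}\to{\bf D}_\pm^n$, and defines $U_n:=U_-V(n)U_+^{-1}$, so the correspondence between extensions and matrices is by conjugation of the classical parameterization rather than by re-running GKN2 with ${\bf D}_\pm^n$ in place of the defect spaces. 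If you want to salvage an intrinsic endpoint description, you would need a boundary-triple/indefinite-form formulation (self-adjoint domains as Lagrangian planes for the signature-$(2n,2n)$ form on ${\bf D}_+^n\dotplus{\bf D}_-^n$), not maps from one endpoint space onto the other.
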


\begin{proof}
Every self-adjoint extension ${\bf L}=\left\{\ell^n\ti{{\bf J}},\cD_{{\bf L}}\right\}$ of the operator ${\bf L}\ti{min}=\left\{\ell^n\ti{{\bf J}},\cD\ti{min}^{{\bf J},n}\right\}$ can be characterized by means of a unitary $2n\times 2n$ matrix $V(2n):\cD_+^{{\bf J}, n}\to \cD_-^{{\bf J}, n}$ via Theorem \ref{t-gknmatrix}. Theorem \ref{t-basislinind} then says that ${\bf D}^n_-$ and ${\bf D}^n_+$ are each $2n$ subspaces of $\cD^{{\bf J},n}\ti{max}\backslash\cD^{{\bf J},n}\ti{min}$.

Equation \eqref{e-vN} and the fact that the dimensions of both spaces coincide allows us to define change of basis maps as the unitary matrices $U_+:\cD_+^{{\bf J}, n}\to {\bf D}^n_+$ and $U_-:\cD_-^{{\bf J}, n}\to {\bf D}^n_-$. The following commutative diagram holds:
\[ \begin{tikzcd}
\cD_+^{{\bf J}, n} \arrow{r}{V(n)} \arrow[swap]{d}{U_+} & \cD_-^{{\bf J}, n} \arrow{d}{U_-} \\%
{\bf D}^n_+ \arrow{r}{U(n)}& {\bf D}^n_-
\end{tikzcd}
\]
Therefore, $U(n)=U_-V(n)U_+^{-1}$ and is unitary.
\end{proof}

The structure of the self-adjoint extensions revealed in the previous Corollaries has several implications. Indeed, it has already been mentioned in equation \eqref{e-maxnesting} that the maximal domain associated with a power $p$ will be contained in the maximal domain associated with a power $q$ if $p<q$. Is there any similar assertion we can make for minimal domains or defect spaces? 

A priori, the answer is no. Definition \ref{d-defect} says that the defect spaces consist of solutions to the eigenvalue problem for one choice of $\lambda$ in the upper-half plane and one choice in the lower-half plane, although here it was given with $\lambda=\pm i$. Ordinary differential equations with two limit-circle endpoints, like Jacobi, have discrete spectrum though and solutions to the uncomposed equation are known. Let the two linearly independent solutions (called deficiency elements) be denoted by $\hat{\f}_{\la}$ and $\hat{\psi}_{\la}$, so that $\ell_{\al,\beta}[\hat{\f_{\la}}]=\la \f_{\la}$ and $\ell_{\al,\beta}[\hat{\psi_{\la}}]=\la \psi_{\la}$. Consider the change of deficiency elements between $n=2$ and $n=3$ when the defect spaces are set by $\la=\pm i$. They will have indices that are roots of unity of different orders but share no other obvious properties. 

Corollary \ref{c-defectspacesbasis} does not say that defect spaces are nested within each other like equation \eqref{e-maxnesting} because they are associated with different maximal domains. It does say that defect spaces change in a predictable, easily described way when $n$ is increased though: the possibilities for endpoint behavior increases, as $\f_j$ and $\psi_j$ are no longer in the minimal domain for larger $j$. 

\begin{rem}\label{r-choices}
The endpoint behavior exhibited by the functions $\f_j^+$, $\psi_j^+$, $\f_j^-$ and $\psi_j^-$, for $j\in\NN_0$, is chosen to approximate that of deficiency elements. To demonstrate, consider the two linearly independent solutions to the equation $\ell_{\al,\beta}[f]=\la f$. Near the endpoint $x=1$ \cite[Section 4.2]{Sz} they can be written as
\begin{align*}
    &\,_2F_1(-\mu,\mu+\al+\beta+1;\al+1;(1-x)/2), \\
   ((1-x)/2)^{-\al}&\,_2F_1(-\mu-\al,\mu+\beta+1;1-\al;(1-x)/2),
\end{align*}
and near $x=-1$ as
\begin{align*}
    &\,_2F_1(-\mu,\mu+\al+\beta+1;\beta+1;(1+x)/2), \\
   ((1+x)/2)^{-\beta}&\,_2F_1(-\mu-\beta,\mu+\al+1;1-\beta;(1+x)/2),
\end{align*}
where $\la=\mu(\mu+\al+\beta+1)$ and $\,_2F_1(a,b;c;x)$ denotes the Gauss hypergeometric series. Deficiency elements are simply these functions for specific choices of $\la$, as described above. If these series are all truncated so that only powers less than $n$ remain, the resulting behavior of $x$ is captured by linear combinations of the functions $\f_j^+$, $\psi_j^+$, $\f_j^-$ and $\psi_j^-$. Corollaries \ref{c-defectspacesbasis} and \ref{t-finallyamatrix} thus say that the only behavior of the deficiency elements contributing to the domain are those that come from our functions with $j<n$. 
\end{rem}

This structure can be used to describe domains of self-adjoint domains in different ways. Consider the self-adjoint operator ${\bf J}^n_{(\al,\beta)}$ acting via \eqref{e-njacobi} with domain that contains all of the Jacobi polynomials, $P_m^{(\al,\beta)}$. The domain of this operator can be written as
\begin{align*}
  \cD_{
  {\bf J}_{(\al,\beta)}^n}=\left\{f\in\cD\ti{max}^{(\al,\beta)}~:~\left[f,P^{(\al,\beta)}_j\right]_n(\pm 1)=0 \text{ for }j=0,\dots, n-1
  \right\}.
\end{align*}

This characterization is a consequence of Corollary \ref{t-jacobilinind} and the matrix constructions of \cite[Section 3]{FFL} which show the domain includes all of the desired functions. An in depth discussion of similar domains is included in Section \ref{s-equivalence}, where it is also shown that $\cD_{{\bf J}^n_{(\al,\beta)}}$ is the left-definite domain associated with the $n$th composition of the Jacobi differential equation.

\begin{lem}[{\cite[Definition 5.1]{EKLWY}}]\label{t-njacobiprop}
The left-definite domain associated with the $n$th composition of the Jacobi differential equation can be characterized as follows:
\begin{align*}
\cD_{{\bf J}_{(\al,\beta)}^n}=\left\{f\in\cD\ti{max}^{(\al,\beta)}~:~(1-x)^{j/2}(1+x)^{j/2}f^{(j)}\in L^2_{\al,\beta}(-1,1)\text{ for }j=0,\dots,2n\right\}.
\end{align*}
\end{lem}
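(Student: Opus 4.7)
The plan is to identify $\cD_{{\bf J}_{(\al,\beta)}^n}$ with the $2n$-th left-definite space $\cV_{2n}$ via Corollary~\ref{t-comppower}, which gives $\cV_{2n} = \cD(\mathbf{A}^n)$ where $\mathbf{A}$ is the self-adjoint Jacobi operator ${\bf J}^{(\al,\beta)}$ from Subsection~\ref{ss-jacobiexample}. Once this identification is made, the lemma reduces to showing that membership in $\cV_{2n}$ is equivalent to the stated weighted integrability conditions on derivatives of $f$ through order $2n$.

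First, I would derive an explicit sum-of-squares representation of the $2n$-th left-definite inner product. Starting from $\langle f, g\rangle_{2n} = \langle \mathbf{A}^n f, \mathbf{A}^n g\rangle_{L^2_{\al,\beta}}$ and applying integration by parts iteratively using the Lagrangian symmetric form \eqref{e-njacobi}, one obtains an identity of the shape
$$\langle f, g\rangle_{2n} = \sum_{k=0}^{2n} c(n,k,\al,\beta)\int_{-1}^{1}(1-x)^{\al+k}(1+x)^{\beta+k}\, f^{(k)}(x)\,\overline{g^{(k)}(x)}\,dx + [\text{bdry}],$$
with constants $c(n,k,\al,\beta)$ that are positive by the positive-definiteness of the left-definite form. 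The boundary contributions vanish on $\cD_{{\bf J}_{(\al,\beta)}^n}$: using Lemma~\ref{t-stronglimitpoint} inductively on the intermediate iterates $\mathbf{A}^j f$ for $0\le j<n$ (each of which lies in the self-adjoint domain $\cD_{{\bf J}^{(\al,\beta)}}$), every modified Wronskian pairing produced along the way dies at both endpoints.

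Given this representation, setting $g=f$ shows $\langle f,f\rangle_{2n}<\infty$ is equivalent, by nonnegativity of each integrand and positivity of the coefficients, to each weighted integral being finite, i.e.\ to $(1-x)^{k/2}(1+x)^{k/2}f^{(k)}\in L^2_{\al,\beta}(-1,1)$ for $k=0,\dots,2n$. Conversely, given such an $f\in\cD\ti{max}^{(\al,\beta)}$, expanding $\ell_{{\bf J}}^n[f]$ via \eqref{e-njacobi} and estimating each summand by Cauchy--Schwarz yields $\ell_{{\bf J}}^n[f]\in L^2_{\al,\beta}$, while Theorem~\ref{t-maxdomaindecomp} and the GKN boundary conditions singling out ${\bf J}_{(\al,\beta)}^n$ as the distinguished extension containing every Jacobi polynomial place $f$ back in $\cD_{{\bf J}_{(\al,\beta)}^n}$.

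The main obstacle is the combinatorial bookkeeping of boundary contributions through $n$ successive integrations by parts: each stage produces Wronskian-type expressions pairing intermediate derivatives of $f$ against the coefficients $a_k(x)=(1-x)^{\al+k}(1+x)^{\beta+k}$, and cancelling them requires the strong limit-point behaviour to propagate through all iterates $\mathbf{A}^j f$. This identification, together with the explicit formulas for the constants $c(n,k,\al,\beta)$, is the content of the derivation in \cite{EKLWY}, which I would invoke for the precise computation.
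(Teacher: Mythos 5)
The paper does not actually prove Lemma \ref{t-njacobiprop}: it is quoted wholesale from \cite[Definition 5.1, Corollary 5.1]{EKLWY}, so your final move of deferring the substantive computation to \cite{EKLWY} puts you on exactly the same footing as the paper, and in that sense there is no divergence to report. Your identification of the domain in question with $\cV_{2n}=\cD(\mathbf{A}^n)$ via Corollary \ref{t-comppower} is also the right framing (though note that the equality of the GKN-defined domain $\cD_{{\bf J}^n_{(\al,\beta)}}$ with the left-definite domain is itself established only in Section \ref{s-equivalence} of the paper, not by this lemma).

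If, however, your sketch were to stand as a self-contained proof, two steps would not survive scrutiny. First, the positivity of the coefficients $c(n,k,\al,\beta)$ (the Jacobi--Stirling numbers of \cite{EKLWY}) does not follow from positive-definiteness of the left-definite form: a positive form can admit representations with vanishing or negative coefficients, and it is precisely this positivity, proved separately in \cite{EKLWY}, that justifies the equivalence ``$\langle f,f\rangle_{2n}<\infty$ iff each weighted integral is finite.'' Second, the converse direction via termwise Cauchy--Schwarz has a genuine gap: expanding $[a_kf^{(k)}]^{(k)}$ by Leibniz and dividing by the weight produces terms $(1-x)^{k-i'}(1+x)^{k-i''}f^{(2k-i)}$ with $i'+i''=i$, whose membership in $L^2_{\al,\beta}(-1,1)$ requires control of $\int|f^{(2k-i)}|^2(1-x)^{2(k-i')+\al}(1+x)^{2(k-i'')+\beta}dx$, whereas the hypothesis with $j=2k-i$ only controls the exponents $2k-i+\al$ and $2k-i+\beta$; the termwise estimate closes only when $2i'\le i$ and $2i''\le i$, i.e.\ essentially never for the unbalanced terms with $i\ge1$. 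One needs Hardy-type inequalities, or the route actually taken in \cite{EKLWY}: show that the space defined by the weighted conditions is a Hilbert space under the sum-of-squares inner product in which the Jacobi polynomials are dense, and then invoke the uniqueness part of the Littlejohn--Wellman left-definite theory, rather than verifying $\ell^n_{\bf J}[f]\in L^2_{\al,\beta}(-1,1)$ directly. The claim that all boundary terms vanish by propagating the strong limit-point property through the iterates $\mathbf{A}^jf$ would likewise need an argument, since the pairings that arise are not all between elements of $\cD_{{\bf J}^{(\al,\beta)}}$.
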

 
Lemma \ref{t-njacobiprop} offers a comparison between the maximal domain, described in Theorem \ref{t-maxdomaindecomp}, and the left-definite domain. In other words, functions in the maximal domain are roughly {\em half} as regular as functions in the left-definite domain. This is the analog of the comparison made between Lemma \ref{t-stronglimitpoint} and Proposition \ref{t-jacobimax} for the ($n=1$) Jacobi operator.

The functions $\f_j^+$ and $\f_j^-$ are motivated by Jacobi polynomials, and it would be ideal to use Jacobi polynomials as boundary conditions. Therefore, it is necessary to compare the effect of using Jacobi polynomials and $\f_j^+$ and $\f_j^-$ as GKN boundary conditions.

\begin{prop}\label{t-stackingprop}
Let $m\in\NN_0$ and $f\in\cD^{{\bf J}, n}\ti{max}$. Assume $\left[f,P_j^{(\al,\beta)}\right]_n(\pm 1)=0$ for all $j=0,\dots,m$. Then $[f,(1-x)^m]_n(\pm 1)=[f,(1+x)^m]_n(\pm 1)=0$. 
\end{prop}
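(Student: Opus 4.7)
The plan is to reduce the statement to a triangular system obtained by expanding the Jacobi polynomials around each endpoint. First I would use the two expansions of $P_j^{(\al,\beta)}$: equation \eqref{e-jacobihyper} gives
\begin{align*}
P_j^{(\al,\beta)}(x)=\sum_{i=0}^j a_i^{(j)}(1-x)^i,
\end{align*}
while the transformation \cite[Eq.~15.8.7]{DLMF} gives an analogous expansion in powers of $(1+x)$, say $P_j^{(\al,\beta)}(x)=\sum_{i=0}^j b_i^{(j)}(1+x)^i$. In both expansions the leading coefficient ($a_j^{(j)}$ and $b_j^{(j)}$) is nonzero for $\al,\beta\in[0,1)$.

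Next I would exploit that the sesquilinear form $[f,g]_n(\pm 1)$ depends on $g$ only through a one-sided limit, so it is linear in $g$ and may be computed by using any representative with the correct endpoint asymptotics. At $x=1$, linearity and the hypothesis give, for $j=0,\dots,m$,
\begin{align*}
0=[f,P_j^{(\al,\beta)}]_n(1)=\sum_{i=0}^j a_i^{(j)}\,[f,(1-x)^i]_n(1).
\end{align*}
This is a lower-triangular system with nonzero diagonal in the unknowns $[f,(1-x)^i]_n(1)$, so induction on $j$ yields $[f,(1-x)^i]_n(1)=0$ for every $i=0,\dots,m$; the same argument at $x=-1$ with the $(1+x)$-expansion yields $[f,(1+x)^i]_n(-1)=0$ for $i=0,\dots,m$.

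To finish, I would handle the two ``cross'' evaluations by expanding the test function around the opposite endpoint. Near $x=-1$ we have the binomial identity $(1-x)^m=\sum_{i=0}^m\binom{m}{i}2^{m-i}(-1)^i(1+x)^i$, so by linearity
\begin{align*}
[f,(1-x)^m]_n(-1)=\sum_{i=0}^m\binom{m}{i}2^{m-i}(-1)^i\,[f,(1+x)^i]_n(-1)=0
\end{align*}
by the step above; an entirely symmetric expansion of $(1+x)^m$ around $x=1$ takes care of $[f,(1+x)^m]_n(1)$. The main thing to be careful about is the bookkeeping for the non-triviality of the leading coefficients and the conjugation convention in the sesquilinear form (harmless here because all coefficients in sight are real), but there is no substantive analytic obstacle—the result is essentially a linear-algebra consequence of Theorem~\ref{t-njacobiminimal} plus the polynomial expansions of $P_j^{(\al,\beta)}$.
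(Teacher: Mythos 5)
Your proof is correct and follows essentially the same route as the paper's: expand $P_j^{(\al,\beta)}$ in powers of $(1-x)$ (resp.\ $(1+x)$) via equation \eqref{e-jacobihyper}, use linearity of $[\,\cdot\,,\cdot\,]_n$ in the second argument, and run the resulting triangular induction with nonzero leading coefficients. If anything, your treatment is slightly more complete, since the binomial expansion handles the cross-endpoint evaluations $[f,(1-x)^m]_n(-1)$ and $[f,(1+x)^m]_n(1)$ explicitly and uniformly in $m$, whereas the paper only writes out $[f,(1-x)^r]_n(1)$ (splitting off $m\geq n$ via Theorem \ref{t-njacobiminimal}) and leaves the rest to analogy.
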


\begin{proof}
If $m\geq n$, the conclusion is trivially implied by Theorem \ref{t-njacobiminimal}, so assume that $m<n$. Proceed by induction. The base case for $m=0$ is trivial:
\begin{align*}
     \left[f, P^{(\al,\beta)}_0\right]_n(\pm 1)=[f,1]_n(\pm 1)=[f,(1-x)^0]_n(\pm 1)=[f,(1+x)^0]_n(\pm 1)=0.   
\end{align*}
Assume the claim is true for $m=r-1$, and let $\left[f,P_j^{(\al,\beta)}\right]_n\bigg|_{-1}^1=0$ for all $j=0,\dots,r$. We will show $[f,(1-x)^r]_n(1)=0$ and the $[f,(1+x)^r]_n(-1)=0$ statement will follow analogously. The decomposition in equation \eqref{e-jacobihyper} still applies:
\begin{align*}
    0=\left[f,P_r^{(\al,\beta)}\right]_n(1)&=\sum_{i=0}^r a_i\left[f,(1-x)^i\right]_n(1) \\
&=a_r[f,(1-x)^r]_n(1)+a_{r-1}[f,(1-x)^{r-1}]_n(1)+\dots+a_0[f,1]_n(1) \\
&=[f,(1-x)^r]_n(1).
\end{align*}
The final equality follows by the inductive hypothesis, and the proposition is proven. 
\end{proof}

The extension of Theorem \ref{t-firstjacobiany} to higher order compositions of the Jacobi differential expression is now available. Let $\cM=\{m_1,\dots,m_n\}\subset\NN_0$, denote a set of $n$ indices. Define the domain 
\begin{align}
\cD_{{\bf J}^n_{(\al,\beta)}(\cM)}=\left\{ f\in\cD\ti{max}^{{\bf J},n}~:~\left[f,P\ci{m_j}^{(\al,\beta)}\right]_n(\pm 1)=0 \text{ for all }m_j\in\cM \right\},
\end{align}
where $P\ci{m_j}^{(\al,\beta)}$ is the $m_j$-th Jacobi polynomial. 

\begin{theo}\label{t-jacobifirstton}
The self-adjoint operators 
\begin{align*}
{{\bf J}^n_{(\al,\beta)}}=\left\{\ell^n_{\bf J},\cD_{{\bf J}^n_{(\al,\beta)}} \right\} 
\text{ and } 
{{\bf J}^n_{(\al,\beta)}(\cM)}=\left\{\ell^n_{\bf J},\cD_{{\bf J}^n_{(\al,\beta)}(\cM)} \right\} 
\end{align*}
are equal.
\end{theo}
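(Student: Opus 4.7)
The plan is to generalize the proof of Theorem \ref{t-firstjacobiany}, using the hypergeometric decomposition \eqref{e-jacobihyper} of Jacobi polynomials to reduce both inclusions to statements about the endpoint basis functions $(1-x)^k$ and $(1+x)^k$.

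For the forward inclusion $\cD_{{\bf J}^n_{(\al,\beta)}} \subset \cD_{{\bf J}^n_{(\al,\beta)}(\cM)}$, take $f$ satisfying $\left[f, P_j^{(\al,\beta)}\right]_n(\pm 1) = 0$ for $j = 0, \ldots, n-1$. Proposition \ref{t-stackingprop} converts these into $[f, (1-x)^j]_n(+1) = 0$ and $[f, (1+x)^j]_n(-1) = 0$ for the same range of $j$; Theorem \ref{t-njacobiminimal} then extends the vanishing to all $j \geq n$, since $(1-x)^j$ agrees with $\f_j^+ \in \cD\ti{min}^{{\bf J},n}$ near $x=1$ (and analogously at $-1$). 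Decomposing each $P_{m_j}^{(\al,\beta)}$ via \eqref{e-jacobihyper} as $\sum_k c_{m_j, k}(1-x)^k$ (with the analogous expansion in powers of $(1+x)$ near $-1$) and using linearity of the sesquilinear form then yields $[f, P_{m_j}]_n(\pm 1) = 0$ for every $m_j \in \cM$.

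For the reverse inclusion, the plan is to invoke the maximality property of self-adjoint operators: if $A$ is self-adjoint, $A \subset B$, and $B$ is symmetric, then $A = B$. Since $\cD_{{\bf J}^n_{(\al,\beta)}}$ is already self-adjoint (the left-definite domain of Lemma \ref{t-njacobiprop}) and is contained in $\cD_{{\bf J}^n_{(\al,\beta)}(\cM)}$ by the forward inclusion, it suffices to show that $\cD_{{\bf J}^n_{(\al,\beta)}(\cM)}$ defines a symmetric operator. Via GKN2 (Theorem \ref{t-gkn2}), this requires the Glazman symmetry $[P_{m_j}, P_{m_k}]_n(\pm 1) = 0$ together with linear independence of $\{P_{m_j}\}_{j=1}^n$ modulo $\cD\ti{min}^{{\bf J},n}$. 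The Glazman symmetry is immediate endpoint-by-endpoint: the Jacobi polynomials are smooth, while the coefficients $a_k(x) = (1-x)^{\al+k}(1+x)^{\beta+k}$ appearing in \eqref{e-njacobisesqui}, together with all their derivatives up to order $k-1$, vanish at $x = \pm 1$, so every term in the sesquilinear form evaluated on polynomials vanishes in the limit.

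The main obstacle is establishing the linear independence of $\{P_{m_j}\}$ modulo $\cD\ti{min}^{{\bf J},n}$ for an arbitrary set $\cM$. Via a matrix construction mirroring Theorem \ref{t-basislinind}, this reduces to showing that the $n \times n$ coefficient matrix $C = (c_{m_j, k})_{j, k}$ is invertible for every choice of $n$ distinct indices. The determinant of $C$ is anti-symmetric in $m_1, \ldots, m_n$ and therefore carries the Vandermonde factor $\prod_{i < j}(m_j - m_i)$; the remaining symmetric factor, computable from the explicit Pochhammer form of $c_{m, k}$ in \eqref{e-jacobihyper}, is shown to be nonvanishing on $\NN_0^n$ using $\al, \beta \geq 0$ and the positivity of the Pochhammer products $(m+\al+\beta+1)_k$ and $(\al+k+1)_{m-k}$. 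Once this invertibility is secured, perhaps extracted as a standalone lemma, the maximality argument completes the proof.
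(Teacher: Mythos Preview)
Your forward inclusion argument coincides with the paper's: both use Proposition~\ref{t-stackingprop} to pass from $\left[f,P_j^{(\al,\beta)}\right]_n(\pm 1)=0$ for $j<n$ to $[f,(1-x)^j]_n(\pm 1)=0$, then invoke Theorem~\ref{t-njacobiminimal} for $j\ge n$, and finally expand each $P_{m_j}^{(\al,\beta)}$ via \eqref{e-jacobihyper}.

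For the reverse inclusion the two arguments diverge. The paper does not attempt to verify linear independence of $\{P_{m_j}\}_{m_j\in\cM}$ modulo $\cD\ti{min}^{{\bf J},n}$ directly. Instead it argues by dimension: the $2n$ conditions defining $\cD_{{\bf J}^n_{(\al,\beta)}(\cM)}$ constrain at most $2n$ directions, while the already-established inclusion $\cD_{{\bf J}^n_{(\al,\beta)}}\subset\cD_{{\bf J}^n_{(\al,\beta)}(\cM)}$ together with Corollary~\ref{t-jacobilinind} forces the quotient over $\cD\ti{min}^{{\bf J},n}$ to have dimension at least $2n$; equality of dimensions then yields equality of domains. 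This sidesteps any explicit determinant computation, and in fact the linear independence of the $P_{m_j}$ modulo the minimal domain falls out as a \emph{consequence} rather than a hypothesis.

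Your route through GKN2 and maximality is sound in outline, but the Vandermonde sketch has a genuine gap. The coefficients $c_{m,k}$ from \eqref{e-jacobihyper} are \emph{not} polynomial in $m$ (the factor $(\al+k+1)_{m-k}/(m-k)!$ is a generalized binomial), so anti-symmetry of $\det C$ in the $m_j$ does not by itself produce a Vandermonde factor in the usual way. A correct execution is to first factor the nonzero quantity $(\al+1)_{m_j}/m_j!$ out of row $j$, after which the $(j,k)$ entry becomes (up to a nonzero column constant)
\[
\frac{m_j!}{(m_j-k)!}\,(m_j+\al+\beta+1)_k
=\prod_{i=0}^{k-1}(m_j-i)(m_j+\al+\beta+1+i)
=\prod_{i=0}^{k-1}\bigl(\la_{m_j}-\la_i\bigr),
\]
where $\la_m=m(m+\al+\beta+1)$ are the Jacobi eigenvalues. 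Column $k$ is now a monic polynomial of degree $k$ in $\la_{m_j}$, so column operations reduce $C$ to the Vandermonde matrix in the $\la_{m_j}$. Since $\al+\beta+1>0$ makes $m\mapsto\la_m$ strictly increasing on $\NN_0$, distinct $m_j$ give distinct $\la_{m_j}$ and the determinant is nonzero. With this correction your argument goes through; its payoff over the paper's is an explicit, reusable formula for the determinant, at the cost of a longer computation the paper avoids entirely.
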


\begin{proof}
The definition in equation \eqref{e-jacobihyper} says $P_m^{(\al,\beta)}(x)$, $m\in\NN_0$, can be decomposed as
\begin{align*}
P_m^{(\al,\beta)}(x)=\sum_{j=0}^m a_j(1-x)^j
\end{align*}
for some constants $a_j\in\RR$. Let $m_i\in\cM$ and $f\in\cD_{{\bf J}^n_{(\al,\beta)}}$. If $m_i<n$ the definition of $\cD_{{\bf J}^n_{(\al,\beta)}}$ immediately implies 
\begin{align*}
    \left[f,P_{m_i}^{(\al,\beta)}\right]_n\bigg|_{-1}^1=0.
\end{align*}
Let $m_i\geq n$. Then,
\begin{align}\label{e-njacobipolysum+}
\left[f,P_{m_i}^{(\al,\beta)}\right]_n&(1)=\sum_{j=0}^m a_j\left[f,(1-x)^j\right]_n(1) \\
&=a_{n-1}\left[f,(1-x)^{n-1}\right]_n(1)+\dots +a_0[f,1]_n(1).\nonumber
\end{align}
Theorem \ref{t-njacobiminimal}, which says that $(1-x)^j$, $j\geq n$ is in the minimal domain, eliminate most terms. The $n$ remaining terms evaluate to zero, as $\cD_{{\bf J}^n_{(\al,\beta)}}$ has $n$ relevant boundary conditions via Proposition \ref{t-stackingprop}. As $m_i\in\cM$ was arbitrary, we conclude
\begin{align*}
    \left[f,P_{m_i}^{(\al,\beta)}\right]_n(1)=0,
\end{align*}
for all $m_i\in\cM$. The endpoint $-1$ follows analogously and $f\in\cD_{{\bf J}^n_{(\al,\beta)}(\cM)}$.

Now, assume that $f\in\cD_{{\bf J}^n_{(\al,\beta)}(\cM)}$ does not imply $f\in\cD_{{\bf J}^n_{(\al,\beta)}}$. The presence of $2n$ GKN boundary conditions in the definition of $\cD_{{\bf J}^n_{(\al,\beta)}(\cM)}$ means that the minimal domain is extended by a maximum of $2n$ dimensions. This maximum is achieved if and only if the boundary condition functions are all linearly independent modulo the minimal domain by the GKN2 Theorem \ref{t-gkn2}. However, the inclusion $\cD_{{\bf J}^n_{(\al,\beta)}}\subset\cD_{{\bf J}^n_{(\al,\beta)}(\cM)}$ implies that the minimal domain has been extended by at least $2n$ dimensions by Corollary \ref{t-jacobilinind}. We conclude that the domains must be equal, and hence the operators ${{\bf J}^n_{(\al,\beta)}}$ and ${{\bf J}^n_{(\al,\beta)}(\cM)}$ are equal.
\end{proof}

\begin{rem}\label{r-laguerre}
All results from this section should be easily adapted to the the left-definite spaces associated with the Laguerre operator with parameter $\al> -1$ and $\al^2\neq 1/2$. The Laguerre polynomials lie in $(0,\infty)$ and so their decomposition, in analogy to equation \eqref{e-jacobihyper}, is with respect to the monomials. The main difference is that the Laguerre differential expression normally has deficiency indices $(n,n)$. Thus, we write $n$ GKN boundary conditions and the condition of each sesquilinear form at infinity will be trivially satisfied. The functions $x^j_+$ and $x^{-\al+j}_+$ (the $+$ denoting that the monomial behavior is only near the endpoint 0) should belong to the minimal domain for $j\in\NN_0$ such that $j\geq n$. More information on powers of the Laguerre operator can be found in \cite[Section 12]{LW02}.
\end{rem}

\section{Equality of Left-Definite Domains}\label{s-equivalence}

Left-definite domains have appeared in Sections \ref{s-mindomains} and \ref{s-powers}, but mainly as a comparison to the characterizations of the maximal domain, see Lemmas \ref{t-AC}, \ref{t-stronglimitpoint} and \ref{t-njacobiprop}. However, if a Sturm--Liouville operator contains a complete set of eigenfunctions, Theorem \ref{t-leftdefortho} says that a simple identifying feature of left-definite domains associated with powers of the Sturm--Liouville operator is the inclusion of this set of eigenfunctions. Thus, left-definite domains are often the {\em nicest} self-adjoint extensions to work with, and are of particular importance.

We now let ${\bf L}^n$ be a self-adjoint operator defined by left-definite theory on $L^2[(a,b),w]$ with domain $\cD_{\bf L}^n$ that includes a complete system of orthogonal eigenfunctions. Enumerate the orthogonal eigenfunctions as $\{P_k\}_{k=0}^{\infty}$. Let ${\bf L}^n$ operate on its domain via $\ell^n[\fdot]$, a differential operator of order $2n$, $n\in\NN$, generated by composing a Sturm--Liouville differential operator with itself $n$ times. Furthermore, let ${\bf L}^n$ be an extension of the minimal operator ${\bf L}^n\ti{min}$ that has deficiency indices $(n,n)$, and the associated maximal domain be denoted by $\cD\ti{max}^n$.

Also, let $\cM=\{m_1,\dots,m_n\}\subset\NN_0$ denote a set of $n$ indices. Define
\begin{align*}
    \widetilde{\cC}_n(\cM):=\left\{f\in \cD\ti{max}^n~:~\left[f,P\ci{m_j}\right]_n\bigg|_{-1}^1=0 \text{ for all }m_j\in\cM \right\}.
\end{align*}
This allows us to compare several different possible characterizations of the left-definite domain:
\begin{align*} 
\cA_n&:=\left\{f\in \cD\ti{max}^n~:~f,f',\dots,f^{(2n-1)}\in AC\ti{loc}(a,b);
(p(x))^n f^{(2n)}\in L^2[(a,b),w]\right\},
\\
\cB_n&:=\left\{f\in \cD\ti{max}^n~:~
[f,P_j]_n\Big|_a^b=0 \text{ for }j=0,1,\dots,n-1\right\}, 
\\
\widetilde{\cC}_n&:=\bigcup_{\cM}\widetilde{\cC}_n(\cM)=\left\{f\in \cD\ti{max}^n~:~
[f,P_j]_n\Big|_a^b=0 \text{ for any }n \text{ distinct }j\in\NN \right\}, \text{ and}
\\
\cF_n&:=\left\{f\in \cD\ti{max}^n~:~\left[a_j(x)
f^{(j)}(x)\right]^{(j-1)}\Big|_a^b=0 \text{ for }j=1,2,\dots,n
\right\}.
\end{align*}

The $p(x)$ above is from the standard definition of a Sturm-Liouville differential operator, given in equation \eqref{d-sturmop}, and the $a_j(x)$'s are from the Lagrangian symmetric form of the operator in \eqref{e-lagrangian}. The union in the definition of $\widetilde{\cC}_n$ is taken over all sets of indices $\cM$ that are of size $n$, while the tilde notation is used to distinguish the domain from that of $\cC_n$ in \cite[Section 6]{FFL}. In the current notation, $\cC_n=\widetilde{\cC}_n(\cM)$ for a single unknown $\cM$. Hence, the inclusion $\cB_n\subset\widetilde{\cC}_n$ is not obvious, and would imply that if the first $n$ eigenfunctions can be used as boundary conditions for a domain, then any $n$ would suffice. The opposite inclusion $\widetilde{\cC}_n\subset\cB_n$ is thus also not trivial. 

These domains seem very different, yet progress has already been made on the equality of these domains in this manuscript and elsewhere. There is a proof of $\cA_n\subseteq \cF_n$ in \cite{LW15, LWOG} for the special case where the differential operator $\ell^n[\fdot]$ denotes the $n^{th}$ composite power of the Legendre differential expression and the eigenfunctions $\{P_k\}_{k=0}^{\infty}$ are the Legendre polynomials. This scenario for small $n$ was discussed in \cite[Section 4]{FFL}. The conditions in $\cF_n$ are particularly notable, as they represent easily testable conditions that are not in the GKN format. 

The $n$th left-definite domain, $\cD_{\bf L}^n$ is found to be equal to $\cA_n$ for the Jacobi differential operator in {\cite[Corollary 5.1]{EKLWY}} and the Laguerre differential operator in {\cite[Corollary 12.9]{LW02}}. Indeed, the main condition of $\cA_n$ simply involves the term associated with $f^{(2n)}$ when $\ell^n[f]$ is decomposed into a sum of derivatives of $f$. We restrict our attention to these operators so the equality $\cD_{\bf L}^n=\cA_n$ holds. Significant progress on the equality of the domains was achieved in \cite[Section 6]{FFL}, where the following theorem was proven.

\begin{theo}\label{t-oldtheo}
Let ${\bf L}^n$ be a self-adjoint operator defined by left-definite theory on $L^2[(a,b),w]$ with the left-definite domain $\cD_{\bf L}^n$. Let ${\bf L}^n$ operate on its domain via $\ell^n[\fdot]$, a classical Jacobi or Laguerre differential expression, with $\al,\beta>-1$ or $\al>-1$ respectively, of order $2n$, where $n\in\NN$, generated by composing the Sturm--Liouville operator with itself $n$ times. Furthermore, let ${\bf L}^n$ be an extension of the minimal operator ${\bf L}^n\ti{min}$, which has deficiency indices $(n,n)$. Assume $\cA_n=\cB_n$ and that $f\in \cF_n$ implies that $f'',\dots,f^{(2n-2)}\in L^2[(a,b),dx]$. Then $\cD_{\bf L}^n=\cA_n=\cB_n=\widetilde{\cC}_n(\cM)=\cF_n$, $\forall n\in\NN$.
\end{theo}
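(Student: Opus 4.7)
The plan is to establish the chain of equalities by leveraging external references, the stated hypotheses, and the structural results of this paper. First, the equality $\cD_{\bf L}^n=\cA_n$ is \cite[Corollary 5.1]{EKLWY} in the Jacobi case and \cite[Corollary 12.9]{LW02} in the Laguerre case; combined with the hypothesis $\cA_n=\cB_n$, this gives $\cD_{\bf L}^n=\cA_n=\cB_n$ immediately. Two nontrivial equalities then remain: $\cB_n=\widetilde{\cC}_n(\cM)$ and $\cB_n=\cF_n$.

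For $\cB_n=\widetilde{\cC}_n(\cM)$, I would mimic the argument of Theorem \ref{t-jacobifirstton} verbatim, adapted to the Laguerre case via Remark \ref{r-laguerre} when needed. For the forward inclusion, fix $m_i\in\cM$ and decompose $P_{m_i}$ in powers of $(1-x)$ (respectively $(1+x)$) at each endpoint. Terms of degree $\geq n$ contribute zero to $[f,P_{m_i}]_n\big|_a^b$ because the corresponding basis functions lie in $\cD\ti{min}^n$ by Theorem \ref{t-njacobiminimal}; the remaining $n$ terms of degree strictly below $n$ vanish by Proposition \ref{t-stackingprop} combined with the definition of $\cB_n$. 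The reverse inclusion follows by dimension counting: both $\cB_n$ and $\widetilde{\cC}_n(\cM)$ extend $\cD\ti{min}^n$ by at most $n$ dimensions via GKN2, and Corollary \ref{t-jacobilinind} together with the matrix construction in Theorem \ref{t-basislinind} shows each extension is exactly $n$-dimensional, forcing equality.

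The delicate step is $\cB_n=\cF_n$. For $\cF_n\subseteq\cB_n$, given $f\in\cF_n$, I would substitute $g=(1-x)^m$ (and $g=(1+x)^m$) for $m=0,\dots,n-1$ into the sesquilinear-form expression \eqref{e-njacobisesqui}. The derivatives $\bar g^{(k)}$ truncate the double sum at $k\leq m<n$, and each surviving term can be regrouped via the Leibniz rule into a linear combination of the specific boundary terms $[a_j(x)f^{(j)}(x)]^{(j-1)}\big|_a^b$ that vanish by definition of $\cF_n$, plus lower-order derivative terms controlled by the regularity hypothesis (ii). This yields $[f,(1-x)^m]_n\big|_a^b=0$, which Proposition \ref{t-stackingprop} converts into $[f,P_m]_n\big|_a^b=0$, placing $f$ in $\cB_n$. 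For $\cB_n\subseteq\cF_n$, I would argue inductively in descending order on $j$. The highest-order boundary term $[a_n(x)f^{(n)}(x)]^{(n-1)}\big|_a^b$ is isolated by using $g=(1-x)^{n-1}$ in \eqref{e-njacobisesqui}, where only the summand with $k=j=n$ produces a top-order derivative of $(1-x)^{n-1}$ of order $n-1$; all other summands are either identically zero or already known to vanish via hypothesis (ii) and the maximal-domain regularity from Theorem \ref{t-maxdomaindecomp}. Successively smaller $m$ then peels off the remaining conditions $[a_j(x)f^{(j)}(x)]^{(j-1)}\big|_a^b=0$ for $j=n-1,\dots,1$.

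The main obstacle is the $\cB_n\subseteq\cF_n$ direction. The difficulty is separating the individual conditions of $\cF_n$ from the bundled sesquilinear conditions defining $\cB_n$, since the formula \eqref{e-njacobisesqui} produces a tangle of derivative-products in which the target boundary terms $[a_j(x)f^{(j)}(x)]^{(j-1)}\big|_a^b$ appear mixed with cross-terms of the form $[a_k(x)\bar g^{(k)}(x)]^{(k-j)}f^{(j-1)}(x)$. The regularity hypothesis (ii) is precisely what makes this disentangling feasible: it guarantees that every individual summand in the expansion admits an independent finite limit at the endpoints, rather than only their aggregate sum being finite, so that induction on the derivative index can isolate one $\cF_n$-condition at a time. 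Without this upgrade, one could only conclude collective vanishing of the full sesquilinear forms, not the pointwise vanishing of each boundary piece.
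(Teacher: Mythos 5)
First, a point of orientation: this paper never proves Theorem \ref{t-oldtheo} itself --- it is imported verbatim from \cite[Section 6]{FFL}, and the paper's own argument for the related Theorem \ref{t-newtheo} obtains $\cB_n\subset\cF_n$ by citing \cite[Theorem 6.2]{FFL} and then gets the reverse inclusion by a dimension count (as at the end of Theorem \ref{t-jacobifirstton}), never by isolating individual $\cF_n$ boundary terms. Your first two steps are fine and essentially reproduce that structure ($\cD_{\bf L}^n=\cA_n$ from the references, the hypothesis $\cA_n=\cB_n$, and the Theorem \ref{t-jacobifirstton}-style argument for $\widetilde{\cC}_n(\cM)$, where the dimension count is legitimate because you first prove the inclusion $\cB_n\subseteq\widetilde{\cC}_n(\cM)$). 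But your step for $\cB_n=\cF_n$ contains a genuine gap. Plugging $g=(1-x)^{n-1}$ into \eqref{e-njacobisesqui} does \emph{not} isolate $\left[a_n f^{(n)}\right]^{(n-1)}\big|_a^b$: the $(k,j)=(n,n)$ summand is $a_nf^{(n)}$ times the constant $\bar g^{(n-1)}$, while the summand that actually contains $\left[a_nf^{(n)}\right]^{(n-1)}$ is the $(k,j)=(n,1)$ one, which is multiplied by $\bar g=(1-x)^{n-1}$ and hence killed at the endpoint by the finiteness in Theorem \ref{t-maxdomaindecomp}. In fact one computes $[f,(1-x)^m]_n(1)\approx\sum_{k=m+1}^n(\pm 1)\lim\left[a_kf^{(k)}\right]^{(k-m-1)}(1)$, so the monomial conditions and the $\cF_n$ conditions are two different families of endpoint limits; translating one into the other is precisely the nontrivial content of \cite[Theorem 6.2]{FFL} together with the $L^2$ hypothesis, and your ``regrouping via Leibniz'' does not supply it. The same issue afflicts the direction $\cF_n\subseteq\cB_n$: vanishing of $\left[a_kf^{(k)}\right]^{(k-1)}$ at the endpoints only forces the lower-order quantities $\left[a_kf^{(k)}\right]^{(k-m-1)}$ to have \emph{finite} limits (by integrating), not zero ones, so $[f,(1-x)^m]_n\big|_a^b=0$ does not follow as sketched.

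Two secondary points. The statement covers the Laguerre expression ($\al>-1$) and Jacobi with $\al,\beta>-1$, whereas every result you invoke (Theorems \ref{t-maxdomaindecomp}, \ref{t-njacobiminimal}, \ref{t-basislinind}, Proposition \ref{t-stackingprop}, Corollary \ref{t-jacobilinind}, Theorem \ref{t-jacobifirstton}) is proved here only for Jacobi with $0\le\al,\beta<1$; Remark \ref{r-laguerre} is an unproven remark, so ``adapted via Remark \ref{r-laguerre}'' leaves the Laguerre case (and the wider Jacobi parameter range) unproved. Finally, Proposition \ref{t-stackingprop} is stated in the direction ``polynomial conditions imply monomial conditions''; the implication you need in your $\cF_n\subseteq\cB_n$ step is the easy converse, which follows from the decomposition \eqref{e-jacobihyper}, linearity, and Theorem \ref{t-njacobiminimal}, and should be cited that way. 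A cleaner repair of your third step is to follow the paper: prove (or cite) the single inclusion $\cB_n\subseteq\cF_n$ and then recover $\cF_n\subseteq\cB_n$ by the same dimension-counting argument you already used for $\widetilde{\cC}_n(\cM)$.
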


The decompositions, techniques and results in this paper offer significant improvements to the theorem. However, in some cases the methods from \cite{FFL} are sufficient, so we avoid reproving these facts for the sake of brevity. The improved theorem, and confirmation of \cite[Conjecture 6.1]{FFL} for the Jacobi differential equation now follows.

\begin{theo}\label{t-newtheo}
Let ${\bf L}^n$, $n\in\NN$, be a self-adjoint operator defined by left-definite theory on $L^2[(a,b),w]$ with the left-definite domain $\cD_{\bf L}^n$. Let ${\bf L}^n$ operate on its domain via $\ell^n[\fdot]$, a classical Jacobi differential expression of order $2n$, with parameters $\al,\beta>0$, generated by composing the Sturm--Liouville operator with itself $n$ times. Furthermore, let ${\bf L}^n$ be an extension of the minimal operator ${\bf L}^n\ti{min}$, which has deficiency indices $(2n,2n)$. Then $\cD_{\bf L}^n=\cA_n=\cB_n=\widetilde{\cC}_n=\cF_n$, $\forall n\in\NN$.
\end{theo}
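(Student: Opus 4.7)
The plan is to route every domain through the left-definite domain $\cD_{\bf L}^n$, using the defect-space basis from Section \ref{s-powers} to handle the non-GKN characterization $\cF_n$. The equality $\cA_n=\cD_{\bf L}^n$ is Corollary 5.1 of \cite{EKLWY}, so I take it as the starting point. For $\cB_n=\cD_{\bf L}^n$, I would read the condition $[f,P_j^{(\al,\beta)}]_n\big|_{-1}^{1}=0$ as vanishing at each endpoint separately, identifying $\cB_n$ with $\cD_{{\bf J}^n_{(\al,\beta)}}$ of Theorem \ref{t-jacobifirstton}. GKN2 (Theorem \ref{t-gkn2}) then applies: the Glazman symmetry conditions reduce via Green's formula \eqref{e-greens} and orthogonality to $[P_j,P_k]_n\big|_{-1}^{1}=(\lambda_j^n-\lambda_k^n)\langle P_j,P_k\rangle_{L^2_{\al,\beta}}=0$, while linear independence modulo $\cD\ti{min}^{{\bf J},n}$ is Corollary \ref{t-jacobilinind}. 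Since $\cB_n$ then is a self-adjoint extension containing every Jacobi polynomial (using Theorem \ref{t-njacobiminimal} for the indices $k\geq n$), the spectral characterization of Theorem \ref{t-leftdefortho} forces $\cB_n=\cD_{\bf L}^n$. The chain $\widetilde{\cC}_n=\cB_n$ is then immediate from Theorem \ref{t-jacobifirstton}, which gives $\widetilde{\cC}_n(\cM)=\cB_n$ for every index set $\cM$ of cardinality $n$.

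The principal obstacle is $\cF_n=\cD_{\bf L}^n$, since the $\cF_n$ conditions are not of GKN type. I would invoke Corollary \ref{c-defectspacesbasis} to write every $f\in\cD\ti{max}^{{\bf J},n}$ as $f=f_{\min}+\sum_{j=0}^{n-1}(c_j^{+}\f_j^{+}+d_j^{+}\psi_j^{+}+c_j^{-}\f_j^{-}+d_j^{-}\psi_j^{-})$, with $f_{\min}\in\cD\ti{min}^{{\bf J},n}$. Since $f_{\min}$ annihilates every endpoint term, and $\f_r^{-},\psi_r^{-}$ vanish near $x=1$, only $\f_r^{+}$ and $\psi_r^{+}$ contribute to the $\cF_n$ conditions at $+1$. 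A direct computation gives $a_j(x)(\f_r^{+})^{(j)}(x)\sim C(1-x)^{\al+r}(1+x)^{\beta+j}$, whose $(j-1)$-st derivative at $x=1$ vanishes termwise because $\al>0$. For $\psi_r^{+}$, the weight $(1-x)^{\al+j}$ in $a_j$ cancels the $(1-x)^{-\al}$ factor, so $a_j(\psi_r^{+})^{(j)}\sim C(1-x)^{r}(1+x)^{\beta+j}$; the only term in the Leibniz expansion of $[\,\cdot\,]^{(j-1)}$ with a nonzero finite limit at $x=1$ corresponds to $i=j-1-r$, which forces $r\leq j-1$. Hence the $n\times n$ matrix
\[
M_{j,r}:=\bigl[a_j(x)(\psi_r^{+})^{(j)}(x)\bigr]^{(j-1)}(1),\qquad j\in\{1,\dots,n\},\ r\in\{0,\dots,n-1\},
\]
is lower triangular with nonzero diagonal entries, the Pochhammer prefactor from $(\psi_r^{+})^{(j)}$ being nonzero because $\al\notin\ZZ$. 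Invertibility of $M$ forces every $d_j^{+}=0$ whenever $f\in\cF_n$; the symmetric argument at $x=-1$ kills every $d_j^{-}$. Therefore $\cF_n=\cD\ti{min}^{{\bf J},n}\dotplus\mathrm{span}\{\f_j^{\pm}:0\leq j\leq n-1\}$, and the Jacobi expansion \eqref{e-jacobihyper} shows this is also the $2n$-dimensional quotient subspace spanned, modulo $\cD\ti{min}^{{\bf J},n}$, by $\{P_j^{(\al,\beta)}\}_{j=0}^{n-1}$, so $\cF_n=\cB_n=\cD_{\bf L}^n$.

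The hardest part is the $\cF_n$ computation, where one must simultaneously track the lower-triangular structure of $M$, the exact cancellation of the $(1-x)^{\al}$ weight in $a_j(\psi_r^{+})^{(j)}$, and the complete vanishing of all $\f$-contributions. The strict positivity hypothesis $\al,\beta>0$ in Theorem \ref{t-newtheo} is essential throughout: it distinguishes $\psi_0^{\pm}$ from $\f_0^{\pm}$ so that the full $4n$-dimensional defect decomposition is available, and it guarantees that every boundary limit generated by the $\f_r^{\pm}$-block decays to zero after differentiation.
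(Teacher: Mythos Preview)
Your treatment of $\cA_n$, $\cB_n$, and $\widetilde{\cC}_n$ is essentially the paper's argument with the external citations unpacked, and it is correct. The paper also routes $\cA_n=\cB_n$ through Corollary~\ref{t-jacobilinind} together with the GKN framework of \cite[Theorem~3.2]{FFL}, and obtains $\cB_n=\widetilde{\cC}_n$ from Theorem~\ref{t-jacobifirstton}; your explicit verification of the Glazman conditions via Green's formula and the spectral-theorem uniqueness argument make the same points more directly.

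The genuine gap is in your handling of $\cF_n$. You assert that ``$f_{\min}$ annihilates every endpoint term,'' i.e.\ that $[a_j f^{(j)}]^{(j-1)}(\pm 1)=0$ for every $f\in\cD\ti{min}^{{\bf J},n}$. This is equivalent to $\cD\ti{min}^{{\bf J},n}\subset\cF_n$, and it is \emph{not} a consequence of the defect-space decomposition: the functionals $\Phi_j(f):=[a_j f^{(j)}]^{(j-1)}(1)$ are defined on all of $\cD\ti{max}^{{\bf J},n}$ by Theorem~\ref{t-maxdomaindecomp}, but nothing in your argument shows they factor through the quotient $\cD\ti{max}^{{\bf J},n}/\cD\ti{min}^{{\bf J},n}$. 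Knowing that $[f_{\min},g]_n(\pm 1)=0$ for every $g$ gives $2n$ linear relations at each endpoint, but these relations involve \emph{all} of the quantities $[a_k f^{(k)}]^{(k-j)}(\pm 1)$ and $(1\mp x)^{\al+j}f^{(j)}(\pm 1)$ appearing in the sesquilinear form, not just the $n$ terms $\Phi_1,\dots,\Phi_n$; extracting the latter from the former is precisely the content the paper imports from \cite[Theorem~6.2]{FFL} as the inclusion $\cB_n\subset\cF_n$. Without that input (or an independent proof that the $\cA_n$-regularity of Lemma~\ref{t-njacobiprop} forces $\Phi_j(f)=0$), your triangular-matrix computation on $\{\psi_r^+\}$ only shows the $\cF_n$-functionals are linearly independent on the defect space---useful for the codimension count, but not for identifying $\ker\Phi$ with $\cD\ti{min}^{{\bf J},n}\dotplus\mathrm{span}\{\f_j^{\pm}\}$.

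A secondary slip: the span of $\{P_j^{(\al,\beta)}\}_{j=0}^{n-1}$ modulo $\cD\ti{min}^{{\bf J},n}$ is $n$-dimensional (Corollary~\ref{t-jacobilinind}), not $2n$-dimensional; each polynomial contributes simultaneously at both endpoints. The identification you need is rather that $\f_r^{\pm}\in\cB_n$ (which follows from Lemma~\ref{l-upperleft} and the expansion \eqref{e-jacobihyper}), whence $\cD\ti{min}^{{\bf J},n}\dotplus\mathrm{span}\{\f_j^{\pm}\}\subset\cB_n$, and equality by dimension. Once you supply $\cB_n\subset\cF_n$ from \cite{FFL}, your $\psi$-block computation gives a cleaner, more explicit version of the paper's dimension argument for the reverse inclusion.
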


\begin{proof}
The comments prior to Theorem \ref{t-oldtheo} mention that $\cD_{{\bf L}}^n=\cA_n$ for the Jacobi left-definite domains. The equality $\cA_n=\cB_n$ was actually shown in Corollary \ref{t-jacobilinind}, due to the framework from \cite[Theorem 3.2]{FFL}. In particular, the fact that $B_n$ includes all of the orthogonal eigenfunctions and satisfies the Glazman symmetry conditions of Theorem \ref{t-gkn1} is guaranteed by the matrix construction of \cite[Section 3]{FFL}. The equality $\cB_n=\widetilde{\cC}_n$ is already proven in Theorem \ref{t-jacobifirstton} as $\cM$ was arbitrary. Finally, $\cB_n\subset\cF_n$ was proven in \cite[Theorem 6.2]{FFL}. The reverse inclusion, $\cF_n\subset\cB_n$ then follows immediately by a dimension argument similar to that at the end of the proof of Theorem \ref{t-jacobifirstton}. The distinction between boundary conditions with limits and GKN boundary conditions is unimportant to self-adjoint extension theory, only that the minimal domain can be extended by a maximum number of dimensions which is equal to the number of boundary conditions imposed.  
\end{proof}
    
It should be mentioned that alternative proofs of some inclusions between the spaces are readily available. For instance, if $\cA_n=\cF_n$, it is possible to show $\cF_n\subset\cB_n$ using the induction arguments from Theorem \ref{t-maxdomaindecomp} in an opposite fashion. The Theorem also answers a conjecture concerning left-definite domains of the Legendre operator from \cite{LWOG}.

The main advancements of Theorem \ref{t-newtheo} are the expansion of $\widetilde{\cC}_n(\cM)$ to the larger class $\widetilde{\cC}_n$, and the removal of $L^2$ restrictions on functions from $\cF_n$. Thus, explicit boundary conditions of both GKN and the non-GKN variety have been shown to define the left-definite domains associated with powers of some classical Sturm--Liouville differential operators. 

We conclude our discussion of left-definite domains by mentioning a peculiar perspective on the domain $\cF_n$. In particular, it is easy to see upon examination that 
\begin{align*}
    \cF_n=\left\{f\in\cD\ti{max}^n ~:~ [f,1]_j\Big|_a^b=0 \text{ for } j=1,\dots,n\right\}.
\end{align*}
Indeed, the proof of Theorem \ref{t-maxdomaindecomp} analyzes such limits and evaluations of the sesquilinear form in different spaces. Such a representation is clearly not covered in the classical texts on GKN theory \cite{AG,N} and hence represent non-GKN boundary conditions, despite this description using sesquilinear forms. It is thus the only domain of the four considered that explicitly builds upon itself as the number of compositions is increased. As a comparison, $\cB_n$ reuses functions when $n$ is increased to $n+1$, but the calculations required to verify the conditions are quite different because they use the $n+1$ sesquilinear form. In the $\cF$ domains, the boundary conditions from $\cF_n$ are all included verbatim in $\cF_{n+1}$. It is also clear that the function $1$ is unique in its ability to define such domains, as no other eigenfunction is available when $n=1$ by Corollary \ref{t-jacobi1}. However, if other operators are considered, such a structure could possibly be generated by other functions.

\subsection*{Acknowledgements}

The authors would like to thank Lance Littlejohn for useful discussions about domains of the Legendre and Jacobi operators.


\end{document}